\pgfplotsset{compat=1.13}
\theoremstyle{plain}
\newtheorem{thrm}{Theorem}[section]
\newtheorem{lmm}[thrm]{Lemma}
\newtheorem{crllr}[thrm]{Corollary}
\newtheorem{prpstn}[thrm]{Proposition}
\theoremstyle{definition}
\newtheorem{dfntn}[thrm]{Definition}
\newtheorem{xmpl}[thrm]{Example}
\newtheorem{rmrk}[thrm]{Remark}
\theoremstyle{plain}
\newcommand{\Id}{\mathds{1}}
\newcommand{\Ide}{\text{Id}}
\newcommand{\R}{\mathbb{R}}
\newcommand{\N}{\mathbb{N}}
\newcommand{\BW}{\mathbb{R}_{+, \text{sym}}^{d\times d}}
\newcommand{\BWt}{\mathbb{R}_{+, \text{sym}}^{2\times 2}}
\newcommand{\BWd}{\mathbb{R}_{+, \text{dia}}^{d\times d}}
\DeclareMathOperator*{\argmin}{\operatorname{argmin}}
\newcommand{\tr}{\operatorname{tr}}
\newcommand{\diag}{\operatorname{diag}}
\renewcommand{\d}{\,\mathrm{d}}
\newcommand{\bigO}{\mathcal{O}}
\newcommand{\Wass}{\mathcal{W}}
\newcommand{\WassEps}{\mathbf{W}_\epsilon}
\newcommand{\Prob}{\mathcal{P}}
\newcommand{\ProbW}{\mathcal{P}_2}
\newcommand{\ProbAC}{\mathcal{P}_2^{ac}}
\newcommand{\ProbWGd}{\mathcal{P}^{G,d}_2}
\newcommand{\std}{\operatorname{std}}
\newcommand{\cov}{\operatorname{cov}}
\newcommand{\avg}{\operatorname{mean}}
\newcommand{\flowmap}{\mathcal{T}}
\newcommand{\domain}{\Omega}
\newcommand{\PathEnergy}{\mathbf{E}}
\newcommand{\PathEnergyt}{\hat{\mathbf{E}}}
\newcommand{\SplineEnergy}{\mathbf{F}}
\newcommand{\SplineEnergyt}{\hat{\mathbf{F}}}
\newcommand{\SplineEnergyG}{\mathbf{F}_G}
\newcommand{\pathenergy}{\mathcal{E}}
\newcommand{\pathenergyt}{\hat{\mathcal{E}}}
\newcommand{\splineenergy}{\mathcal{F}}
\newcommand{\splineenergyt}{\hat{\mathcal{F}}}
\newcommand{\discreteDomain}{\Omega_{\scriptscriptstyle{MN}}}
\newcommand{\Bary}{\textup{Bar}}
\newcommand{\BaryEps}{\mathbf{Bar}^\epsilon}
\definecolor{blue}{rgb}{0.137255,0,0.862745}
\definecolor{red}{rgb}{0.862745,0.137255,0}
\definecolor{myOrange}{RGB}{255, 169, 87}
\definecolor{myGreen}{RGB}{180, 255, 162}
\definecolor{myGrey}{RGB}{187, 187, 187}
\definecolor{myDarkGrey}{RGB}{100, ,100, 100}
\DeclareRobustCommand\onedot{\futurelet\@let@token\@onedot}
\def\@onedot{\ifx\@let@token.\else.\null\fi\xspace}
\def\etal{\emph{et al}\onedot}
\numberwithin{equation}{section}
\newcommand*\samethanks[1][\value{footnote}]{\footnotemark[#1]}
\begin{document}

\title{Approximation of Splines in Wasserstein Spaces}
\author[,1]{Jorge Justiniano\thanks{J.J. and M.R. are funded by the Deutsche Forschungsgemeinschaft (DFG, German Research Foundation) – Project-ID 211504053 – SFB 1060}} \author[,1]{Martin Rumpf\samethanks}
\author[,2]{Matthias Erbar\thanks{M.E. is funded by the Deutsche Forschungsgemeinschaft (DFG, German Research Foundation) – Project-ID 317210226 – SFB 1283}}

\affil[1]{Institute for Numerical Simulation, University of Bonn, Endenicher Allee 60, 53115 Bonn, Germany \authorcr
  \tt jorge.justiniano@ins.uni-bonn.de, martin.rumpf@ins.uni-bonn.de}
\affil[2]{Fakult\"at f\"ur Mathematik, Universit\"at Bielefeld,
              Postfach 100131, 33501 Bielefeld, Germany \authorcr
  \tt matthias.erbar@math.uni-bielefeld.de}

\date{\today}

\maketitle
\begin{abstract} 
This paper investigates a time discrete variational model for splines in Wasserstein spaces to interpolate probability measures.
Cubic splines in Euclidean space are known to minimize the 
integrated squared acceleration subject to a set of interpolation constraints. 
As generalization on the space of probability measures 
the integral of the squared acceleration is considered as a spline energy and regularized by addition of the usual action functional. 
Both energies are then discretized in time using local Wasserstein-2 distances and the generalized Wasserstein barycenter.
The existence of time discrete regularized splines for given interpolation conditions is established. On the subspace of Gaussian distributions, the spline interpolation problem is solved explicitly and consistency in the discrete to continuous limit is shown.
The computation of time discrete splines is implemented numerically, based on entropy regularization and the Sinkhorn algorithm. 
A variant of Nesterov's accelerated gradient descent algorithm is applied for the minimization of the fully discrete functional.
A variety of numerical examples demonstrate the robustness of the approach and show striking characteristics of the method.
As a particular application the spline interpolation for synthesized textures is presented.
\end{abstract}
\section{Introduction}\label{sec:intro}
In this paper we will study a time discrete variational model to compute spline paths in the space of probability measures equipped with the Wasserstein-2 metric. The spline paths are defined as measure-valued paths minimizing a spline energy subject to interpolation constraints and boundary conditions.

In the last decade, higher-order interpolation methods attracted a lot of attention in time-sequence interpolation or regression in the context of data analysis. Applications are for instance in computer graphics, computer vision, or medical imaging. The objects to be interpolated are usually considered as shapes 
in some infinite dimensional manifold equipped with an application dependent Riemannian metric.
One approach is to consider  a spline energy functional as a second order extension of the first order
path energy on the Riemannian manifold. Given a set of objects -- from now on called key frames --  at disjoint times 
a spline curve is then defined as a minimizer of the spline energy subject to the key frame interpolation constraint. 

In Euclidean space, cubic splines
$x: [0,1] \to \R^d$ are known to be minimizers of the integral of the squared acceleration $\int_0^1 \vert \ddot x \vert^2 \d t$ due to a famous result by de Boor \cite{dB63}. Our proposed method can be seen as a generalization of de Boor's result to the Wasserstein space in discrete time. To see this, we use a simple rectangular quadrature rule to replace the integral, and the second order central difference to approximate the acceleration of a curve, to obtain
\begin{align*}
\int_0^1\vert\ddot x\vert^2\d t\approx 4K^3\sum_{k=1}^{K-1}\left\vert x_k-\frac{x_{k-1}+x_{k+1}}{2}\right\vert^2.
\end{align*}
In our case, we are interested in discrete measure-valued curves. Hence, it is natural to replace the Euclidean norm $\vert\cdot\vert$ with the Wasserstein $L^2$ distance between probability measures, and use a notion of barycenter between measures $\mu_{k-1}$ and $\mu_{k+1}$, denoted as $\Bary(\mu_{k-1},\mu_{k+1})$, instead of the middle point $\frac{x_{k-1}+x_{k+1}}{2}$. The proposed discrete spline functional studied in this paper will therefore be given by
\begin{align}
4K^3\sum_{k=1}^{K-1}\Wass^2(\mu_k, \Bary(\mu_{k-1},\mu_{k+1})).
\end{align}

Noakes \etal \cite{NoHePa89} generalized de Boor's result in a finite dimensional Riemannian context, introducing Riemannian cubic splines as stationary paths of the integrated squared covariant derivative of the velocity field of a path. To define continuous splines on the space of probability measures as investigated here, a similar geometric structure is required. Fortunately, the celebrated Benamou-Brenier formula \cite{BeBr00} allows for a formal endowment of a Riemannian structure on the Wasserstein space $\ProbW(\R^d)$, as originally described in \cite{Ot01}. Beyond this scope, however, a second order analysis of paths in Wasserstein spaces is required. Such an analysis has been developed by Gigli in \cite{Gi12}.
This approach in particular allows to define the acceleration of a curve of measures as the covariant derivative of its velocity.  Energy splines (E-splines) are then defined as minimizers of the total squared acceleration, see \eqref{def:spline_energy_cont}
for more details.  This functional, however, is not well-behaved as it is computationally intractable, and, unlike the usual action functional \eqref{eq:path_energy}, not convex. Thus, some relaxations thereof have been recently proposed:

Both Benamou \etal in \cite{BeGaVi19} and Chen \etal in \cite{ChCoGe18} independently introduced the concept denoted now as P-splines, short for path splines. 
P-splines are $\R^d$-valued stochastic processes $(X_t)_{t\in[0,1]}$ defined on some underlying probability space $(\domain, \mathbb{P})$ that solve the following minimization problem
\begin{equation}\label{eq:P-splines}
\min_{(X_t)_t}\int_0^1\int_\domain\Vert \ddot{X}_t\Vert^2 \d\mathbb{P}\d t,
\end{equation}
subject to $I\geq 2$ given marginal constraints $X_{\overline{t}_i}\sim\overline{\mu}_{i}$ for prescribed times $0= \overline{t}_1<\ldots<\overline{t}_I= 1$, and given probability measures $\overline{\mu}_{i}$ for all $i=1,\ldots,I$. 

Numerically, this computationally demanding task is solved via a relaxation based on multi-marginal optimal transport with quadratic cost and entropic regularization.
A drawback of this method is that a solution of \eqref{eq:P-splines} might fail to be deterministic, i.e. there is no guarantee that a Monge map $\phi_t:\R^d\rightarrow\R^d$ exists, such that $X_t=\phi_t(X_0)$, even if the marginal constraints are regular, see Chewi et al.~\cite{ChClGo+20}. 
In fact, any solution $(X_t)_t$ of \eqref{eq:P-splines} has spline trajectories $t\mapsto X_t(\omega)$ for $\mathbb{P}$-almost all $\omega\in\domain$, which formally follows from rewriting \eqref{eq:P-splines}:
\begin{align}\nonumber
\min_{{(X_t)_t} \atop {X_{t_i}\sim\overline{\mu}_i}} \int_0^1\int_\domain\Vert \ddot{X}_t\Vert^2 \d\mathbb{P}\d t 
&=
\min_{{Q\in \mathcal P(\R^{d\cdot I}) }\atop{ (\pi_i)_\#Q=\overline{\mu}_i}} \int_{\R^{d\cdot I}} \min_{t\mapsto y_t \atop y_{t_i}=x_i} \int_0^1\|\ddot {y}_t\|^2\d t \d Q(x_1,\dots, x_I)\\
&= \min_{{Q\in \mathcal P(\R^{d\cdot I}) }\atop{ (\pi_i)_\#Q=\overline{\mu}_i}} \int_{\R^{d\cdot I}}\int_0^1\Vert \ddot{s}_t\Vert^2\d t
\d Q(x_1,\dots, x_I),
\end{align}
where $t\mapsto s_t$ is the classical Euclidean spline interpolating the points $(t_i,x_i)$ and $\pi_i:\R^{d\cdot I}\rightarrow\R^d$ is the projection onto the $i$-th $d$-sized batch of coordinates, i.e. $\pi_i(y_1,\ldots,y_{d\cdot I})=(y_{d\cdot i+1},\ldots,y_{d\cdot i+d})$. Hence, any solution $(X_t)_t$ of \eqref{eq:P-splines} has spline trajectories $t\mapsto X_t(\omega)$ for $\mathbb{P}$-almost all $\omega\in\domain$ (see Fig. 1, top left).  

A different approach introduced by Chewi et al.~\cite{ChClGo+20} to construct measure-valued splines remedies this shortcoming and introduces so called transport splines (T-splines), where one studies the smooth interpolation of probability measures in the optimal transport context using a particle flow approach. To this end, samples are drawn from one distribution to be interpolated, usually $X_0\sim\mu_0$. These samples are then pushed by the Monge maps $T_i$ between consecutive prescribed distributions $\overline{\mu}_i, \overline{\mu}_{i+1}$ and the resulting chains of points $(T_i\circ\ldots\circ T_0\circ X_0)(\omega)$ at the prescribed times $\overline{t}_i$, for $i=1,\ldots, I,$ are interpolated using classical cubic spline interpolation (see Fig. 1 top middle and bottom left). In \cite{ChClGo+20}, a relation to energy splines is investigated for Gaussian distributions in the one dimensional case. This method enjoys computational advantages.

From a theoretical point of view, both the P-spline and T-spline approaches are based on the Lagrangian perspective of optimal transport. Hence, instead of directly minimizing probability measures, they work with stochastic processes $X_t$ that have $C^2$ sample paths and laws $\mu_t$. This flow perspective may be a more natural choice for some applications, since one is able to easily track particle trajectories in continuous time. In contrast, E-splines can be seen as working on the Eulerian perspective of optimal transport, as we are able to track densities and particle velocities passing through any fixed time and spatial position. In the aforementioned papers,  algorithms are given to compute sample trajectories of P-splines and T-splines. In this work, we propose a consistent variational time approximation of E-splines, and devise an algorithm on how to construct them. We shall prove that this approximation is consistent with the Riemannian geometry of the Wasserstein space in the Gaussian case. Moreover, we are able to construct very simple counterexamples in $1$D, where E-splines differ from P-splines and/or T-splines, and in both cases our approach properly and exactly (up to machine accuracy) matches the theoretical value of the E-spline, see Fig. \ref{fig:GaussianCounterexamples}. 
\begin{figure*}[htb]
\resizebox{\linewidth}{!}{
\tikzstyle{frame} = [line width=1.8pt, draw=red,inner sep=0.01em]
\begin{tikzpicture}
\begin{scope}[scale=1.0]

\begin{scope}[shift={(0,-11.02)}]
\node[anchor=south west] at (0.,0.) 
{\includegraphics[width=0.3\linewidth]{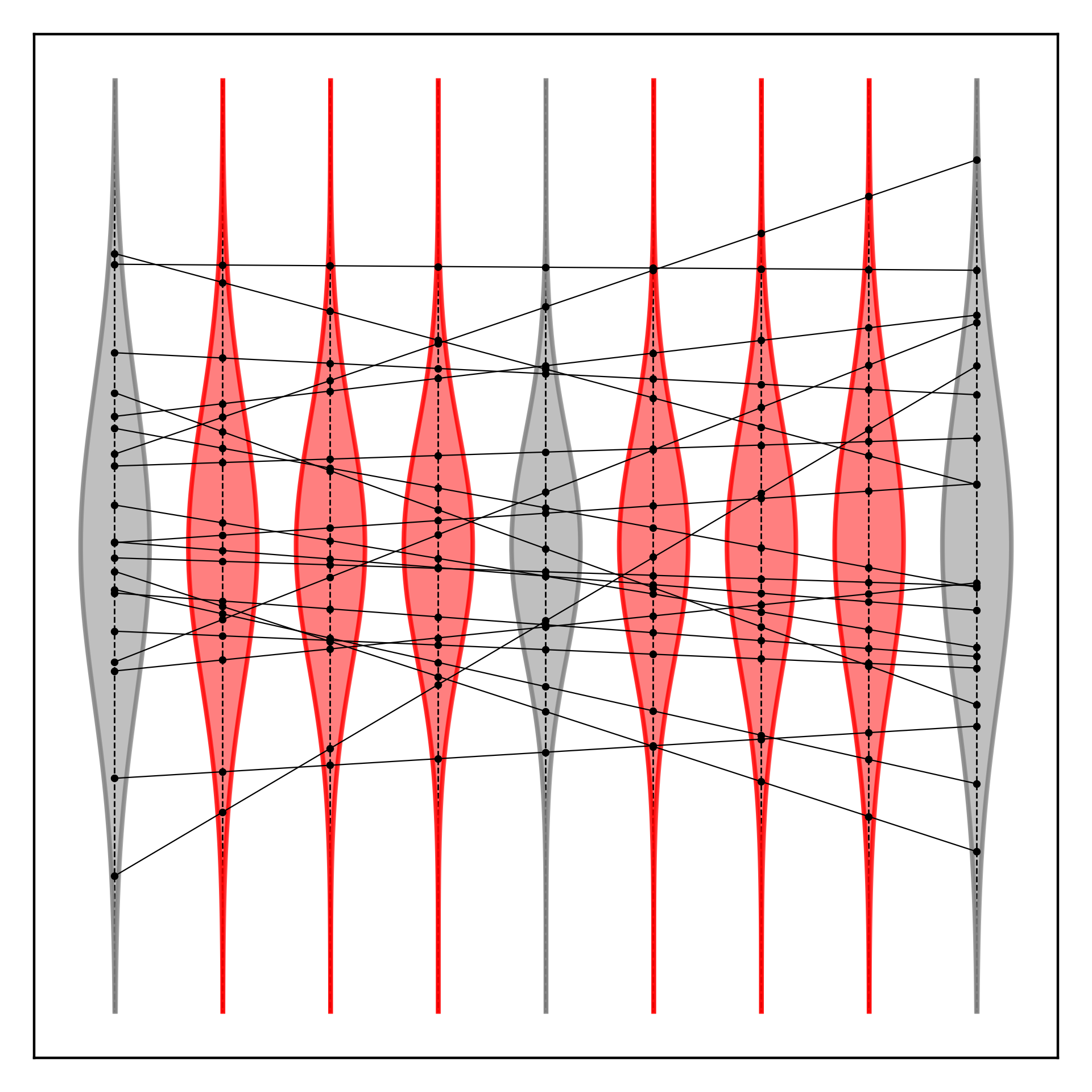}};
\node[anchor=south west] at (5.22,0.) 
{\includegraphics[width=0.3\linewidth]{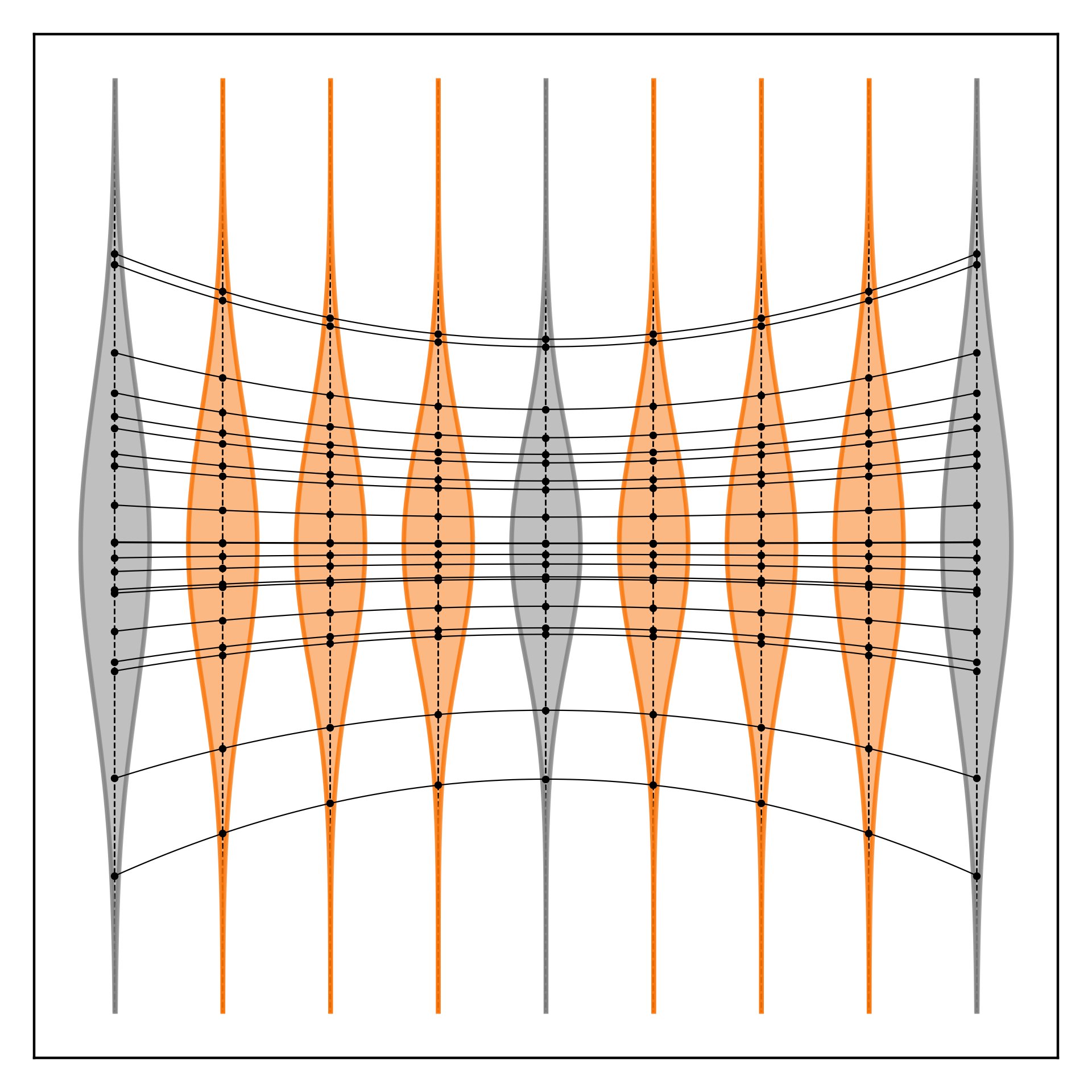}};
\node[anchor=south west] at (10.44,0.)
{\includegraphics[width=0.3\linewidth]{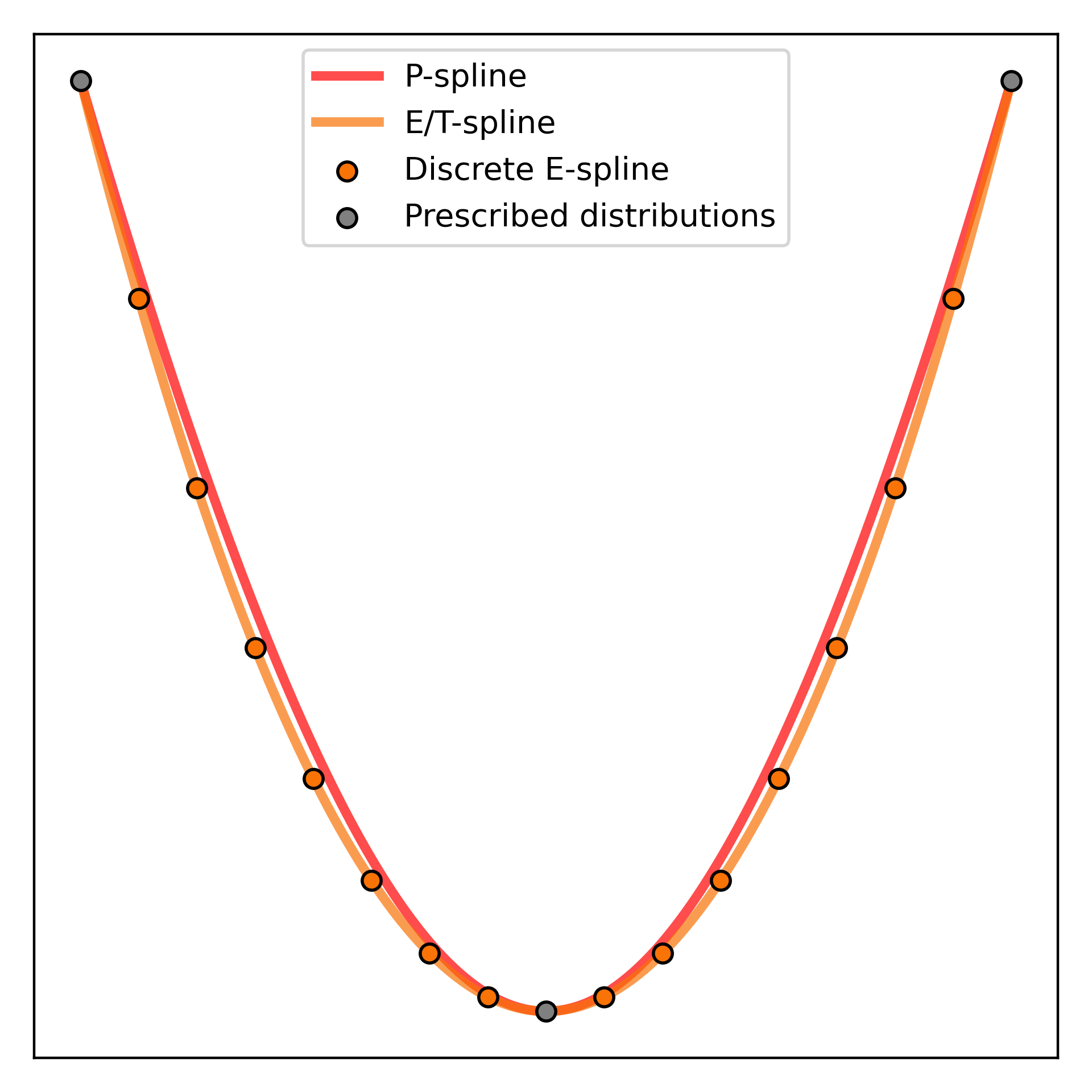}};
\node at (0.1, 0.1) {\fontsize{2}{2.5}\selectfont $t$};
\node at (0.68, 0.1) {\fontsize{2}{2.5}\selectfont $0$};
\node at (1.68, 0.1) {\fontsize{2}{2.5}\selectfont $0.25$};
\node at (2.68, 0.1) {\fontsize{2}{2.5}\selectfont $0.5$};
\node at (3.69, 0.1) {\fontsize{2}{2.5}\selectfont $0.75$};
\node at (4.7, 0.1) {\fontsize{2}{2.5}\selectfont $1$};
\node at (2.68, -0.25) {\tiny (a) P-spline with sample trajectories};
\begin{scope}[shift={(5.22,0.)}]
\node at (0.1, 0.1) {\fontsize{2}{2.5}\selectfont $t$};
\node at (0.68, 0.1) {\fontsize{2}{2.5}\selectfont $0$};
\node at (1.68, 0.1) {\fontsize{2}{2.5}\selectfont $0.25$};
\node at (2.68, 0.1) {\fontsize{2}{2.5}\selectfont $0.5$};
\node at (3.69, 0.1) {\fontsize{2}{2.5}\selectfont $0.75$};
\node at (4.7, 0.1) {\fontsize{2}{2.5}\selectfont $1$};
\node at (2.68, -0.25) {\tiny (b) E/T-spline with sample trajectories};
\end{scope}
\begin{scope}[scale=1.1, shift={(9.24,0.)}]
\node at (0.4, 0.1) {\fontsize{2}{2.5}\selectfont $t$};
\node at (0.68, 0.1) {\fontsize{2}{2.5}\selectfont $0$};
\node at (1.68, 0.1) {\fontsize{2}{2.5}\selectfont $0.25$};
\node at (2.68, 0.1) {\fontsize{2}{2.5}\selectfont $0.5$};
\node at (3.69, 0.1) {\fontsize{2}{2.5}\selectfont $0.75$};
\node at (4.7, 0.1) {\fontsize{2}{2.5}\selectfont $1$};
\node at (5.1, 0.25) {\fontsize{2}{2.5}\selectfont $0.7$};
\node at (5.1, 1.63) {\fontsize{2}{2.5}\selectfont $0.8$};
\node at (5.1, 3.015) {\fontsize{2}{2.5}\selectfont $0.9$};
\node at (5.1, 4.4) {\fontsize{2}{2.5}\selectfont $1$};
\node at (5.1, 4.7) {\fontsize{2}{2.5}\selectfont $\sigma$};
\node at (2.68, -0.25) {\tiny (c) Standard deviations of (a), (b)};
\end{scope}
\end{scope}

\begin{scope}[shift={(0,-17.04)},scale=1.0]
\node[anchor=south west] at (0.,0.) 
{\includegraphics[width=0.3\linewidth]{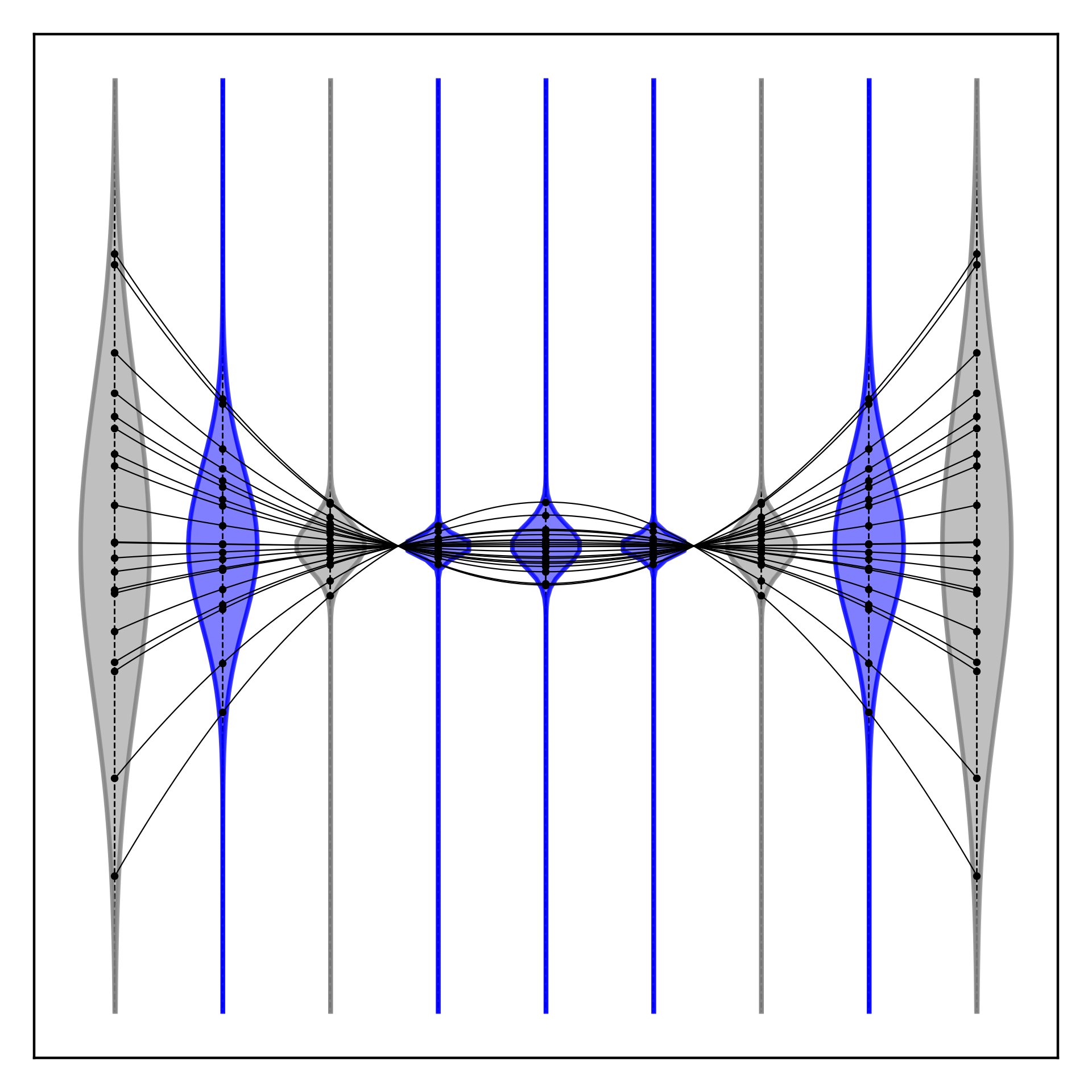}};
\node[anchor=south west] at (5.22,0.) 
{\includegraphics[width=0.3\linewidth]{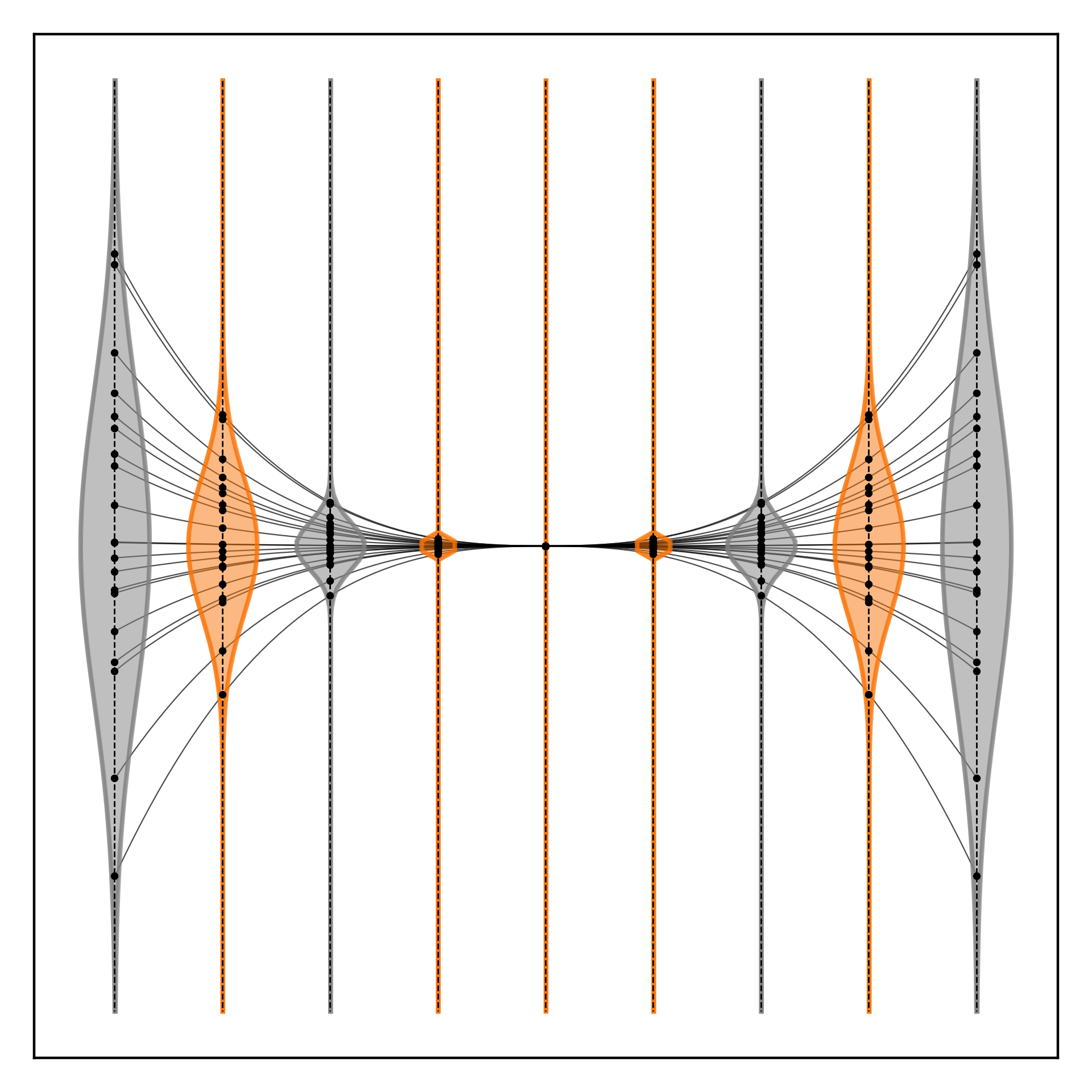}};

\node[anchor=south west] at (10.44,0.)
{\includegraphics[width=0.3\linewidth]{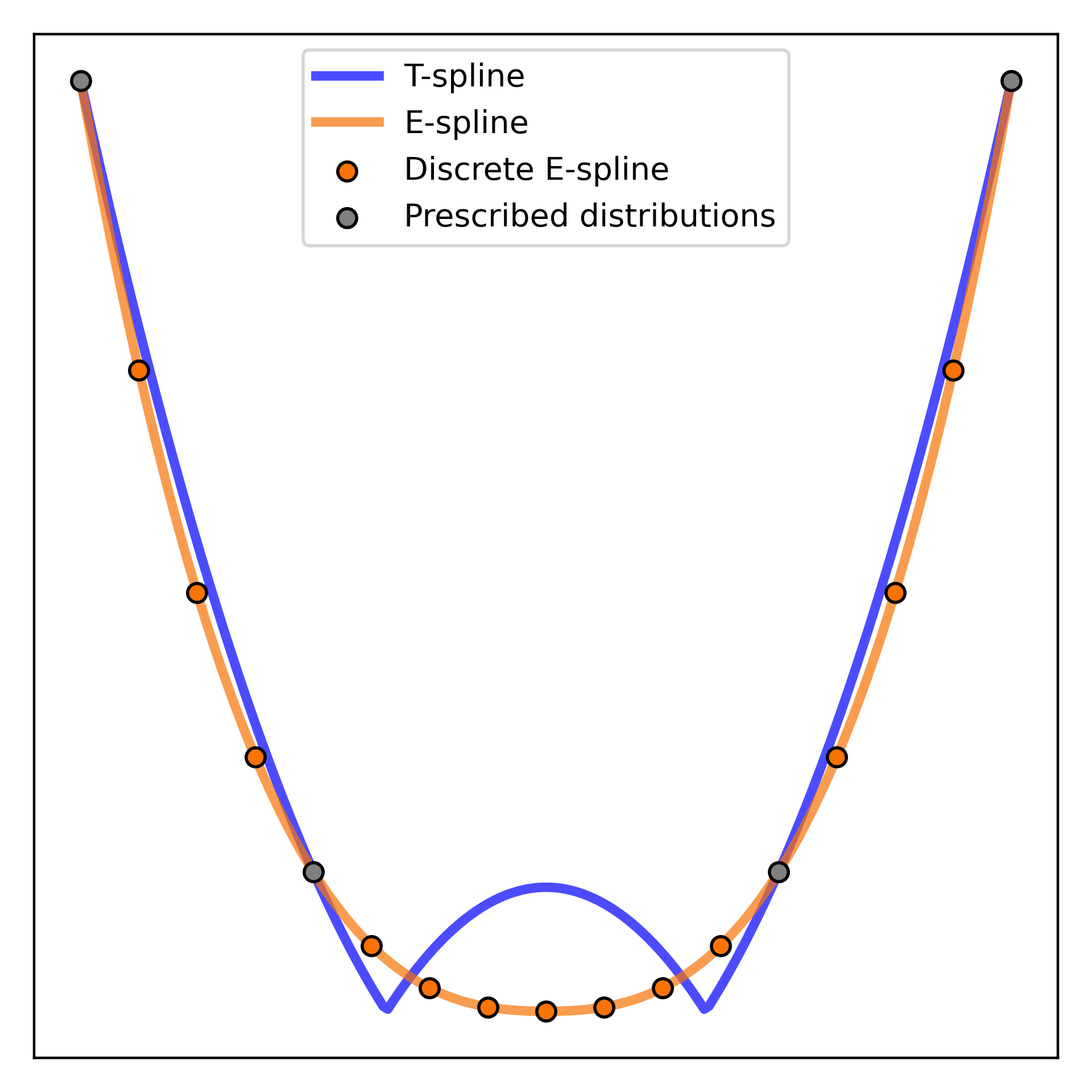}};
\node at (0.1, 0.1) {\fontsize{2}{2.5}\selectfont $t$};
\node at (0.68, 0.1) {\fontsize{2}{2.5}\selectfont $0$};
\node at (1.68, 0.1) {\fontsize{2}{2.5}\selectfont $0.25$};
\node at (2.68, 0.1) {\fontsize{2}{2.5}\selectfont $0.5$};
\node at (3.69, 0.1) {\fontsize{2}{2.5}\selectfont $0.75$};
\node at (4.7, 0.1) {\fontsize{2}{2.5}\selectfont $1$};
\node at (2.68, -0.25) {\tiny (d) T-spline with sample trajectories};
\begin{scope}[shift={(5.22,0.)}]
\node at (0.1, 0.1) {\fontsize{2}{2.5}\selectfont $t$};
\node at (0.68, 0.1) {\fontsize{2}{2.5}\selectfont $0$};
\node at (1.68, 0.1) {\fontsize{2}{2.5}\selectfont $0.25$};
\node at (2.68, 0.1) {\fontsize{2}{2.5}\selectfont $0.5$};
\node at (3.69, 0.1) {\fontsize{2}{2.5}\selectfont $0.75$};
\node at (4.7, 0.1) {\fontsize{2}{2.5}\selectfont $1$};
\node at (2.68, -0.25) {\tiny (e) E-spline with sample trajectories};
\end{scope}
\begin{scope}[scale=1.1, shift={(9.24,0.)}]
\node at (0.4, 0.1) {\fontsize{2}{2.5}\selectfont $t$};
\node at (0.68, 0.1) {\fontsize{2}{2.5}\selectfont $0$};
\node at (1.68, 0.1) {\fontsize{2}{2.5}\selectfont $0.25$};
\node at (2.68, 0.1) {\fontsize{2}{2.5}\selectfont $0.5$};
\node at (3.69, 0.1) {\fontsize{2}{2.5}\selectfont $0.75$};
\node at (4.7, 0.1) {\fontsize{2}{2.5}\selectfont $1$};
\node at (5.1, 0.4) {\fontsize{2}{2.5}\selectfont $0$};
\node at (5.1, 1.2) {\fontsize{2}{2.5}\selectfont $0.2$};
\node at (5.1, 2.) {\fontsize{2}{2.5}\selectfont $0.4$};
\node at (5.1, 2.8) {\fontsize{2}{2.5}\selectfont $0.6$};
\node at (5.1, 3.6) {\fontsize{2}{2.5}\selectfont $0.8$};
\node at (5.1, 4.4) {\fontsize{2}{2.5}\selectfont $1$};
\node at (5.1, 4.7) {\fontsize{2}{2.5}\selectfont $\sigma$};
\node at (2.68, -0.25) {\tiny (f) Standard deviations of (d), (e)};
\end{scope}
\end{scope}
\end{scope}
\end{tikzpicture}
}
\caption{A comparison of the different spline models sampled at nine equidistant times 
in 1D for Gaussian probability distributions as interpolation constraints depicted in grey. Sampled random variables are drawn as black dots, and their optimal sample trajectories are depicted by the connecting black curves:
top left: continuous P-spline (red); top middle: continuous E-spline/T-spline (orange) sampled at nine equidistant times; top right: standard deviations for both the P-spline (red) and E-spline/T-spline (orange). Orange dots represent the discrete values obtained with our method; bottom left: continuous T-spline (blue) sampled at nine equidistant times; bottom middle: continuous E-spline (orange) sampled at nine equidistant times; bottom right: standard deviations for both the T-spline (blue) and E-spline (orange). Orange dots represent the discrete values obtained with our  method.}
\label{fig:GaussianCounterexamples}
\end{figure*} 
Nowadays, there is a variety of spline approaches in non-linear spaces.
Trouv\'e and Vialard~\cite{TrVi12} investigated a second-order shape functional in landmark space based on a taylored optimal control approach.  Singh et al.~\cite{SiNiVi15} introduced an optimal control method involving a functional which measures the motion acceleration in a flow of  diffeomorphisms ansatz for image regression. Tahraoui and Vialard~\cite{TaVi19} consider a second-order variational model on the group of diffeomorphisms involving the Eulerian acceleration in the context of diffeomorphic flow.
They proposed a relaxed model leading to a Fisher-Rao functional, as a convex functional on the space of measures. 
Vialard \cite{Vi20} showed the existence of a minimizer of the Riemannian acceleration energy on the group of diffeomorphisms endowed with a right-invariant Sobolev metric of high order. 

For smooth temporal interpolation of data distributions
Chen and Karlsson \cite{ChKa18} studied an optimal control problem subject to the transport problem and interpolation conditions as constraints. 
Thereby, they in particular consider the transport of Gaussian distributions. 
In his thesis, Julien Clancy \cite{Cl21} compared different spline approaches in the space of probability measures. 
He investigated entropy regularization and extended the approach to the spline interpolation for unbalanced measures.

An alternative higher order approximation approach has been presented by Karimi and Georgiou in \cite{KaGe21}. 
They considered a regression problem for polynomial measure-valued curves and a probability law on such curves to 
approximate distributional snapshots. This approach can be viewed as a least-squares regression in Wasserstein space where a multi-marginal optimal transport formulation leads to a linear program and the Sinkhorn algorithm allows for the efficient computation in a entropy-regularized set up. Zhang and Noakes \cite{ZhNo19} investigated Riemannian cubic splines in the manifold of symmetric positive definite matrices using Lie algebra calculus and the Riemannian geometry on the space of Gaussian densities (the Bures-Wasserstein manifold \cite{Bu69}, \cite{FoKi16}, \cite{BhJa17}) induced by the Wasserstein distance \cite{MaMo18}.

Moreover, Rajkovi\'c \etal \cite{JuRaRu23} studied the spline interpolation of images where the underlying shape space reflects the metamorphosis model. The model separates in a physically intuitive way the Eulerian flow acceleration and the second material derivative of the image intensity.
The resulting model is not Riemannian in the sense that 
splines are minimizers of the squared covariant derivative of the path velocity as in \cite{NoHePa89,TaVi19,Vi20}. 
In fact, the covariant derivative of the path velocity in the Riemannian metric would lead to an interwoven model of the different types of acceleration.
A rigorous convergence analysis in terms of Mosco convergence~\cite{Mo69}, a stronger variant of $\Gamma$-convergence, of the time discrete to the time continuous metamorphosis model is presented in \cite{JuRaRu23}. This way also the existence of minimizers of the continuous spline energy is established.

Our time discretization of energy splines will rely on
a general theory for a variational time discretization of splines on Riemannian manifolds that has been proposed in \cite{HeRuWi18}. The core ingredients of the general spline discretization proposed therein are a functional $\Wass$ which approximates the squared Riemannian distance of two nearby objects on the manifold.
This approach has been applied in \cite{HeRuSc16} in computer graphics to the smooth interpolation of triangulated surfaces using the concept of discrete thin shells. In the context of probability measures the local functional $\Wass$ will be the squared Wasserstein distance and the approximate average will be the Wasserstein barycenter. 

\paragraph{Organization. }
This paper is organized as follows. 
In Section~\ref{sec:review} we will briefly review the Wasserstein distance between probability measures, the Riemannian perspective on Wasserstein spaces and the flow formulation of optimal transport. 
In Section~\ref{sec:splines_wass_spaces} the time continuous spline energy is derived using the Riemannian perspective and a variational time discretization of the continuous spline energy is introduced.
Section~\ref{sec:gaussian} expands on the special case of Gaussian distributions, proving consistency of the discrete functional with the continuous one.  Then, in Section~\ref{sec:mosco_convergence} we prove convergence of temporally extended discrete Wasserstein spline energies to time-continuous ones in the sense of Mosco for Gaussian distributions with diagonal covariance matrices.
Section~\ref{sec:fully_discrete} explains the fully discrete scheme which relies on the Sinkhorn algorithm and shows how to set up suitable variants of accelerated gradient algorithms \cite{Ne05} to numerically solve for a spline interpolation given a set of key frames. Moreover, experimental results of the application of this algorithm on probability measures are presented. 
Finally, Section~\ref{sec:textures} experimentally demonstrates the versatility of our approach by applying the spline interpolation to image and texture interpolation.

\section{Background}\label{sec:review}
In this section, we briefly review the classical theory of optimal transport (OT), and the Riemannian structure of the Wasserstein space induced by this OT metric. 

\subsection{Review of Optimal Transport}\label{ssec:reviewOT}
Let $\domain$ be a Polish space (separable, completely metrizable) that additionally satisfies the Heine-Borel property, i.e. its compact sets are exactly the closed and bounded ones. Moreover, we introduce the set of probability measures $\Prob(\domain)$ on $\domain$. The subset of probability measures $\mu$ with finite second moment, i.e. $\int_\domain d^2(x_0,x)\d\mu<\infty,$ for some (and any) $x_0\in\domain$ and a fixed metric $d(\cdot,\cdot)$ that completely metrizes $\domain$ will be denoted as $\ProbW(\domain)$. For two probability measures $\mu,\nu\in\Prob(\domain)$, we shall denote with $U(\mu,\nu)$ the set of couplings between them, that is, the set of (probability) measures $\Pi\in\Prob(\domain^2)$ with $\Pi(A\times\domain)=\mu(A)$ and $\Pi(\domain\times A)=\nu(A)$ for all Borel sets $A$ in $\domain$. 
For $\mu,\nu\in\Prob(\domain)$, the set $U_o(\mu,\nu)$ is the set of all couplings $\Pi$ between $\mu$ and $\nu$ that minimize $\int_{\domain^2}d^2(x,y)\d\Pi(x,y),$ i.e. the set of optimal couplings for the cost $d^2(\cdot,\cdot)$.
\begin{dfntn}[Wasserstein distance]
The squared ($L^2$-)Wasserstein distance between two probability measures $\mu,\nu\in\Prob(\domain)$ will be denoted by $\Wass^2$, and is defined as
\begin{align*}
\Wass^2(\mu,\nu)\coloneqq \inf_{\Pi\in U(\mu,\nu)}\int_{\domain^2}d^2(x,y)\d\Pi(x,y).
\end{align*}
\end{dfntn}

Note that an optimal coupling is guaranteed to exist, and hence the infimum is actually a minimum. Furthermore, restricting to the space $\ProbW(\domain)\times\ProbW(\domain)$ actually leads to a complete metric space, cf. \cite{Vi09}. With this in mind, we define a $\ProbW(\domain)$-valued curve $(\mu_t)_{t\in[0,1]}$ as absolutely continuous, if there exists $m\in L^1([0,1])$, so that $\Wass(\mu_t,\mu_s)\leq\int_s^tm(r)\d r$ for all $0\leq s\leq t\leq 1$.

Moreover, let $C_b^0(\domain)$ be the set of continuous, bounded functions on $\domain$. We then say that the sequence of measures $(\mu_k)_k$ converges narrowly to some $\mu\in\Prob(\domain)$, if
\begin{align*}
\int_\domain f\d\mu_k\rightarrow \int_\domain f\d\mu,
\end{align*}
for all $f\in C_b^0(\domain)$. This will be denoted by $\mu_k\rightharpoonup\mu$.

The concept of tightness of probability measures will play a key role in the sequel: A set $\mathcal{K}\subseteq\Prob(\domain)$ is said to be tight, if for any $\varepsilon>0$ there is a compact set $\domain_\varepsilon\subseteq\domain$, such that $\mu(\domain\setminus \domain_\varepsilon)\leq\varepsilon$ for all $\mu\in\mathcal{K}$. Prokhorov's theorem states that tightness of a set of measures is equivalent to relative compactness in the topology induced by the narrow convergence of measures, cf. \cite{Pr56}.

\subsection{Wasserstein spaces as a Riemannian Manifold}
In this section we consider the spline interpolation problem from a geometric perspective. To this end, we will rely on the formal definition of a Riemannian metric on $\ProbW(\R^d)$ given in \cite{Lo06} and chapter 8 of \cite{AmGi05}. We first introduce a characterization of absolutely continuous measure-valued curves $(\mu_t)_{t\in[0,1]}$. Indeed, absolute continuity of a curve $(\mu_t)_t$ is equivalent to the existence of a velocity field $v_t:\R^d\rightarrow\R^d$ for $t\in[0,1]$, satisfying certain estimates, and solving the continuity equation (CE):
\begin{align}\label{eq:cont_eq}
\partial_t\mu_t+\nabla\cdot(v_t\mu_t)=0 \ \ \ \text{in} \ \ (0,1)\times\R^d,
\end{align}
encoding the conservation of mass (see \cite{AmGi05}, Theorem 8.3.1. for a thorough proof). The above equation is to be understood in the sense of distributions. Moreover, due to the Benamou-Brenier formula (cf. \cite{BeBr00}, Proposition 1.1) one recovers the following definition of the Wasserstein distance in terms of the velocity field $(v_t)_t$:
\begin{align*}
\Wass^2(\mu_0,\mu_1)\coloneqq\inf_{(\mu,v) \in CE(\mu_0,\mu_1)} \int_0^1\int_{\R^d} \vert v_t\vert^2\d\mu_t\d t, 
\end{align*} 
where $CE(\overline\mu_0,\overline\mu_1)$ is the set of pairs $(\mu,v)$, such that $\mu=(\mu_t)_t$ is an absolutely continuous curve in $\ProbW(\R^d)$, and $v=(v_t)_t$ is a time-dependent vector field, such that it satisfies \eqref{eq:cont_eq} in the distributional sense, with $\mu_0=\overline\mu_0$ and $\mu_1=\overline\mu_1$. For a fixed curve $(\mu_t)_t$, the optimal velocity field $(v_t)_t$ of the above problem can be characterized as belonging to the set 
\begin{align*}
T_{\mu_t}\coloneqq\overline{\{ \nabla\varphi: \varphi\in C_c^\infty(\R^d) \}}^{L^2(\mu_t, \R^d)}
\end{align*}
for almost every $t\in[0,1]$ (cf. \cite{AmGi05}, Proposition 8.4.5), where $C_c^\infty(\R^d)$ is the set of all real-valued, smooth, compactly supported functions on $\R^d$, and the bar notation denotes the closure of a set with respect to the $L^2(\mu_t,\R^d)$ norm. This fact justifies the suggestive definition of the set $T_{\mu}$ as the tangent space of the Wasserstein space $\ProbW(\R^d)$ at the point $\mu$. The Riemannian metric on $\ProbW(\R^d)$ at $\mu$ is then simply given by the $L^2$ product
\begin{align*}
\langle v,w\rangle_{T_{\mu_t}}\coloneqq \int_{\R^d}\langle v,w\rangle\d\mu_t,
\end{align*}
where $\langle\cdot,\cdot\rangle$ on the right hand side represents the usual inner product on $\R^d$.
Then, the path energy $\pathenergy$ of the measure-valued curve $(\mu_t)_t$ can be expressed by 
\begin{align}\label{eq:path_energy}
\pathenergy((\mu_t)_t)=\inf_{v: (\mu,v) \in CE (\mu_0,\mu_1)} \int_0^1\int_{\R^d}\vert v_t\vert^2\d\mu_t\d t.
\end{align}
In their landmark paper \cite{BeBr00}, Benamou and Brenier showed that the functional being minimised in the last line is convex in the variables $\mu$ and $w=v\mu$. In \cite{dB63}, classical splines are defined as minimizers of the squared acceleration, integrated over time. The Riemannian counterpart to the acceleration of a particle is the covariant derivative of its velocity field $(v_t)_t$. To define this 
let us call a curve $(\mu_t)_t$ in $\ProbW(\R^d)$ \emph{regular}, if it is absolutely continuous and the optimal velocity vector field $(v_t)_t$ satisfying the continuity equation is Lipschitz in space and satisfies
\[\int_0^1 {\sf Lip}(v_t) \d t <\infty\;,\]
where ${\sf Lip}(v)$ denotes the Lipschitz constant of $v$. Then, by \cite{AmGi05}, Proposition 8.1.8, there exists a unique family of flow maps $\flowmap_s^t(\cdot):\R^d\rightarrow\R^d$ that satisfy
\begin{align}\label{eq:flow_map_eq}
\frac{d}{\d t}\flowmap^t_s(x)=v_t(\flowmap^t_s(x)), \ \ \ \flowmap_s^s(x)=x.
\end{align} 
We have that $\mu_t = (\flowmap_s^t)_\# \mu_s$ for all $s\leq t$.
The total derivative of an absolutely continuous vector field $(w_t)_t$ along a regular curve $(\mu_t)_t$ on $\ProbW(\R^d)$ is then defined for almost all $t\in(0,1)$ as
\begin{align*}
\frac{D}{\d t}w_t\coloneqq\lim_{h\rightarrow 0}\frac{w_{t+h}\circ \flowmap^{t+h}_t-w_t}{h},
\end{align*}
in the sense of $L^2(\mu_t)$. For a smooth vector field $(w_t)_t$ along a regular measure curve $(\mu_t)_t$, we can use \eqref{eq:flow_map_eq} to obtain explicitly 
\begin{align*}
\frac{D}{\d t}w_t=\partial_tw_t+\nabla w_t\cdot v_t.
\end{align*}
Finally, the covariant derivative can be given by projecting onto the tangent space
\begin{align}
\nabla_{v_t}w_t\coloneqq P_{\mu_t}(\partial_tw_t+\nabla w_t\cdot v_t),
\end{align}
where $P_{\mu}$ is the orthogonal projection in $L^2(\mu)$ onto the tangent space $T_{\mu}$. For a thorough derivation of the covariant derivative on $\ProbW(\domain)$, we refer to \cite{AmGi13}, chapter 6.
\section{Splines in Wasserstein Spaces}\label{sec:splines_wass_spaces}
\subsection{Definition of splines}
Based on the discussion in the previous section, for $v_t=\nabla\varphi_t\in T_{\mu_t}$ one may use $\nabla_{v_t}v_t$ as the acceleration of a regular measure-valued curve $\mu_t$. This leads to
\begin{align}\label{eq:cov_deriv_velocity}
\nabla_{v_t}v_t = P_{\mu_t}(\partial_tv_t+\nabla v_t\cdot v_t)=P_{\mu_t}(\dot{v}_t+\tfrac12\nabla \vert v_t\vert^2)=\dot{v}_t+\tfrac12\nabla \vert v_t\vert^2,
\end{align}
where the last equality holds due to $\dot v_t=\nabla\dot\varphi_t\in T_{\mu_t}$ and the second term already being in gradient-field form. This naturally leads to the following notion of a continuous-time spline energy functional. For a general curve $(\mu_t)_t:[0,1]\rightarrow\ProbW(\R^d)$ we set
\begin{align}\label{def:spline_energy_cont}
\mathcal{F}((\mu_t)_t) = \inf_{v}\int_0^1\int_{\mathbb{R}^d}\left\vert\dot{v}_t+\tfrac12 \nabla\vert v_t\vert^2\right\vert^2\d\mu_t\d t,
\end{align}
where the infimum is taken over sufficiently regular time-dependent vector fields $v=(v_t)_t$, such that $(\mu,v)\in CE(\mu_0,\mu_1)$, and $v_t\in T_{\mu_t}$ for all $t\in(0,1)$.
The spline interpolation problem in the Wasserstein space is then to find a curve $(\mu_t)_t:[0,1]\rightarrow\ProbW(\R^d)$ that minimizes the functional \eqref{def:spline_energy_cont}, subject to a set of $I>2$ point-wise interpolation constraints
\begin{align}\label{eq:interp_constraints}
\mu_{\overline{t}_i}=\overline{\mu}_i, \ \ \ i=1,\ldots, I,
\end{align}
for prescribed times $\overline{t}_i\in[0,1]$, $i=1,\ldots, I$, with $\overline{t}_1<\ldots<\overline{t}_I$ and $\overline{\mu}_i\in\ProbW(\R^d)$. As already discussed in \cite{dB63} for the Euclidean case, and \cite{HeRuWi18} for the Riemannian case, we may impose one of the following boundary conditions (b.c.):
\begin{align}
\text{natural b.c.: \ \ \ \ }&\text{no additional condition},\label{eq:naturalbc}\\
\text{Hermite b.c.: \ \ \ }&v_0=\overline{v}_0, v_1=\overline{v}_1 \text{ for given } \overline{v}_0\in T_{\mu_0} \text{ and } \overline{v}_1\in T_{\mu_1},\label{eq:hermitebc}\\
\text{periodic b.c.: \ \ \ }&\mu_0=\mu_1, v_0=v_1.\label{eq:periodicbc}
\end{align}
In the case of Hermite (also known as clamped) boundary conditions, we assume that $\overline{t}_1=0$ and $\overline{t}_I=1$, so that $\mu_0$ and $\mu_1$ are prescribed as well.

From a theoretical point of view, it will be advantageous to regularize the above spline energy by adding the path energy $\pathenergy$ multiplied by a regularization parameter $\delta>0$. Hence, we introduce the regularized spline energy functional 
\begin{align}\label{reg_spline_energy}
\splineenergy^\delta:=\splineenergy+\delta\pathenergy.
\end{align} 
This will ensure tightness of all probability measures with finite energy, and consequently existence in the time-discrete case.
\begin{dfntn}
For given times $\overline{t}_i\in[0,1]$ and prescribed probability distributions $\overline{\mu}_i\in\ProbW(\R^d)$, $i=1,\ldots,I$, we define a (regularized) spline interpolation $(\mu_t)_t$ as a minimizer of the spline energy functional \eqref{def:spline_energy_cont} (resp. \eqref{reg_spline_energy}) subject to \eqref{eq:interp_constraints} and at most one of the boundary conditions \eqref{eq:naturalbc}-\eqref{eq:periodicbc}. 
\end{dfntn}
\begin{xmpl} [Euclidean space]
The Wasserstein distance between two delta distributions located at $x$ and $y$ is equal to the Euclidean distance $\vert x-y\vert$, and the associated Wasserstein geodesic is given by the curve of delta distributions at the locations of the Euclidean geodesic interpolating the end points. We now briefly check whether our definition is also consistent with cubic splines in $\mathbb{R}^d$  when considering delta distributions.

Let $x:[0,1]\rightarrow\mathbb{R}^d$ be a twice-differentiable curve, and define the measure-valued curve $\mu_t=\delta_{x_t}$. Then, one checks that with the choice $v_t\equiv\dot{x}_t$, (CE) is satisfied in distributional sense:
\begin{align*}
\int_0^1\int_{\R^d}\left(\mu_t\partial_t\phi(t,x)+\mu_tv_t\nabla\phi(t,x)\right)\d t\d x &= \int_0^1\left(\partial_t\phi(t,x(t))+\dot x_t\cdot\nabla\phi(t,x(t))\right)\d t =0,
\end{align*}
for all $\phi\in C_c^\infty((0,1)\times\R^d)$. Moreover, as $v_t$ is constant in space, we have $Dv_t\equiv 0$. Due to \eqref{eq:cov_deriv_velocity}, we obtain $\nabla_{v_t}v_t=\ddot{x}_t$, so using \eqref{def:spline_energy_cont} one gets
\begin{align*}
\mathcal{F}((\mu_t)_t)=\int_0^1\int_{\R^d}\vert\ddot{x}_t\vert^2\d \mu_t\d t=\int_0^1\vert\ddot{x}_t\vert^2\d t,
\end{align*} 
for which the minimizer is given by the cubic spline subject to the interpolation constraints \cite{dB63}.
\end{xmpl}

The Wasserstein space $\ProbW(\R^d)$ is isometrically isomorphic to $\R^d\times \ProbW^0(\R^d)$, where the factor $\R^d$ represents the center of mass
and $\ProbW^0(\R^d)$ is the space of probability distributions centered around $0$. In this spirit the 
dynamic of spline paths can be split into the time evolution of the center the mass and the time evolution of the distribution around it, as we shall now demonstrate.

Let $(\mu,v)\in CE$ be a solution to the continuity equation, with $v=(v_t)_t$ being optimal. Hence, for all $t\in[0,1],$ $v_t$ is a gradient field, and in particular $Dv_t^T=Dv_t.$
Let $m_t\coloneqq\int x\d\mu_t(x)$ be the center of mass and let $\tilde\mu_t(\cdot )\coloneqq\mu_t(\cdot+m_t)$ be the re-centered distribution. Furthermore,
we define the re-centered velocity field 
$\tilde v_t(x)\coloneqq v_t(x+m_t)-\dot m_t$.
Then one easily checks that 
$(\tilde\mu,\tilde v)\in CE$.
Now, we first show a decoupling of the (first-order) action functional, i.e.
\begin{align*}
\int_0^1\int_{\R^d}\vert\tilde v_t\vert^2\d\tilde\mu_t\d t=\int_0^1\int_{\R^d}\vert v_t(\cdot+m_t)-\dot m_t\vert^2\d(\Ide-m_t)_\#\mu_t\d t=\int_0^1\int_{\R^d}\vert v_t-\dot m_t\vert^2\d\mu_t\d t\\
=\int_0^1\vert v_t\vert^2\d\mu_t\d t+\int_0^1\vert\dot m_t\vert^2\d t-2\int_0^1\int_{\R^d}\langle v_t,\dot m_t\rangle\d\mu_t\d t=\int_0^1\vert v_t\vert^2\d\mu_t\d t-\int_0^1\vert\dot m_t\vert^2\d t,
\end{align*}
where we used that by the continuity equation
\begin{equation}\label{eq:ce-mean}
\dot m_t = \frac{\d}{\d t}\int_{\R^d} x\d\mu_t(x) = \int_{\R^d} \nabla x \cdot v_t \d\mu_t(x) = \int_{\R^d} v_t\d\mu_t(x)\;.
\end{equation}
Next, we consider the decoupling of the (second-order) spline energy $\int_0^1\int_{\R^d}\vert \dot{v}_t+\frac{1}{2}\nabla\vert v_t\vert^2\vert^2\d\mu_t\d t$.
Taking into account  
\begin{align*}
&\dot{\tilde v}_t(x)=\partial_t(v_t(x+m_t)-\dot m_t)=\dot v_t(x+m_t)+(D v_t){(x+m_t)}\cdot\dot m_t-\ddot m_t,\\
&\nabla\vert\tilde v_t\vert^2=\nabla\vert\dot m_t\vert^2+\nabla\vert v_t(x+m_t)\vert^2-2\nabla\langle v_t(x+m_t),\dot m_t\rangle=2(D v_t){(x+m_t)}\cdot v_t(x+m_t)-2(D v_t){(x+m_t)}\dot m_t
\end{align*}
we obtain 
\begin{align*}
&\int_0^1\int_{\R^d}\left\vert \dot{\tilde v}_t+\frac{1}{2}\nabla\vert\tilde v_t\vert^2\right\vert^2\d\tilde\mu_t\d t\\
&=\int_0^1\int_{\R^d}\left\vert \dot v_t(x+m_t)+(D v_t){(x+m_t)}\cdot\dot m_t-\ddot m_t-(D v_t){(x+m_t)}\cdot\dot m_t+(D v_t){(x+m_t)}\cdot v_t(x+m_t)\right\vert^2\d\tilde\mu_t(x)\d t\\
&=\int_0^1\int_{\R^d}\left\vert \dot v_t(x+m_t)-\ddot m_t+(D v_t){(x+m_t)}\cdot v_t(x+m_t)\right\vert^2\d\tilde\mu_t(x)\d t\\
&=\int_0^1\int_{\R^d}\left\vert \dot v_t-\ddot m_t+D v_t(v_t)\right\vert^2\d\mu_t\d t\\
&=\int_0^1\int_{\R^d}\vert \dot v_t+D v_t(v_t)\vert^2\d\mu_t\d t+\int_0^1\vert\ddot m_t\vert^2\d t-2\int_0^1\int_{\R^d}\langle\ddot m_t,\dot v_t+D v_t(v_t)\rangle\d\mu_t\d t.
\end{align*}
Now, differentiating \eqref{eq:ce-mean} in time we achieve
\begin{align*}
\ddot m_t &= \frac{\d}{\d t}\int_{\R^d} v_t\d\mu_t
=
\int_{\R^d} \dot v_t +D v_t(v_t) \d\mu_t\;.
\end{align*}
Finally, plugging this back into the previous computation we get
\begin{align}\label{eq:spline-energy-decoupling}
\int_0^1\int_{\R^d}\vert \dot{\tilde v}_t+\frac{1}{2}\nabla\vert\tilde v_t\vert^2\vert^2 \d\tilde\mu_t\d t 
&=\int_0^1\int_{\R^d}\vert \dot{v}_t+\frac{1}{2}\nabla\vert v_t\vert^2\vert^2\d\mu_t\d t  
-\int_0^1\vert\ddot m_t\vert^2\d t 
\end{align}
This decoupling is advantageous for the numerical implementation. In fact, it leads to a reduced computing time (cf. Figure \ref{fig:decoupling}).

\subsection{Variational discretization of splines} \label{timediscrete}
The temporal discretization of (regularized) Wasserstein spline energies will be based on a variational problem.
To motivate the proposed discrete spline energy functional, let us consider the situation in Euclidean spaces, in which the velocity field $v$ of a smooth curve $x:[0,1]\rightarrow\R^d$ coincides with $\dot{x}$. By sampling this curve uniformly, i.e. taking $x_k\coloneqq x(t^K_k)$ for $t^K_k\coloneqq k/K$, $k=0,\ldots,K$, we are able to approximate the velocity at a time $t^K_k$ by finite differences, that is, $\dot{x}(t_k^K)\approx K(x_{k+1}-x_k)$. Therefore, we obtain
\begin{align*}
\vert\dot{x}(t_k^K)\vert^2\approx K^2\vert x_{k+1}-x_k\vert^2.
\end{align*}
Similarly, in Euclidean spaces the covariant derivative of the velocity field coincides with the acceleration $\ddot{x}$. We approximate this by central second order difference quotients, i.e. $\ddot{x}(t^K_k)\approx K^2(x_{k+1}-2x_k+x_{k-1}).$ Thus, defining $\Bary(x_{k+1},x_{k-1})\coloneqq\frac{x_{k+1}+x_{k-1}}{2}$ one obtains 
\begin{align*}
\vert\ddot{x}(t^K_k)\vert^2\approx4K^4\left\vert x_k-\frac{x_{k+1}+x_{k-1}}{2}\right\vert^2=4K^4\left\vert x_k-\Bary(x_{k+1},x_{k-1})\right\vert^2.
\end{align*}
A simple rectangular quadrature rule $\int_0^1f(t)\d t\approx K^{-1}\sum_{k=1}^{K-1}f(t^K_k)$ for $t^K_k\coloneqq\frac{k}{K}$ leads to the following approximations of the Euclidean velocity and acceleration functional, respectively:
\begin{align}\label{eq:eucl_path_nrg]}
\pathenergy(x)&=\int_0^1\vert\dot{x}_t\vert^2\d t\approx K\sum_{k=1}^{K}\left\vert x_{k+1}-x_k\right\vert^2,\\
\label{eq:eucl_spline_nrg]}
\splineenergy(x)&=\int_0^1\vert\ddot{x}_t\vert^2\d t\approx 4K^3\sum_{k=1}^{K-1}\left\vert x_k-\Bary(x_{k+1},x_{k-1})\right\vert^2.
\end{align}
Recall that the Euclidean barycenter is the solution to the following minimization problem:
\begin{align*}
\Bary(x, y)=\argmin_{z\in\R^d}\left(\vert x-z\vert^2+\vert y-z\vert^2\right),
\end{align*}
for some $x,y\in\R^d$. Hence, it is intuitive to replace the Euclidean $L^2$-norm with the Wasserstein distance, giving rise to the following discrete path energy 
\begin{align}\label{discrete_path_energy_def}
\PathEnergy^K(\bm\mu^K)\coloneqq K\sum_{k=0}^{K-1} \Wass^2(\mu^K_k,\mu^K_{k+1}),
\end{align}
for a $(K+1)$-tuple of probability measures $\bm{\mu}^K:=(\mu_0^K,\ldots,\mu_K^K)\in\ProbW(\domain)^{K+1}$.
Moreover, we will also give suitable definitions of a Wasserstein barycenter:
\begin{dfntn}
Let $\mu,\nu\in\Prob(\domain)$, and $t\in[0,1]$. The set of $t$-barycenters $\Bary^t(\mu,\nu)$ between $\mu$ and $\nu$
is the set of solutions of the following minimization problem
\begin{align} \label{bary_var_eq}
\argmin_{\rho\in\Prob(\domain)}\ (1-t)\Wass^2(\rho,\mu)+t\Wass^2(\rho,\nu).
\end{align}
For the sake of readability, we shall usually omit the $t$-index from both the notation and nomenclature when $t=\tfrac12$. 
\end{dfntn}
\begin{rmrk}\label{rmrk:bary}
If $\mu,\nu\in\ProbW(\domain)$, then we can guarantee the existence of a solution of \eqref{bary_var_eq} (cf. \cite{Lo06}). Indeed, let $\Pi\in U_o(\mu,\nu)$, and let $\pi^i$ be the projection operators onto the $i$-th coordinate. Then, 
\begin{align}
 \left((1-t) \pi_1+t\pi_2\right)_\#\Pi \in \Bary^t(\mu,\nu).
\end{align}
Let $\domain\subseteq\R^d$ and define $\ProbAC(\domain)\subset\ProbW(\domain)$ as the set of all absolutely continuous probability measures in $\ProbW(\domain)$ with respect to the Lebesgue measure on $\R^d$. If, in addition, at least one of $\mu$ or $\nu$ belong to the set $\ProbAC(\domain)$, then Brenier's theorem \cite{Br91} and McCann's interpolation \cite{Mc01} even guarantee uniqueness of the $t$-barycenter, given explicitly by
\begin{align}
\Bary^t(\mu,\nu)= \left\{((1-t)\Id+tT_\mu^\nu)_\#\mu\right\},
\end{align}
where $T_\mu^\nu$ is the optimal transport map from $\mu$ to $\nu$.
\end{rmrk}
In Wasserstein spaces, there is another related notion of barycenter, which will be called generalized Wasserstein barycenter:
\begin{dfntn}\label{def_gen_bary}
Let $\mu_1,\mu_2,\mu_3\in\Prob(\domain)$. Let now $\Pi$ be a three-measure coupling between them, i.e. $\Pi\in\Prob(\domain^3)$, and $\Pi(A\times\domain\times\domain)=\mu_1(A)$, $\Pi(\domain\times A\times\domain)=\mu_2(A)$, and $\Pi(\domain\times\domain\times A)=\mu_3(A)$ for all Borel sets $A\subseteq\domain$. If furthermore, we have that $(\pi^1,\pi^2)_\#\Pi\in U_o(\mu_1,\mu_2)$, and $(\pi^2,\pi^3)_\#\Pi\in U_o(\mu_2,\mu_3)$, we say $\Pi\in U_o(\mu_1,\mu_2,\mu_3)$. A measure $\mu$ is in the set of generalized (Wasserstein) $t$-barycenters
$\Bary^t_{\mu_2}(\mu_1,\mu_3)$ between $\mu_1$ and $\mu_3$ with base point $\mu_2$, 
if it is of the form  $\mu=((1-t) \pi^1+t \pi^3)_\#\Pi$ for a $\Pi\in U_o(\mu_1,\mu_2,\mu_3)$. 
When $t=\frac12$, we shall omit $t$ from the notation. 
\end{dfntn}
\begin{rmrk}\label{rmrk:gen_bary}
Similarly as above, if $\domain\subseteq\R^d$ and $\mu_2\in\ProbAC(\domain)$, then Brenier's theorem guarantees uniqueness of the generalized $t$-barycenter, given explicitly by
\begin{align}
\Bary^t_{\mu_2}(\mu_1,\mu_3)=\left\{((1-t)T_2^1+tT_2^3)_\#\mu_2\right\},
\end{align}
where $T_2^i$ is the optimal transport map from $\mu_2$ to $\mu_i$, $i=1,3$. Note that $(1-t)T_2^1+tT_2^3$ is again an optimal map (since it inherits the structure of being the gradient of a convex function from $T_2^i$).
\end{rmrk}
In analogy to equation \eqref{eq:eucl_spline_nrg]} we will define two notions of time discrete spline energies related to the different kinds of barycenters introduced above:
\begin{dfntn} [Discrete spline energy] Let $\bm{\mu}^K:=(\mu_0^K,\ldots,\mu_K^K)\in\Prob(\domain)^{K+1}$ be a $(K+1)$-tuple of probability measures. The discrete spline energy $\SplineEnergy^K$ of $\bm{\mu}^K$ is then defined as
\begin{align}\label{discrete_spline_energy}
\SplineEnergy^K(\bm\mu^K)\coloneqq \inf_{\tilde{\bm{\mu}}^K} 4K^3  
\sum_{k=1}^{K-1}  \Wass^2(\mu_k^K,\tilde \mu^K_k),
\end{align}
where the infimum is taken over all  $\tilde{\bm{\mu}}^K = (\tilde \mu^K_k)_{k=1,\ldots, K-1}$ with $\tilde \mu^K_k \in \Bary(\mu_{k-1}^K,\mu_{k+1}^K)$.
Similarly, one defines the generalized discrete spline energy $\SplineEnergyG^K$ of $\bm{\mu}^K$ as
\begin{align}\label{discrete_gen_spline_energy}
\SplineEnergyG^K(\bm\mu^K)\coloneqq\inf_{\tilde{\bm{\mu}}^K} 4K^3\sum_{k=1}^{K-1}\Wass^2(\mu_k^K,\tilde \mu^K_k),
\end{align}
where the infimum is taken over all  $\tilde{\bm{\mu}}^K = (\tilde \mu^K_k)_{k=1,\ldots, K-1}$ with $\tilde \mu^K_k \in \Bary_{\mu_k}(\mu_{k-1}^K,\mu_{k+1}^K)$.
The regularized discrete spline energies are given by
\begin{align}\label{regularized_discrete_spline_energy}
\SplineEnergy^{\delta,K}\coloneqq\SplineEnergy^K+\delta\PathEnergy^K, \ \ \  \SplineEnergyG^{\delta,K}\coloneqq\SplineEnergyG^K+\delta\PathEnergy^K
\end{align}
for $\delta>0$ (for $\delta=0$ we retrieve the non-regularized spline energy).
Computing a (regularized) time-discrete spline interpolation now consists in finding a tuple $\bm{\mu}^K=(\mu_0^K,\ldots,\mu_K^K)$ that minimizes the functional \eqref{regularized_discrete_spline_energy} in some sense to be defined, subject to a set of $I>2$ point-wise interpolation constraints
\begin{align}\label{eq:interp_constraints_d}
\mu_{K\overline{t}_i}^K=\overline{\mu}_i, \ \ \ i=1,\ldots, I,
\end{align}
for fixed prescribed times $\overline{t}_i\in[0,1]$, which fulfil $K\overline{t}_i\in\N_0$, with $\overline{t}_1<\ldots<\overline{t}_I$ and $\overline{\mu}_i\in\ProbW(\mathbb{R}^d)$ for $i=1,\ldots, I$. 
\end{dfntn}

The discrete counterparts of boundary conditions, one of which may be additionally imposed, can be written as follows:
\begin{align}
&\text{natural b.c.: \ \ \ \ no additional condition},\label{eq:naturalbc_d}\\
&\text{Hermite b.c.: \ \ \ }\mu^K_0=\overline{\mu}_0, \ \mu^K_1=\overline{\mu}_1, \ \mu^K_{K-1}=\overline{\mu}_{K-1}, \ \mu^K_K=\overline{\mu}_K,\label{eq:hermitebc_d}\\
&\text{periodic b.c.: \ \ \ }\mu_1^K=\mu_K^K,  \ \mu_0^K=\mu_{K-1}^K. \label{eq:periodicbc_d}
\end{align}
Now we are in position to define regularized time-discrete spline interpolations:
\begin{dfntn} [Regularized discrete spline interpolations] \label{def_spline_interp} 
For $2\leq I \leq K$, given data points $\overline{t}_i\in[0,1]$ fulfilling $K\overline t_i\in\N_0$, $\delta>0$ and fixed data $\overline{\mu}_i\in\Prob(\domain)$ for $i=1,\ldots,I$, we define the tuple $\bm\mu^K\in\ProbW(\domain)^{K+1}$ to be a regularized (generalized) discrete spline interpolation if it is a minimizer of the discrete spline energy functional $\SplineEnergy_{(G)}^{\delta,K}$ with $\delta > 0$ (cf.~\eqref{regularized_discrete_spline_energy}) 
that satisfy the interpolation constraints \eqref{eq:interp_constraints_d} and one of the boundary conditions \eqref{eq:naturalbc_d}-\eqref{eq:periodicbc_d}. 
\end{dfntn}
We will now show existence of a minimizer of the regularized spline energy functional introduced above, for all $\delta>0$. 
First, let us show a technical lemma:
\begin{lmm}
Let $\Omega$ be as in Subsection \ref{ssec:reviewOT} and let $(\mu_n)_n\subseteq\Prob(\domain)$ be tight, and $(\nu_n)_n\subseteq\Prob(\domain)$. If $\sup_{n}\Wass^2(\mu_n,\nu_n)\leq C<\infty$, then $(\nu_n)_n$ is also tight.
\end{lmm}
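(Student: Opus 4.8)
The plan is to exploit the bounded transport cost through a Markov-type inequality applied to optimal couplings, combined with the Heine--Borel property to guarantee that enlargements of a compact set remain compact.

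First I would fix $\varepsilon>0$. Since $(\mu_n)_n$ is tight, there is a compact set $K\subseteq\domain$ with $\mu_n(\domain\setminus K)\leq\varepsilon$ for all $n$. For each $n$ I pick an optimal coupling $\Pi_n\in U_o(\mu_n,\nu_n)$, so that $\int_{\domain^2}d^2(x,y)\d\Pi_n=\Wass^2(\mu_n,\nu_n)\leq C$. The goal is to produce a single compact set, depending only on $\varepsilon$ and $C$ (not on $n$), that captures all but $\bigO(\varepsilon)$ of the mass of every $\nu_n$.

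Next, for $r>0$ I consider the closed $r$-neighborhood $K_r:=\{y\in\domain:d(y,K)\leq r\}$. Because $K$ is compact it is bounded, hence $K_r$ is closed and bounded, and the Heine--Borel property guarantees that $K_r$ is compact. To estimate $\nu_n(\domain\setminus K_r)=\Pi_n(\domain\times(\domain\setminus K_r))$ I split the integration domain according to whether the first coordinate lies in $K$. The contribution from $(\domain\setminus K)\times\domain$ is bounded by the first marginal, namely $\mu_n(\domain\setminus K)\leq\varepsilon$. On $K\times(\domain\setminus K_r)$ one has $d(x,y)\geq d(y,K)>r$, so Markov's inequality yields $\Pi_n(K\times(\domain\setminus K_r))\leq r^{-2}\int_{\domain^2}d^2(x,y)\d\Pi_n\leq C/r^2$. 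Hence $\nu_n(\domain\setminus K_r)\leq\varepsilon+C/r^2$ uniformly in $n$.

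Finally, choosing $r\geq\sqrt{C/\varepsilon}$ gives $\nu_n(\domain\setminus K_r)\leq 2\varepsilon$ for all $n$ with $K_r$ compact; since $\varepsilon>0$ was arbitrary, this is exactly the tightness of $(\nu_n)_n$. I expect the only delicate point to be the use of the Heine--Borel property: in a general Polish space the neighborhood $K_r$ of a compact set need not be compact, and the argument would fail at that step. It is precisely this structural hypothesis on $\domain$ that makes the enlarged sets $K_r$ admissible tightness witnesses, so I would flag it explicitly rather than treat it as routine.
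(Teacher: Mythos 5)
Your proof is correct, and it differs from the paper's in logical organization rather than in substance. The paper argues by contradiction: if $(\nu_n)_n$ were not tight, there would be $\varepsilon>0$ so that for every $R>0$ some $\nu_{k(R)}$ places mass $>\varepsilon$ outside the closed ball $\overline{B_R(\omega)}$, while tightness of $(\mu_n)_n$ confines all but $\varepsilon/2$ of the mass of every $\mu_n$ to a fixed ball $\overline{B_r(\omega)}$; any coupling must then move mass $>\varepsilon/2$ a distance at least $R-r$, so $\Wass^2(\mu_{k(R)},\nu_{k(R)})>\tfrac{\varepsilon}{2}(R-r)^2$, which exceeds $C$ once $R$ is large. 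The underlying inequality --- transported mass times squared distance bounds the quadratic cost from below --- is exactly your Markov inequality read in the contrapositive direction. What you do differently is run the estimate forward: you enlarge the compact set $K$ carrying the $\mu_n$ to its closed $r$-neighborhood $K_r$ and bound $\nu_n(\domain\setminus K_r)\leq\varepsilon+C/r^2$ uniformly in $n$. This buys an explicit, quantitative tightness witness ($r=\sqrt{C/\varepsilon}$ gives defect at most $2\varepsilon$) and dispenses with the contradiction scaffolding and the choice of $k(R)$; the paper's version gets away with slightly simpler compact sets (closed balls around a single point rather than neighborhoods of a compact set), but both constructions lean on the Heine--Borel hypothesis in the same way. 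You are also right to flag Heine--Borel as essential rather than routine: without it the lemma is false --- on $\N$ with the discrete metric, $\mu_n=\delta_0$ is tight and $\Wass^2(\delta_0,\delta_n)=1$ for all $n$, yet $(\delta_n)_n$ is not tight --- and the paper invokes the same property implicitly when it encodes non-tightness of $(\nu_n)_n$ in terms of the closed balls $\overline{B_R(\omega)}$.
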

\begin{proof}
We will argue by contradiction: Assume that $(\nu_n)_n$ is not tight. Then, there is an $\varepsilon>0$, so that for all $R>0$ there is a $k=k(R)\in\N$ that fulfils $\nu_k(\Omega\setminus\overline{B_R(\omega)})>\varepsilon$ for some fixed $\omega\in\Omega$. 

Let $r>0$ be chosen so that $R>r$, and $\mu_n(\Omega\setminus\overline{B_r(\omega)})\leq\varepsilon/2$ for all $n\in\N$. This is possible due to the tightness of $(\mu_n)_n$. For any coupling $\Pi\in\Prob(\Omega^2)$ of $\mu_{k(R)}$ and $\nu_{k(R)}$ we have that $$\Pi(\lbrace(x,y):d^2(x,y)\geq(R-r)^2\rbrace)>\epsilon/2.$$ Hence, we obtain $$\Wass^2(\mu_k,\nu_k)>\frac{\varepsilon}{2}(R-r)^2.$$
Since $\varepsilon$ and $r$ are fixed, and $k$ only depends on $R$, we can choose $R$ big enough so that $\Wass^2(\mu_k,\nu_k)>C$, which leads to the desired contradiction. 
\end{proof}
\begin{thrm}
For all $\delta>0$, $K\in\N$, $2\leq I\leq K$, given times $\overline{t}_i\in[0,1]$ and prescribed probability measures $\overline{\mu}_i\in\ProbW(\R^d)$ for all $i=1,\ldots, I$, there exists a discrete regularized (generalized) spline interpolation in the sense of Definition \ref{def_spline_interp}.
\end{thrm}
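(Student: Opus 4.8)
The plan is to run the direct method of the calculus of variations, for which the technical Lemma above is precisely the compactness tool. I will carry out the argument for the generalized energy $\SplineEnergyG^{\delta,K}$; the case of $\SplineEnergy^{\delta,K}$ is identical with the three-marginal couplings replaced by two-marginal ones. First I would check that the infimum is finite and that minimizing sequences are energy-bounded. Since all constraints $\overline\mu_i\in\ProbW(\R^d)$ have finite pairwise Wasserstein distance, concatenating the McCann geodesics between consecutive constraints and sampling at the times $t_k^K$ yields an admissible tuple of finite path and spline energy, so $\inf\SplineEnergyG^{\delta,K}<\infty$. Fixing a minimizing sequence $(\bm\mu^{K,(n)})_n$ with energy $\le C<\infty$, the regularization gives $\delta\,\PathEnergy^K(\bm\mu^{K,(n)})\le C$, hence $\Wass^2(\mu_k^{K,(n)},\mu_{k+1}^{K,(n)})\le C/(\delta K)$ uniformly in $n$ and $k$. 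This uniform control of consecutive distances is exactly what the regularization buys me.

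Next I would establish compactness by propagating tightness. The interpolation constraints \eqref{eq:interp_constraints_d} freeze at least two of the components to fixed measures $\overline\mu_i$, and a single probability measure on the Polish space $\domain$ is automatically tight. Starting from such a frozen index and combining the uniform bound $\Wass^2(\mu_k^{K,(n)},\mu_{k+1}^{K,(n)})\le C/(\delta K)$ with the Lemma, tightness transfers to each neighbouring index; iterating along the chain $k=0,\dots,K$ makes every family $(\mu_k^{K,(n)})_n$ tight. By Prokhorov's theorem each family is narrowly pre-compact, and a finite diagonal extraction over the $K+1$ indices produces a subsequence (not relabelled) with $\mu_k^{K,(n)}\rightharpoonup\mu_k^K$ for all $k$.

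It then remains to prove lower semicontinuity along this subsequence. The path-energy term is handled by the joint lower semicontinuity of $\Wass^2$ under narrow convergence. For the spline term I would, for each $k$, select an optimal three-coupling $\Pi_k^{(n)}\in U_o(\mu_{k-1}^{K,(n)},\mu_k^{K,(n)},\mu_{k+1}^{K,(n)})$ realizing the generalized barycenter $b_k^{(n)}:=\Bary_{\mu_k^{K,(n)}}(\mu_{k-1}^{K,(n)},\mu_{k+1}^{K,(n)})=(\tfrac12\pi^1+\tfrac12\pi^3)_\#\Pi_k^{(n)}$. Since the three marginals are tight, $(\Pi_k^{(n)})_n$ is tight, so a further subsequence converges narrowly to some $\Pi_k$ whose marginals are the limits $\mu_{k-1}^K,\mu_k^K,\mu_{k+1}^K$; continuity of the affine pushforward then gives $b_k^{(n)}\rightharpoonup b_k:=(\tfrac12\pi^1+\tfrac12\pi^3)_\#\Pi_k$. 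Provided $b_k$ is an \emph{admissible} generalized barycenter of the limit triple, joint lower semicontinuity of $\Wass^2$ yields $\Wass^2(\mu_k^K,b_k)\le\liminf_n\Wass^2(\mu_k^{K,(n)},b_k^{(n)})$, and summing over $k$ completes the lower semicontinuity of the functional. Finally the limit is admissible — the frozen components carry the interpolation constraints to the limit, and the chosen boundary condition is stable under narrow limits (for periodic b.c.\ the identity $\mu_0=\mu_K$ persists and the energy is the one summed up to index $K$) — so $\bm\mu^K$ attains the infimum.

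I expect the crux to be the admissibility of $b_k$ flagged in the previous step, i.e.\ showing that the narrow limit $\Pi_k$ of optimal couplings is \emph{still optimal} in the pairwise marginals, so that $b_k$ is a genuine generalized barycenter rather than the pushforward of a merely feasible plan. This closedness of $U_o$ is not automatic under mere narrow convergence on the non-compact space $\R^d$: second-moment mass can escape to infinity while keeping the family tight, which lets $\Wass^2$ drop discontinuously in the limit and can destroy optimality of the limiting plan. I would therefore address it by upgrading the narrow convergence of the compactness step to convergence in $\Wass^2$ — equivalently, by proving uniform integrability of the second moments along the minimizing sequence — after which stability of optimal transport plans gives $\Pi_k\in U_o$ and hence the admissibility of $b_k$. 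Securing this second-moment control (exploiting the energy bound and the fixed constraints $\overline\mu_i$) is the main technical obstacle; everything else is routine.
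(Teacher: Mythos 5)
Your architecture is the paper's own: finiteness of the infimum, the bound $\delta\PathEnergy^K\le C$ giving uniform control of consecutive Wasserstein distances, propagation of tightness from the constrained indices via the technical lemma, Prokhorov extraction over the $K+1$ indices, tightness and narrow convergence of the optimal (three-)couplings, narrow convergence of the pushforward barycenters, and lower semicontinuity of $\Wass^2$ under narrow convergence. Up to that point the two arguments coincide (the paper writes the proof for $\SplineEnergy^{\delta,K}$ and remarks that $\SplineEnergyG^{\delta,K}$ is analogous; you do the reverse).

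The problem is the step you yourself declare to be the crux and then leave open. You claim that closedness of $U_o$ under narrow convergence ``is not automatic'' on $\R^d$ and must be repaired by upgrading to $\Wass_2$-convergence via uniform integrability of second moments. This diagnosis is incorrect, and the paper needs no such upgrade. Stability of optimality under narrow convergence is a standard fact for the quadratic cost: if $\Pi^{(n)}\in U_o(\mu^{(n)},\nu^{(n)})$ and $\Pi^{(n)}\rightharpoonup\Pi$, then $\supp\,\Pi$ is cyclically monotone --- any finite collection of points of $\supp\,\Pi$ is a limit of points of $\supp\,\Pi^{(n)}$ (portmanteau on open balls), and the monotonicity inequalities pass to the limit by continuity of $\vert x-y\vert^2$ --- and a plan with cyclically monotone support whose marginals lie in $\ProbW(\R^d)$ has finite cost and is therefore optimal (Villani, \emph{Optimal Transport: Old and New}, Thms.~5.20 and 5.10; equivalently the stability results in Ambrosio--Gigli, \emph{A user's guide to optimal transport}, which is what the paper cites). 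The only hypothesis to secure is that the limit marginals belong to $\ProbW(\R^d)$; this is exactly what the paper checks, by induction from the constrained indices via $\Wass(\mu_k,\delta_{x_0})\le\Wass(\mu_k,\mu_{k-1})+\Wass(\mu_{k-1},\delta_{x_0})$ and the finite energy bound --- a step you should make explicit, but which is available to you verbatim. Escaping second-moment mass can only make $\Wass^2$ drop in the limit; that is lower semicontinuity, which is precisely the inequality direction the direct method wants, and it cannot destroy optimality of the limit plan. Note also that your proposed repair is likely not even attainable: the energy bound yields uniformly \emph{bounded} second moments, not uniformly \emph{integrable} ones, so you could not in general force $\Wass_2$-convergence along the minimizing sequence. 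With the stability theorem substituted for your missing step (applied, in the generalized case, to the two pairwise marginals $(\pi^1,\pi^2)_\#\Pi$ and $(\pi^2,\pi^3)_\#\Pi$ of the limit three-coupling), your proof closes and becomes the paper's proof.
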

\begin{proof}
Any choice of $\mu_k\in\Prob(\R^d)$ for $k=0,\ldots, K$ gives a finite regularized spline energy $\overline{F}\coloneqq\SplineEnergy^{\delta,K}((\mu_k)_k)$. Let $(\bm{\mu}^{(n)})_n$ be a minimizing sequence for $\SplineEnergy^{\delta,K}$ under the given constraints. In particular, $\sup_n\SplineEnergy^{\delta,K}(\bm{\mu}^{(n)})\leq\overline{F}$. Thus, $$\overline{F}\geq\sup_n\SplineEnergy^{\delta,K}(\bm{\mu}^{(n)})\geq\sup_n\delta\Wass^2(\mu_{k}^{(n)},\mu_{k+1}^{(n)}),$$ for any $k$. For $i=1,\ldots, I$, $\mu_{K\overline t_i}^{(n)}=\overline{\mu}_{K\overline t_i}$ for all $n$. Since any constant measure-valued sequence is tight, by the previous lemma the sequence $\left(\mu_{K\overline t_i+1}^{(n)}\right)_n$ is also tight. We can use the previous lemma multiple times and "propagate" tightness by induction. Next, by Prokhorov's theorem we can choose a subsequence, so that for all $k\in\lbrace 0,\ldots,K\rbrace$ the sequence $(\mu_{k}^{(n)})_n$ is narrowly convergent to some $\mu_k\in\Prob(\R^d)$. In fact, we have by the triangle inequality 
\begin{align*}
\Wass(\mu^{(n)}_k,\delta_{x_0})\leq\Wass(\mu^{(n)}_k,\mu^{(n)}_{k-1})+\Wass(\mu^{(n)}_{k-1},\delta_{x_0}),
\end{align*}
for a point $x_0$ in $\R^d$. If $k-1=K\overline{t}_i$ for some $i=1,\ldots, I$, then $\mu_{k-1}=\overline\mu_{i}\in\ProbW(\R^d)$ and hence, the second term on the right hand side is uniformly bounded in $n$. Since the first term on the right hand side is one term in the discrete path energy contained in $\SplineEnergy^{\delta,K}$, it is uniformly bounded in $n$ as well. We now use $\mu_k^{(n)}\rightharpoonup\mu_k$ and the lower semi-continuity of $\Wass$ under narrow convergence to show that
\begin{align*}
\Wass^2(\mu_k,\delta_{x_0})\leq\liminf_{n\rightarrow\infty}\Wass^2(\mu^{(n)}_k,\delta_{x_0})<\infty.
\end{align*}
Proceeding by induction, we obtain that $\mu_k\in\ProbW(\R^d)$ for all $k=0,\ldots,K$.
Let us now rewrite \eqref{discrete_spline_energy}:
\begin{align*}
\SplineEnergy^{K}(\bm\mu^{(n)})&=4K^3\sum_{k=1}^{K-1}\inf_{\tilde\mu_k^{(n)}\in\Bary(\mu_{k-1}^{(n)},\mu_{k+1}^{(n)})}\Wass^2(\mu_k^{(n)},\tilde\mu_k^{(n)})\\
&=4K^3\sum_{k=1}^{K-1}\inf_{\tilde\Pi^{(n)}_{k-1,k+1}\in U_o(\mu_{k-1}^{(n)},\mu_{k+1}^{(n)})}\Wass^2\left(\mu_k^{(n)},\left(\frac{1}{2} \pi^1+\frac{1}{2} \pi^2\right)_\#\tilde\Pi^{(n)}_{k-1,k+1}\right).
\end{align*}

Next, we denote the value of the inner infimum above $I^{(n)}_k$ and assume that  $\Pi_{k-1,k+1}^{(n)}\in U_{o}((\mu_{k-1}^{(n)},\mu_{k+1}^{(n)}))$ is chosen such that $\Wass^2\left(\mu_k^{(n)},\left(\frac{1}{2} \pi^1+\frac{1}{2} \pi^2\right)_\#\Pi^{(n)}_{k-1,k+1}\right) \leq I^{(n)}_k+1/n$. By the stability of optimal couplings  \cite[Prop.~7.1.3]{AmGi13}, $\Pi^{(n)}_{k-1,k+1}$ converges (up to a subsequence) to an optimal coupling $\Pi_{k-1,k+1}$ of $\mu_{k-1}$ and $\mu_{k+1}$.
This entails narrow convergence of the barycenter
$$\Bary(\mu_{k-1}^{(n)},\mu_{k+1}^{(n)})\ni\left(\frac{1}{2} \pi^1+\frac{1}{2} \pi^2\right)_\#\Pi^{(n)}_{k-1,k+1}\rightharpoonup \left(\frac{1}{2} \pi^1+\frac{1}{2} \pi^2\right)_\#\Pi_{k-1,k+1}=:\tilde\mu_k\in\Bary(\mu_{k-1},\mu_{k+1})$$
due to the continuity of the projections $\pi^i$ (for $i=1,2$), and Remark \ref{rmrk:bary}.

Finally, we use the lower semi-continuity of the Wasserstein distance under narrow convergence and the fact that the spline energy contains a minimization over the choice of barycenters, to obtain
$$\SplineEnergy^{\delta,K}(\bm\mu)\leq 4K^3\sum_{k=1}^{K-1}\Wass^2(\mu_k,\tilde \mu_k)+\delta\PathEnergy^K(\bm\mu)\leq\liminf_{n\rightarrow\infty}\SplineEnergy^{\delta,K}(\bm\mu^{(n)}),$$
where $\bm\mu\coloneqq(\mu_0,\ldots,\mu_K)$.
As the right-hand sequence was assumed to be a minimizing sequence, 
$\bm\mu$ is indeed a spline interpolation according to Definition \ref{def_spline_interp}. The proof of existence of generalized spline interpolations is by analogy. The only remarkable difference is to prove that generalized barycenters narrowly converge to a generalized barycenter, up to a subsequence. To see this, let $\Pi_{k-1,k,k+1}^{(n)}\in\Prob(\R^{3d})$ be a three-measure optimal transport plan between $\mu_{k-1}^{(n)}, \mu_k^{(n)}$ and $\mu_{k+1}^{(n)}$, i.e. $\Pi_{k-1,k,k+1}^{(n)}\in U_o(\mu_{k-1}^{(n)},\mu_k^{(n)},\mu_{k+1}^{(n)})$ (cf. Definition \ref{def_gen_bary}). Once again, as the marginals of $\Pi^{(n)}_{k-1,k,k+1}$ are tight, the sequence of optimal couplings $(\Pi^{(n)}_{k-1,k,k+1})_n$ is also tight, and due to the lower semicontinuity of $\Wass$, it narrowly converges (up to a subsequence) to an optimal coupling $\Pi_{k-1,k,k+1}$ of $\mu_{k-1},\mu_k$ and $\mu_{k+1}$. From this, narrow convergence (up to a subsequence) of the sequence of generalized barycenters $\left(\frac{1}{2} \pi^1+\frac{1}{2} \pi^3\right)_\#\Pi^{(n)}_{k-1,k,k+1}$ to the generalized barycenter $\left(\frac{1}{2} \pi^1+\frac{1}{2} \pi^3\right)_\#\Pi_{k-1,k,k+1}$ follows, again due to the continuity of the projections $\pi^i$ (for $i=1,3$), and Definition \ref{def_gen_bary}.
\end{proof}

\section{Gaussian E-splines}\label{sec:gaussian}
In this section we will explicitly derive the continuous spline energy for measure-valued curves restricted to the space of Gaussian distributions, i.e. minimizers of the spline energy among Gaussian curves, and show its consistency with the discrete spline energy notions we defined in the previous section. Let us first introduce some notation:
\begin{dfntn}\label{def:gauss_space}
Let $\BW$ be the space of symmetric, positive definite $d\times d$ matrices, and $\BWd\subset\BW$ the space of diagonal, positive definite $d\times d$ matrices. Then, one can identify the space of Gaussian probability measures with the set $\R^d\times\BW$ through the bijective map 
\begin{align*}
\Phi:\R^d\times\BW &\longrightarrow\Phi(\R^d\times\BW)\subset\ProbW(\R^d)\\
(m,\sigma) &\mapsto\mathcal{N}(m,\sigma^2),
\end{align*}
where $\mathcal{N}(m,\sigma^2)$ is the Gaussian probability measure with mean $m$ and standard deviation matrix $\sigma$, i.e. the absolutely continuous probability measure with respect to the Lebesgue measure $\mathcal{L}$ on $\R^d$ with density $\frac{\d \mathcal{N}(m,\sigma^2)}{\d\mathcal{L}}$ given by $(2\pi)^{-\frac{d}{2}}\det(\sigma)^{-1}e^{-\tfrac12 (x-m)^T\sigma^{-2}(x-m)}$.
Defining $\avg:\ProbW(\R^d)\rightarrow\R^d,$ $\mu\mapsto\int_{\R^d}x\d\mu(x),$ and $\cov:\ProbW(\R^d)\rightarrow\BW,$ $\mu\mapsto\int_{\R^d}(x-\avg(\mu))(x-\avg(\mu))^T\d\mu(x)$ as the mean and covariance matrix of a probability measure $\mu$, respectively, one can straightforwardly check that the inverse $\Phi^{-1}:\Phi(\R^d\times\BW)\longrightarrow\R^d\times\BW$ is explicitly given by $\mu\mapsto\left(\avg(\mu),\std(\mu)\right)$, where the standard deviation matrix $\std(\mu)$ is the unique element in $\BW$ with $\std^2(\mu)=\cov(\mu)$. 
\end{dfntn}
\subsection{The case of general Gaussian distributions}
In what follows, we explicitly compute the spline energy for curves in the space of Gaussian distributions. To this end, we will first list some facts about optimal transport in this restricted setting. Since the space of Gaussian distributions is contained in $\ProbAC(\R^d)$, we shall from now on abuse notation and denote with $\Bary(\mu,\nu)$ the unique element in the set of barycenters, rather than the set itself.

\begin{prpstn}\label{prop:gauss_facts}
Let $m_1,m_2\in\R^d$, and $\sigma_1,\sigma_2\in\BW$. Define $\mu_1=\mathcal{N}(m_1,\sigma_1^2)$ and $\mu_2=\mathcal{N}(m_2,\sigma_2^2)$. Then, the following statements hold:
\begin{enumerate}
\item The optimal transport map $T$ from $\mu_1$ to $\mu_2$ is given by $x\mapsto T(x)=m_2+\sigma_1^{-1}(\sigma_1\sigma_2^2\sigma_1)^{\tfrac12}\sigma_1^{-1}(x-m_1)$. If $\sigma_1$ and $\sigma_2$ are simultaneously diagonalizable, $T$ is simplified to $T(x)=m_2+\sigma_1^{-1}\sigma_2(x-m_1)$.
\item The squared $L^2-$Wasserstein distance between $\mu_1$ and $\mu_2$ is $\Wass^2(\mu_1,\mu_2)=\vert m_1-m_2\vert^2+B^2(\sigma_1,\sigma_2),$ where $B^2(\sigma_1,\sigma_2)\coloneqq\tr(\sigma_1^2+\sigma_2^2-2(\sigma_1\sigma_2^2\sigma_1)^{1/2})$ is the squared Bures-Wasserstein metric defined in \cite{Bu69}.  If $\sigma_1$ and $\sigma_2$ are simultaneously diagonalizable, $B^2$ is given by $B^2(\sigma_1,\sigma_2)=\Vert\sigma_1-\sigma_2\Vert_F^2$, where $\Vert A\Vert^2_F\coloneqq\tr(A^TA)$ is the Frobenius norm of a matrix $A\in\R^{d\times d}$.
\item For all $t\in[0,1]$, $\Bary^t(\mu_1,\mu_2)$ is a Gaussian distribution with 
\begin{align*}
\avg\left(\Bary^t(\mu_1,\mu_2)\right)&=(1-t)m_1+tm_2,\\
\std\left(\Bary^t(\mu_1,\mu_2)\right)&=\left[\left((1-t)\sigma_1+t\sigma_1^{-1}(\sigma_1\sigma_2^2\sigma_1)^{\tfrac12}\right)\left((1-t)\sigma_1+t\sigma_1^{-1}(\sigma_1\sigma_2^2\sigma_1)^{\tfrac12}\right)^T\right]^{\tfrac12}
\end{align*}
\item Let $m\in\R^d$, $\sigma\in\BW$, and $\mu\coloneqq\mathcal{N}(m,\sigma^2)$. For all $t\in[0,1]$, $\Bary^t_\mu(\mu_1,\mu_2)$ is a Gaussian distribution with 
\begin{align*}
\avg\left(\Bary^t_\mu(\mu_1,\mu_2)\right)&=(1-t)m_1+tm_2,\\
\std\left(\Bary^t_\mu(\mu_1,\mu_2)\right)&=\left[\left((1-t)\sigma^{-1}(\sigma\sigma_1^2\sigma)^{\tfrac12}+t\sigma^{-1}(\sigma\sigma_2^2\sigma)^{\tfrac12}\right)\left((1-t)\sigma^{-1}(\sigma\sigma_1^2\sigma)^{\tfrac12}+t\sigma^{-1}(\sigma\sigma_2^2\sigma)^{\tfrac12}\right)^T\right]^{\tfrac12}.
\end{align*}
\end{enumerate}
\end{prpstn}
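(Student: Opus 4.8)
The plan is to establish item (1) first, since the remaining three claims follow from it by direct computation. For (1) I would invoke Brenier's theorem \cite{Br91}: the optimal transport map is the \emph{unique} map of the form $\nabla\psi$ with $\psi$ convex that pushes $\mu_1$ forward to $\mu_2$. As both measures are Gaussian, I make the affine ansatz $T(x)=m_2+A(x-m_1)$ with $A\in\BW$. The pushforward of $\mathcal{N}(m_1,\sigma_1^2)$ under such a $T$ is the Gaussian $\mathcal{N}(m_2,A\sigma_1^2A^T)$, so matching the target covariance forces $A\sigma_1^2A=\sigma_2^2$. One then checks that $A=\sigma_1^{-1}(\sigma_1\sigma_2^2\sigma_1)^{1/2}\sigma_1^{-1}$ is symmetric positive definite and solves this equation: the middle factor $\sigma_1^{-1}\sigma_1^2\sigma_1^{-1}=\Id$ collapses, leaving $\sigma_1^{-1}(\sigma_1\sigma_2^2\sigma_1)\sigma_1^{-1}=\sigma_2^2$. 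Because $A$ is symmetric positive definite, $T=\nabla\psi$ for the convex quadratic $\psi(x)=m_2\cdot x+\tfrac12(x-m_1)^TA(x-m_1)$, so Brenier certifies that $T$ is the optimal map. The simultaneously diagonalizable case is immediate, since commuting $\sigma_1,\sigma_2$ give $(\sigma_1\sigma_2^2\sigma_1)^{1/2}=\sigma_1\sigma_2$ and hence $A=\sigma_1^{-1}\sigma_2$.

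With $T$ in hand, item (2) is a direct integration. Writing $x-T(x)=(m_1-m_2)+(\Id-A)(x-m_1)$ and integrating against $\mu_1$, the cross term vanishes by the zero-mean property, leaving $\Wass^2(\mu_1,\mu_2)=\vert m_1-m_2\vert^2+\tr\bigl((\Id-A)\sigma_1^2(\Id-A)\bigr)$. Expanding this trace and substituting $A\sigma_1^2A=\sigma_2^2$ together with the cyclicity identity $\tr(A\sigma_1^2)=\tr\bigl((\sigma_1\sigma_2^2\sigma_1)^{1/2}\bigr)$ yields the Bures--Wasserstein form $\tr\bigl(\sigma_1^2+\sigma_2^2-2(\sigma_1\sigma_2^2\sigma_1)^{1/2}\bigr)$. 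In the diagonalizable case the integrand reduces to $\tr\bigl((\sigma_1-\sigma_2)^2\bigr)=\Vert\sigma_1-\sigma_2\Vert_F^2$.

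Items (3) and (4) then follow from the explicit barycenter formulas already recorded. By Remark \ref{rmrk:bary}, which applies since Gaussians are absolutely continuous, $\Bary^t(\mu_1,\mu_2)=((1-t)\Id+tT)_\#\mu_1$; this pushforward map is affine, so its image is again Gaussian. Reading off mean and covariance gives $\avg=(1-t)m_1+tm_2$ and, with $M\coloneqq((1-t)\Id+tA)\sigma_1=(1-t)\sigma_1+t\sigma_1^{-1}(\sigma_1\sigma_2^2\sigma_1)^{1/2}$, the covariance $MM^T$, whence $\std=[MM^T]^{1/2}$ as claimed. Item (4) is identical in spirit: by Remark \ref{rmrk:gen_bary}, $\Bary^t_\mu(\mu_1,\mu_2)=((1-t)T_\mu^1+tT_\mu^2)_\#\mu$, where each $T_\mu^i$ is the affine optimal map from item (1) with base $\sigma$; summing the linear parts gives $N\coloneqq(1-t)\sigma^{-1}(\sigma\sigma_1^2\sigma)^{1/2}+t\sigma^{-1}(\sigma\sigma_2^2\sigma)^{1/2}$ and covariance $NN^T$.

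The only genuinely delicate point is item (1), and specifically the selection of the symmetric positive-definite root: infinitely many matrices satisfy the pushforward constraint $A\sigma_1^2A^T=\sigma_2^2$ (any candidate may be modified by an orthogonal factor), yet only the symmetric positive-definite one is the gradient of a convex function and hence optimal by Brenier. Once this choice is justified, the remainder is bookkeeping with matrix square roots and the cyclicity of the trace: items (2)--(4) reduce to integrating against, or pushing forward by, explicit affine maps, and affine maps preserve Gaussianity. I would therefore expect essentially all of the effort to lie in item (1) and treat the rest as routine verification.
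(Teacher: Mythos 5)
Your proposal is correct, but it takes a more self-contained route than the paper for the first two items. The paper disposes of items (1) and (2) by citation: the optimal map and the Bures--Wasserstein formula are taken directly from Peyr\'e and Cuturi \cite{PeCu19} (equations (2.40)--(2.42)). You instead re-derive both: item (1) via Brenier's theorem applied to an affine ansatz, checking that $A=\sigma_1^{-1}(\sigma_1\sigma_2^2\sigma_1)^{1/2}\sigma_1^{-1}$ is symmetric positive definite, solves $A\sigma_1^2A=\sigma_2^2$, and is hence the gradient of a convex quadratic potential; and item (2) by integrating $\vert x-T(x)\vert^2$ against $\mu_1$, killing the cross term by the zero-mean property, and using cyclicity of the trace. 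What your route buys is a first-principles argument that makes explicit the one genuinely delicate point -- why the symmetric positive definite solution of the pushforward constraint, rather than an orthogonally modified one, is the optimal map -- which the citation route leaves implicit; what the paper's route buys is brevity and avoidance of this bookkeeping. Note also that your item (2) depends on item (1): identifying the cost of the specific map $T$ with $\Wass^2(\mu_1,\mu_2)$ requires Brenier's uniqueness (valid since $\mu_1$ is absolutely continuous), and your ordering handles this correctly. For items (3) and (4) your argument coincides with the paper's: both rest on the affine pushforward identity $F_\#\mathcal{N}(a,\Sigma)=\mathcal{N}(Aa+b,A\Sigma A^T)$ (which the paper states and proves inside its proof, and which you use throughout), combined with Remarks \ref{rmrk:bary} and \ref{rmrk:gen_bary} and the explicit maps from item (1).
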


\begin{proof}
\begin{enumerate}
\item: See \cite{PeCu19}, equation (2.40). 
\item: See \cite{PeCu19}, equations (2.41)-(2.42).
\item and (4)~: It is straightforward to prove that for  $a, b\in\R^d$, $A\in\R^{d\times d}$ and $\Sigma\in\BW$ then for the map $F:x\mapsto Ax+b$, it holds $F_\#\mathcal{N}(a,\Sigma)=\mathcal{N}(Aa+b,A\Sigma A^T)$. Plugging in the explicit expression for $T$ given in (1), and using Remarks \ref{rmrk:bary} and \ref{rmrk:gen_bary} respectively, one obtains the desired results. 
\end{enumerate}
\end{proof}
\begin{rmrk}
The above proposition implies that Wasserstein geodesics $(\mu_t)_{t\in[0,1]}$ between two Gaussian distributions are also Gaussian distributed for all $t\in[0,1]$. However, at this point we are not able to prove an analogous statement for Wasserstein splines. 
\end{rmrk}

\begin{prpstn} [Consistency]\label{prop:consistency}
Let $(m_t, \sigma_t)_t$ be a curve in $C^3([0,1],\R^d\times\BW)$, and let $(\mu_t)_t\coloneqq\mathcal{N}(m_t,\sigma_t^2)$ be the respective Gaussian-valued curve. Moreover, for $k=0,\ldots, K$, define $\mu_k^K\coloneqq\mu_{t_k^K}$, with $t_k^K\coloneqq k/K$. Then, we have 
\begin{align}\label{eq:consistency_path}
\pathenergy((\mu_t)_t)&=\int_0^1\left\Vert\sigma_t^{-1}\left.\frac{\d}{\d h}\right\vert_{h=0}(\sigma_t\sigma_{t+h}^2\sigma_t)^{\tfrac12}\right\Vert_F^2\d t+\int_0^1\vert\dot{m}_t\vert^2\d t=\PathEnergy^K((\mu_0^K,\ldots,\mu_K^K))+\bigO(K^{-1}),\\
\label{eq:consistency_spline}
\splineenergy((\mu_t)_t)&=\int_0^1\left\Vert\sigma_t^{-1}\left.\frac{\d^2}{\d h^2}\right\vert_{h=0}(\sigma_t\sigma_{t+h}^2\sigma_t)^{\tfrac12}\right\Vert_F^2\d t+\int_0^1\vert\ddot{m}_t\vert^2\d t=\SplineEnergyG^K((\mu_0^K,\ldots,\mu_K^K))+\bigO(K^{-1}),
\end{align}
where the implicit constant in $\bigO(K^{-1})$ is independent of $K$. 
\end{prpstn}

\begin{proof}
Recall from Proposition \ref{prop:gauss_facts} that the optimal transport map $T_t^s$ from $\mu_t$ to $\mu_{s}$ is given by 
$T_t^{s}(x)=A_t^{s}(x-m_t)+m_{s}$  where $A_t^{s}\coloneqq\sigma_t^{-1}\left(\sigma_t\sigma_{s}^2\sigma_t\right)^{\tfrac12} \sigma_t^{-1}$.
Further we note that the optimal velocity field $v_t$ in the continuity equation solved by $\mu_t$ is given for almost all $t\in(0,1)$ by (see \cite[eq.~(8.4.8)]{AmGi05})
\[
v_t(x)= \partial_s\big|_{s=t} T_t^s(x) = \partial_s\big|_{s=t}A_t^s(x-m_t) + \dot m_t\;.
\]
By the assumptions on the curve $(\mu_t,\sigma_t)_t$, the matrix-valued function $A_t^s$ is continuously-differentiable in $t$ and $s$ with all derivatives up to order 3 uniformly bounded for $s,t\in[0,1]$. Hence also $T_t^s(x)$ is continuously differentiable in $t$ and $s$ and all derivatives up to order three can be bounded by $C\|x\|$ for a constant $C$ independent of $s,t\in [0,1]$ and $x\in \R^d$. In particular, $v_t(x)$ is continuously differentiable in $t$ with derivatives up to order two bounded by $C\|x\|$.

Since $\frac{T_t^{t+h}+T_t^{t-h}}{2}$ is the optimal map between $\Bary_{\mu_t}(\mu_{t-h},\mu_{t+h})$ and $\mu_t$, cf. Remark \ref{rmrk:gen_bary}, we have that 
\begin{equation}\label{eq:one-term-spline}
4\frac{\Wass^2(\mu_t,\Bary_{\mu_t}(\mu_{t-h},\mu_{t+h}))}{h^4} = 
\int_{\R^d}\frac{\vert T^{t+h}_t-2\cdot\Ide+T^{t-h}_t\vert^2}{h^4}\d \mu_t\;.
\end{equation}
By Taylor expansion, we have
\begin{align*}
T_t^{t+h}&= \Ide - h \partial_s\big|_{s=t+h}T_s^{t+h}+\frac12 h^2 \partial_s^2\big|_{s=t+h}T_s^{t+h} -\frac16 h^3 \partial_s^3\big|_{s=u}T_s^{t+h} \;,\\
T_t^{t-h}&= \Ide - h \partial_s\big|_{s=t}T_t^{s}+\frac12 h^2 \partial_s^2\big|_{s=t}T_t^{s} -\frac16 h^3 \partial_s^3\big|_{s=v}T_t^{s}\;, 
\end{align*}
for some $u\in (t,t+h)$ and $v\in (t-h,t)$.

Taking into account the fact that $T_t^s\circ T_s^t=\Ide$ and taking first and second derivatives in $s$ of this identity at $s=t$ we readily deduce that 
\[\partial_s\big|_{s=t}T_s^t= - \partial_s\big|_{s=t} T_t^s=-v_t\;,\]
and 
\begin{align*}
\partial_s^2\big|_{s=t}T_s^t &= - \partial_s^2\big|_{s=t}T_t^s -2(D\partial_s\big|_{s=t}T_t^s\big)\partial_s\big|_{s=t}T_s^t\;.
\end{align*}
The last term evaluates to
\[
-2D\big(\partial_s\big|_{s=t}T_t^s\big)\partial_s\big|_{s=t}T_s^t = 2\big(D v_t\big)v_t = \nabla|v_t|^2\;.
\]
Collecting these observations, we obtain
\begin{align}\nonumber
\frac{T^{t+h}_t-2\Ide+T^{t-h}_t}{h^2}
&= 
\frac{1}{h}\Big[-\partial_s\big|_{s=t+h}T_s^{t+h}-\partial_s\big|_{s=t}T_t^{s}\Big]
+\frac{1}{2}\Big[\partial_s^2\big|_{s=t+h}T_s^{t+h} + \partial_s^2\big|_{s=t}T_t^{s}\Big] 
+O(h)\\\nonumber
&=
\frac{1}{h}(v_{t+h}-v_t) 
+\frac{1}{2}\Big[-\partial_s^2\big|_{s=t+h}T_{t+h}^s + \partial_s^2\big|_{s=t}T_t^{s} + \nabla|v_{t+h}|^2\Big] 
+O(h)\\\label{eq:2nd-diff-T}
&=
 \dot v_t + \frac{1}{2} \nabla |v_{t}|^2 +O(h)\;,
\end{align}
where the terms $O(h)$ are bounded by $Ch\|x\|$ for a constant $C$ independent of $t, h$ and $x$.
Hence,
\[
4\frac{\Wass^2(\mu_t,\Bary_{\mu_t}(\mu_{t-h},\mu_{t+h}))}{h^4} = \int_{\R^d}\big|\dot v_t+\frac{1}{2}\nabla|v_t|^2\big|^2\d\mu_t + \bigO(h)\;,
\]
with $\bigO(h)$ bounded by $Ch$ for a uniform constant $C$. Finally, recall the rectangular quadrature rule $\int_0^1f(t)\d t= K^{-1}\sum_{k=1}^{K-1}f(t^K_k)+\bigO(K^{-1})$ with $t_k^K\coloneqq k/K$ for a Lipschitz function $f$, where the implicit constant in the $\bigO(K^{-1})$ term depends only on ${\sf Lip}(f)$. Setting $h=K^{-1}$ and defining 
$\mu_k^K\coloneqq\mu_{t_k^K}$, we obtain for a uniform constant $C$:
\[
\left\vert\int_0^1\int_{\R^d}\vert \dot v_t+\frac{1}{2}\nabla\vert v_t\vert^2 \vert^2 \d\mu_t\d t-\SplineEnergy^K_G((\mu_0^K,\ldots,\mu_K^K))\right\vert
\leq C K^{-1}\;.
\]
The continuous spline energy on Gaussian distributions is given by
\begin{align*}
\int_0^1\int_{\R^d}\vert \dot v_t+\frac{1}{2}\nabla|v_t|^2\vert^2\d\mu_t\d t&=\int_0^1\int_{\R^d}\left\vert\left.\frac{\d^2}{\d h^2}\right\vert_{h=0}\left(T_t^{t+h}\right)\right\vert^2\d \mu_t\d t\\
&=\int_0^1\tr\left(\sigma_t^{-1}\left(\left.\frac{\d^2}{\d h^2}\right\vert_{h=0}(\sigma_t\sigma_{t+h}^2\sigma_t)^{\tfrac12}\right)^2\sigma_t^{-1}\right)\d t
+\int_0^1\vert\ddot{m}_t\vert^2\d t\\
&=\int_0^1\left\Vert\sigma_t^{-1}\left.\frac{\d^2}{\d h^2}\right\vert_{h=0}(\sigma_t\sigma_{t+h}^2\sigma_t)^{\tfrac12}\right\Vert_F^2\d t+\int_0^1\vert\ddot{m}_t\vert^2\d t\;,
\end{align*}
where we have used in the first step the expansion \eqref{eq:2nd-diff-T} and in the second step the decoupling of the energies from \eqref{eq:spline-energy-decoupling}.
The first equality in \eqref{eq:consistency_path} can be proven once again by using \eqref{eq:spline-energy-decoupling} and the explicit form of $v_t$. Finally, the corresponding estimates for \eqref{eq:consistency_path} can be proven similarly by Taylor approximation of the optimal map to first order. Namely, in place of \eqref{eq:one-term-spline} one uses that 
\[\frac{1}{h^2}\Wass^2(\mu_t,\mu_{t+h})=\int_{\R^d}\frac{|T_t^{t+h}-\Ide|^2}{h^2}\d\mu_t\;,\]
as well as $T_t^{t+h}=\Ide+h v_t +\bigO(h^2)$ with $\bigO(h^2)$ being bounded by $C\|x\|$ with a uniform constant $C$, so that
\[
\left\vert\int_0^1\int_{\R^d}\vert v_t \vert^2 \d\mu_t\d t-\PathEnergy^K((\mu_0^K,\ldots,\mu_K^K))\right\vert
\leq C K^{-1}\;.
\]
\end{proof}
\begin{rmrk}
We expect the previous consistency result for the generalised discrete spline energy to hold for general curves $(\mu_t)_t$ with suffuciently regular densities and velocity fields. In fact, we note that the argument relies on the Gaussian structure essentially only for the explicit error estimates in the Taylor expansion of the optimal maps. In particular, we expect the identity 
$\nabla_{v_t}v_t = \frac{d^2}{dh^2}|_{h=0}T_t^{t+h}$ to hold true for general sufficiently regular curves. However, obtaining a general consistency result for the discrete spline energy with the true barycenter seems much more delicate.
\end{rmrk}
To show the consistency of the proposed non-generalized discrete spline energy functional $\SplineEnergy^K((\mu_0^K,\ldots,\mu_K^K))$, we shall need the following lemmata (which are restricted to the case $d=2$), which relate the barycenter with the generalized barycenter:
\begin{lmm}\label{LemmaStdBar}
Let $m_1,m_2\in\R^2$ and $\sigma_1,\sigma_2\in\BWt$, such that $\Vert\sigma_1-\sigma_2\Vert_F\leq 2h\leq 2$. Then, one obtains
\begin{align*}
\left\Vert\std\left(\Bary(\mu_1,\mu_2)\right)-\frac{\sigma_1+\sigma_2}{2}\right\Vert_F\leq Ch^2,
\end{align*}
where $\mu_1=\mathcal{N}(m_1,\sigma_1^2)$, $\mu_2=\mathcal{N}(m_2,\sigma_2^2)$ and the constant $C$ only depends on 
$\max\{\lambda_{\max}(\sigma_1),\lambda_{\max}(\sigma_2)\}$ 
and $\min\{\lambda_{\min}(\sigma_1),\lambda_{\min}(\sigma_2)\}$, where $\lambda_{\max}(A)$ and $\lambda_{\min}(A)$ denote the largest and smallest eigenvalue of a symmetric matrix $A$, respectively. 
\end{lmm}
\begin{proof}
Recall that in $2$D, the positive definite square root of a positive definite matrix $\sigma=\begin{pmatrix}
a& c\\
c& b
\end{pmatrix}$ is given by the following explicit formula:
\begin{align*}
\sigma^{\tfrac12}=(\tr(\sigma)+2\sqrt{\det(\sigma)})^{-\tfrac12}(\sigma+\sqrt{\det(\sigma)}\Id).
\end{align*}
Indeed, 
\begin{align*}
(\tr(\sigma)+2\sqrt{\det(\sigma)})^{-1}(\sigma+\sqrt{\det(\sigma)}\Id)^2&=(a+b+2\sqrt{\det(\sigma)})^{-1}(\sigma^2+2\sigma\sqrt{\det(\sigma)}+\det(\sigma)\Id)\\
&=(a+b+2\sqrt{\det(\sigma)})^{-1}(\sigma+2\sqrt{\det(\sigma)}\Id+\det(\sigma)\sigma^{-1})\sigma\\
&=(a+b+2\sqrt{\det(\sigma)})^{-1}\left(\begin{pmatrix}a& c\\ c& b\end{pmatrix}+2\sqrt{\det(\sigma)}\Id+\begin{pmatrix}b& -c\\ -c& a\end{pmatrix}\right)\sigma\\
&=(a+b+2\sqrt{\det(\sigma)})^{-1}(a+b+2\sqrt{\det(\sigma)})\sigma=\sigma.
\end{align*}
Let us now define $\sigma\coloneqq\frac{\sigma_1+\sigma_2}{2}$, and $\sigma'\coloneqq\frac{\sigma_2-\sigma_1}{2h}$ and $\sigma_t=\sigma+t\sigma'$, for $t\in[-h,h]$. Moreover, we define $\mu_t\coloneqq\mathcal{N}(0,\sigma_t^2)$ and $F:\R\rightarrow\BWt;  t\mapsto F(t)=\std(\Bary(\mu_{-t},\mu_t))$, which implies $\std\left(\Bary(\mu_1,\mu_2)\right)=F(h)$. Since $F$ is actually smooth, we have by Taylor's theorem
\begin{align*}
F(h)=F(0)+\dot F(0)h+\tfrac12\ddot F(s)h^2,
\end{align*}
for some $s\in[0,h]$. It is straightforward to see that $F(0)=\std(\mu_0)=\frac{\sigma_1+\sigma_2}{2}$. Next, we prove that $\dot F(0)=0$. To this end, we show that if $\sigma:[0,1]\rightarrow\BWt; t\mapsto\sigma_t$ is $C^1$, with $\dot{\sigma}_t=0$ for some $t\in[0,1]$, then we have $\frac{\d}{\d t}\left[\sigma^{\tfrac12}_t\right]=0$. This is easily verified using the matrix square root formula above:
\begin{align*}
\frac{\d}{\d t}\left[\sigma^{\tfrac12}_t\right]=&-\tfrac12 (\tr(\sigma_t)+2\sqrt{\det(\sigma_t)})^{-\tfrac32}(\sigma_t+\sqrt{\det(\sigma_t)}\Id)(\tr(\dot{\sigma}_t)+\sqrt{\det(\sigma_t)}\tr(\dot{\sigma}_t\sigma^{-1}_t))\\
&+(\tr(\sigma_t)+2\sqrt{\det(\sigma_t)})^{-\tfrac12}(\dot{\sigma}_t+\tfrac12\sqrt{\det(\sigma_t)}\tr(\dot{\sigma}_t\sigma^{-1}_t)\Id)=0.
\end{align*}
Hence, it will be sufficient to prove $\left.\frac{\d}{\d t}\right\vert_{t=0}F^2(t)=0$. This is obviously true since $F^2$ is symmetric with respect to $t=0$. 
\begin{align*}
\ddot F(s)=\frac{\d^2}{\d s^2}(F^2)^{\tfrac12}(s)=\frac{\d^2}{\d s^2}\left(\tr(F^2(s))+2\sqrt{\det(F^2(s))}\right)^{-\tfrac12}(F^2(s)+\sqrt{\det(F^2(s))}\Id).
\end{align*}
Recall that 
\begin{align*}
F^2(s)=\tfrac14\left(\sigma_{-s}^2+\sigma_s^2+\sigma_s^{-1}(\sigma_s\sigma_{-s}^2\sigma_s)^{\tfrac12}\sigma_s+\sigma_s(\sigma_s\sigma_{-s}^2\sigma_s)^{\tfrac12}\sigma_s^{-1}\right).
\end{align*}
Due to the formula of the matrix square root for $2\times 2$ matrices, in order to find upper bounds of $\Vert\ddot F(s)\Vert_F$ it is enough to prove upper and (positive) lower bounds of $\Vert\sigma_s\Vert_F, \Vert\sigma_s^{-1}\Vert_F$, $\tr(\sigma_s)$, $\tr(\sigma_s^{-1})$, and $\det(\sigma_s)$ which are independent of $s\in(-h,h)$.
For a matrix $A\in\BWt$, we have the following sequence of inequalities:
\begin{align*}
\frac{1}{\sqrt{2}}\tr(A)=\frac{1}{\sqrt{2}}\tr(A\cdot\Id)\leq\frac{1}{\sqrt{2}}\Vert A\Vert_F\Vert\Id\Vert_F=\Vert A\Vert_F=\sqrt{\tr(A^TA)}=\sqrt{\tr(A^2)}=\sqrt{\lambda_1^2+\lambda_2^2}\leq\lambda_1+\lambda_2=\tr(A).
\end{align*}
Hence, it will suffice to prove upper and (positive) lower bounds of $\Vert\sigma_s\Vert_F, \Vert\sigma_s^{-1}\Vert_F$ and $\det(\sigma_s)$, independent of $s\in(-h,h)$. Indeed, note that for $s\in(-h,h)$, we have that
\begin{align*}
\sigma_s=\left(1-\frac{s+h}{2h}\right)\sigma_1+\frac{s+h}{2h}\sigma_2,
\end{align*}
is the sum of positive definite, symmetric matrices. By Weyl's inequality, we obtain that the smallest eigenvalue of $\sigma_s$, denoted as $\lambda_{\min}(\sigma_s)$ is bounded from below by $\left(1-\frac{s+h}{2h}\right)\lambda_{\min}(\sigma_1)+\frac{s+h}{2h}\lambda_{\min}(\sigma_2)$ which is bounded from below by $\xi\coloneqq\min(\lambda_{\min}(\sigma_1),\lambda_{\min}(\sigma_2))$, and is in particular independent of $s$. Similarly, one obtains $\lambda_{\max}(\sigma_s)\leq\zeta,$ where $\zeta\coloneqq\max(\lambda_{\max}(\sigma_1),\lambda_{\max}(\sigma_2))$. Hence, one finally obtains $\sqrt{2}\xi\leq\Vert\sigma_s\Vert\leq 2\zeta, \sqrt{2}\zeta^{-1}\leq\Vert\sigma_s^{-1}\Vert\leq 2\xi^{-1}$, $\xi^2\leq\det(\sigma_s)\leq\zeta^2$ for all $s\in[-h,h]$.
\end{proof}
\begin{rmrk}
This can be considered as the analogue (with a slight improvement on the exponent) of \cite{RuWi15}, Lemma 5.7.
\end{rmrk}
\begin{lmm}
Let $m_i\in\R^2$ and $\sigma_i\in\BWt$ for $i=1,2,3$ such that $\Vert\sigma_i-\sigma_2\Vert_F\leq 2h$ for $i=1,3$, and $\left\Vert\sigma_2-\frac{\sigma_3+\sigma_1}{2}\right\Vert_F\leq h^2$. Then, we can verify the estimates
\begin{align}
\Wass^2(\mu_2,\Bary_{\mu_2}(\mu_1,\mu_3))=\Wass^2(\mu_2,\Bary(\mu_1,\mu_3))+\mathcal{O}(h^5),
\end{align}
and
\begin{align}\label{eq:polar}
\Wass^2(\mu_2,\Bary_{\mu_2}(\mu_1,\mu_3))=\tfrac12 \Wass^2(\mu_1,\mu_2)+\tfrac12 \Wass^2(\mu_3,\mu_2)-\tfrac14 \Wass^2(\mu_1,\mu_3)+\mathcal{O}(h^5),
\end{align}
where $\mu_i\coloneqq\mathcal{N}(m_i,\sigma_i^2)$ for $i=1,2,3$.
\end{lmm}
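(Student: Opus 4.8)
The plan is to peel off the contributions of the means, which can be dealt with exactly, and to reduce both estimates to statements about the Bures--Wasserstein metric $B^2$ that are then verified by a Taylor expansion in $h$. First I would invoke Proposition~\ref{prop:gauss_facts}(2) to split every squared distance occurring in the statement as $\Wass^2(\mathcal N(m,\sigma^2),\mathcal N(m',\sigma'^2)) = |m-m'|^2 + B^2(\sigma,\sigma')$. By parts (3) and (4) of the same proposition, both $\Bary(\mu_1,\mu_3)$ and $\Bary_{\mu_2}(\mu_1,\mu_3)$ are Gaussians sharing the common mean $\tfrac12(m_1+m_3)$. Hence the mean contributions to the two sides of the first estimate coincide \emph{exactly}, while for \eqref{eq:polar} the mean parts reduce to the Euclidean polarization $|m_2-\tfrac12(m_1+m_3)|^2 = \tfrac12|m_1-m_2|^2+\tfrac12|m_3-m_2|^2-\tfrac14|m_1-m_3|^2$, which holds with no error. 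Thus both estimates reduce to purely covariance statements and it remains to control differences of terms $B^2(\sigma_i,\sigma_j)$ and $B^2(\sigma_2,\std(\Bary_{(\cdot)}))$ up to order $h^5$.

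For \eqref{eq:polar} I would exploit the explicit displacement structure of the generalized barycenter. Writing $T_2^i(x) = m_i + A_2^i(x-m_2)$ with the symmetric positive matrices $A_2^i := \sigma_2^{-1}(\sigma_2\sigma_i^2\sigma_2)^{1/2}\sigma_2^{-1}$ from Proposition~\ref{prop:gauss_facts}(1), Remark~\ref{rmrk:gen_bary} guarantees that $\tfrac12 T_2^1 + \tfrac12 T_2^3$ is the optimal map pushing $\mu_2$ onto $\Bary_{\mu_2}(\mu_1,\mu_3)$. Setting $u_i := T_2^i - \Ide$, this yields the \emph{exact} identity $\Wass^2(\mu_2,\Bary_{\mu_2}(\mu_1,\mu_3)) = \tfrac14\int|u_1+u_3|^2\d\mu_2 = \tfrac14\Wass^2(\mu_1,\mu_2)+\tfrac14\Wass^2(\mu_3,\mu_2)+\tfrac12\int u_1\cdot u_3\d\mu_2$, using $\int|u_i|^2\d\mu_2 = \Wass^2(\mu_i,\mu_2)$. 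Comparing with \eqref{eq:polar} and polarizing, the estimate is equivalent to $\int|u_1-u_3|^2\d\mu_2 = \Wass^2(\mu_1,\mu_3)+\mathcal{O}(h^5)$, whose covariance part is the matrix identity $\tr\big[(A_2^1-A_2^3)\sigma_2^2(A_2^1-A_2^3)\big] = B^2(\sigma_1,\sigma_3)+\mathcal{O}(h^5)$. Geometrically this says that transporting $\mu_1$ to $\mu_3$ through the intermediate $\mu_2$ costs the optimal amount up to $\mathcal{O}(h^5)$; both sides are $\mathcal{O}(h^2)$, so the point is to show they agree to three further orders in $h$.

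For the first estimate I would argue more geometrically. Let $\bar\mu := \Bary(\mu_1,\mu_3)$ be the ordinary midpoint barycenter. Since along the McCann geodesic the optimal maps from the midpoint to the two endpoints have exactly opposite displacements, one has $\tfrac12 T_{\bar\mu}^1 + \tfrac12 T_{\bar\mu}^3 = \Ide$ on $\supp\bar\mu$, hence the \emph{exact} identity $\Bary_{\bar\mu}(\mu_1,\mu_3) = \bar\mu$. Thus $\Bary_{\mu_2}(\mu_1,\mu_3)$ and $\Bary(\mu_1,\mu_3)$ differ only through the choice of base point, which by the previous Lemma together with the hypothesis $\|\sigma_2 - \tfrac12(\sigma_1+\sigma_3)\|_F\le h^2$ satisfies $\|\std(\bar\mu)-\sigma_2\|_F = \mathcal{O}(h^2)$. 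Since this base-point sensitivity vanishes when $\sigma_1=\sigma_3$ (the configuration is then symmetric and both barycenters equal the common endpoint), it carries a prefactor of order $\|\sigma_1-\sigma_3\|_F = \mathcal{O}(h)$, so that $\std(\Bary_{\mu_2}(\mu_1,\mu_3)) - \std(\Bary(\mu_1,\mu_3)) = \mathcal{O}(h^3)$. As both standard deviations lie within $\mathcal{O}(h^2)$ of $\sigma_2$, and $B^2$ is smooth with vanishing first variation and Hessian equal to the Bures metric $g_{\sigma_2}$ at coincidence, writing $B^2(\sigma_2,\sigma_2+E) = g_{\sigma_2}(E,E)+\mathcal{O}(|E|^3)$ and using bilinearity gives $B^2(\sigma_2,\std(\Bary_{\mu_2})) - B^2(\sigma_2,\std(\Bary)) = \mathcal{O}(h^3)\cdot\mathcal{O}(h^2)+\mathcal{O}(h^6) = \mathcal{O}(h^5)$, which is the first estimate.

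I expect the main obstacle to be the quantitative bookkeeping behind these reductions: verifying that the $\mathcal{O}(h^2)$, $\mathcal{O}(h^3)$ and $\mathcal{O}(h^4)$ contributions really cancel so that only an $\mathcal{O}(h^5)$ remainder survives. Both reductions hinge on expanding matrix square roots of the form $\sigma\mapsto(\sigma_2\sigma^2\sigma_2)^{1/2}$ and the Bures functional to third order in the perturbations $\sigma_i-\sigma_2=\mathcal{O}(h)$. The restriction to $d=2$ is what makes this feasible: the closed-form square-root formula used in the previous Lemma turns these expansions into finite, checkable computations, and the two hypotheses --- $\|\sigma_i-\sigma_2\|_F\le 2h$ and, crucially, $\sigma_1+\sigma_3 = 2\sigma_2+\mathcal{O}(h^2)$ --- are exactly what force the odd-order and the lower even-order terms to drop out. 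Splitting the relevant matrices (such as $A_2^1(A_2^3)^{-1}$, whose antisymmetric part measures the non-optimality of the composed map) into symmetric and antisymmetric parts is the technical device that organizes these cancellations.
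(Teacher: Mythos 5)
There is nothing in the paper to compare you against line by line: the authors explicitly omit the proof, calling it ``a straightforward but lengthy extension of the previous lemma'', i.e.\ a brute-force Taylor expansion built on the explicit $2\times 2$ square-root formula. Measured against that, your route is genuinely different and, for the \emph{first} estimate, essentially complete. Your exact reductions are all correct: the mean contributions split off exactly via Proposition~\ref{prop:gauss_facts}; since $\tfrac12 T_2^1+\tfrac12 T_2^3$ is the Brenier map pushing $\mu_2$ onto $\Bary_{\mu_2}(\mu_1,\mu_3)$ (Remark~\ref{rmrk:gen_bary}), the identity $\Wass^2(\mu_2,\Bary_{\mu_2}(\mu_1,\mu_3))=\tfrac14\int\vert u_1+u_3\vert^2\d\mu_2$ with $u_i=T_2^i-\Ide$ is exact and reduces \eqref{eq:polar} to $\int\vert u_1-u_3\vert^2\d\mu_2=\Wass^2(\mu_1,\mu_3)+\mathcal{O}(h^5)$; and the observation $\Bary_{\bar\mu}(\mu_1,\mu_3)=\bar\mu$ for the McCann midpoint $\bar\mu=\Bary(\mu_1,\mu_3)$ is exact and turns the first estimate into a pure base-point perturbation. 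The ensuing chain --- $\Vert\std(\bar\mu)-\sigma_2\Vert_F=\mathcal{O}(h^2)$ from the previous lemma plus the midpoint hypothesis, base-point sensitivity of order $\Vert\sigma_1-\sigma_3\Vert_F=\mathcal{O}(h)$ because the generalized barycenter is base-point independent when $\sigma_1=\sigma_3$, then $g_{\sigma_2}(E_1-E_2,E_1+E_2)=\mathcal{O}(h^3)\cdot\mathcal{O}(h^2)$ plus cubic remainders of size $\mathcal{O}(h^6)$ --- is sound, and needs only the routine quantification of smoothness and positive-definiteness constants on compacta, a level of rigor the paper itself does not exceed.

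The genuine gap is in \eqref{eq:polar}: your proposal stops exactly where the content of that estimate begins. Every identity you invoke there is exact and holds \emph{without either hypothesis}; the whole claim is concentrated in the unverified matrix statement
\begin{align*}
\tr\bigl[(A_2^1-A_2^3)\sigma_2^2(A_2^1-A_2^3)\bigr]=B^2(\sigma_1,\sigma_3)+\mathcal{O}(h^5),
\qquad A_2^i\coloneqq\sigma_2^{-1}(\sigma_2\sigma_i^2\sigma_2)^{\tfrac12}\sigma_2^{-1},
\end{align*}
equivalently $\tr\bigl[A_2^1\sigma_2^2A_2^3\bigr]=\tr\bigl[(\sigma_1\sigma_3^2\sigma_1)^{\tfrac12}\bigr]+\mathcal{O}(h^5)$, which you assert but never establish. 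This is not a soft remainder estimate that smoothness alone delivers: normalizing $\sigma_2=\Id$ and writing $\sigma_1=\Id+hX$, $\sigma_3=\Id+hY$ with $X,Y$ symmetric, an expansion of the square root shows the left-minus-right difference equals $\tfrac14 h^4\Vert XY-YX\Vert_F^2+\mathcal{O}(h^5)$, so for non-commuting perturbations the discrepancy is genuinely of order $h^4$ (this is the nonnegative sectional curvature of the Bures--Wasserstein geometry that you allude to). The hypothesis $\Vert\sigma_2-\tfrac12(\sigma_1+\sigma_3)\Vert_F\le h^2$ forces $Y=-X+\mathcal{O}(h)$ and hence kills the commutator term, but seeing this requires carrying the expansion to fourth order --- precisely the ``lengthy'' computation the paper also omits, and which is absent from your proposal. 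As it stands, you have proven the first estimate by a cleaner argument than the one the paper gestures at, but you have only reformulated, not proven, the second.
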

This lemma can be proven fully analogously to the previous one. However, as one needs to expand the terms up to the fifth order, the computations become extremely lengthy, so we will leave out the explicit computations and simply give a sketch of the proof:

Let $(\sigma(t))_{t\in[-h,h]}$ be the uniquely determined second-order $\BWt$-valued curve, such that $\sigma(-h)=\sigma_1$, $\sigma(0)=\sigma_2$ and $\sigma(h)=\sigma_3$, and let $(\mu_t)_{t\in[-h,h]}$ be the respective measure-valued curve.  Then, define
\begin{align*}
F_1(t)&\coloneqq\Wass^2(\mu_2,\Bary_{\mu_2}(\mu_{-t},\mu_t)),\\
F_2(t)&\coloneqq\Wass^2(\mu_2,\Bary(\mu_{-t},\mu_t)),\\
F_3(t)&\coloneqq\tfrac12 \Wass^2(\mu_{-h},\mu_2)+\tfrac12 \Wass^2(\mu_h,\mu_2)-\tfrac14 \Wass^2(\mu_{-h},\mu_h).
\end{align*}
Then, one has $F_1(h)=\Wass^2(\mu_2,\Bary_{\mu_2}(\mu_{1},\mu_3))$, $F_2(h)=\Wass^2(\mu_2,\Bary(\mu_{1},\mu_3))$, and $F_3(h)= \tfrac12 \Wass^2(\mu_{1},\mu_2)+\tfrac12 \Wass^2(\mu_3,\mu_2)-\tfrac14 \Wass^2(\mu_{1},\mu_3).$ Now, we can explicitly compute any derivatives of the $F_i$, and expand them at $t=0$ up to the fifth order, i.e. 
\begin{align*}
F_i(h)=\sum_{k=0}^4\frac{1}{k!} F_i^{(k)}(0)h^k+\frac{1}{5!}F_i^{(5)}(s_i)h^5,
\end{align*}
for some $s_i\in(0,h).$ Now, one checks that all derivatives up to the third order vanish for $i=1,2,3$. The zeroth-order derivative vanishing is trivial, while the first and third order derivatives vanish at $0$ due to the symmetry of the derivatives of lesser order. For the fourth order derivatives, one checks that they all coincide for $i=1,2,3$. Now, it remains to show that the fifth-order derivative can be bounded by a constant independent of $h$. To this end, one uses Lemma \ref{LemmaStdBar} and the same estimation strategy as inside the proof of 
this lemma.
\begin{rmrk}
Let $t\mapsto(m_t,\sigma_t)$ for $t\in[0,1]$ be a curve in $\R^d\times\BW$, such that $(\sigma_t)_{t\in[0,1]}$ is simultaneously diagonalizable. After choosing a common diagonal basis, one may without loss of generality regard $(\sigma_t)_{t\in[0,1]}$ as a curve in $\R^d\times\BWd$ instead.
\end{rmrk}
\subsection{The case of Gaussian distributions with diagonal covariance matrices}
\begin{dfntn}
Let $\Phi$ be the map from Definition \ref{def:gauss_space}, i.e. $\Phi(m,\sigma)=\mathcal{N}(m,\sigma^2)$ for $(m,\sigma)\in\R^d\times\BW$. Then, we define $\ProbWGd\coloneqq\Phi(\R^d\times\BWd)$ as the space of non-degenerate Gaussian distributions with diagonal covariance matrices. 
\end{dfntn}
\begin{crllr}
Let $(m_t, \sigma_t)_t$ be a curve in $C^3([0,1],\R^d\times\BWd)$, and let $(\mu_t)_t\coloneqq\mathcal{N}(m_t,\sigma_t^2)$ be the respective $\ProbWGd$-valued curve. Then, we have 
\begin{align}\label{eq:diag_path_energy}
\pathenergy((\mu_t)_t)&=\int_0^1\left\Vert\dot{\sigma}_t\right\Vert_F^2\d t+\int_0^1\vert\dot{m}_t\vert^2\d t,\\
\label{eq:diag_spline_energy}\splineenergy((\mu_t)_t)&=\int_0^1\left\Vert\ddot{\sigma}_t\right\Vert_F^2\d t+\int_0^1\vert\ddot{m}_t\vert^2\d t,
\end{align}
where $\splineenergy$ is the spline energy on the space of diagonal Gaussian distributions $\Phi(\R^d\times\BWd)\subset\ProbW(\R^d)$. 
\end{crllr}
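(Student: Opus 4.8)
The plan is to derive this corollary directly from the preceding Consistency Proposition by specializing its closed-form expressions (the first equalities in \eqref{eq:consistency_path} and \eqref{eq:consistency_spline}) to the diagonal setting. Since $\BWd\subset\BW$, that proposition applies verbatim to the curve $(m_t,\sigma_t)_t\in C^3([0,1],\R^d\times\BWd)$, and the mean contributions $\int_0^1\vert\dot m_t\vert^2\d t$ and $\int_0^1\vert\ddot m_t\vert^2\d t$ already coincide with the terms claimed here. Hence everything reduces to simplifying the two covariance integrands $\sigma_t^{-1}\left.\frac{\d}{\d h}\right\vert_{h=0}(\sigma_t\sigma_{t+h}^2\sigma_t)^{1/2}$ and $\sigma_t^{-1}\left.\frac{\d^2}{\d h^2}\right\vert_{h=0}(\sigma_t\sigma_{t+h}^2\sigma_t)^{1/2}$ under the diagonal hypothesis.

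The key observation is that diagonal matrices commute. Since $\sigma_t,\sigma_{t+h}\in\BWd$, we have $\sigma_t\sigma_{t+h}^2\sigma_t=\sigma_t^2\sigma_{t+h}^2$, a diagonal matrix whose $i$-th diagonal entry is $((\sigma_t)_{ii}(\sigma_{t+h})_{ii})^2$. Its unique positive definite square root is computed entrywise, and because each factor $(\sigma_t)_{ii}$ and $(\sigma_{t+h})_{ii}$ is positive, this yields the clean identity $(\sigma_t\sigma_{t+h}^2\sigma_t)^{1/2}=\sigma_t\sigma_{t+h}$.

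With this identity the derivatives become elementary. Treating $\sigma_t$ as a constant factor in $h$ and using the $C^3$ regularity to differentiate $h\mapsto\sigma_{t+h}$, I would compute $\left.\frac{\d}{\d h}\right\vert_{h=0}(\sigma_t\sigma_{t+h})=\sigma_t\dot\sigma_t$ and $\left.\frac{\d^2}{\d h^2}\right\vert_{h=0}(\sigma_t\sigma_{t+h})=\sigma_t\ddot\sigma_t$. Multiplying by $\sigma_t^{-1}$ cancels the leading factor, so that $\sigma_t^{-1}\left.\frac{\d}{\d h}\right\vert_{h=0}(\sigma_t\sigma_{t+h}^2\sigma_t)^{1/2}=\dot\sigma_t$ and, analogously, $\sigma_t^{-1}\left.\frac{\d^2}{\d h^2}\right\vert_{h=0}(\sigma_t\sigma_{t+h}^2\sigma_t)^{1/2}=\ddot\sigma_t$. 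Substituting these into \eqref{eq:consistency_path} and \eqref{eq:consistency_spline} produces exactly \eqref{eq:diag_path_energy} and \eqref{eq:diag_spline_energy}.

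I do not expect a genuine obstacle, since the corollary is a computational specialization of the general Gaussian case. The only step meriting care is the justification that the positive definite square root of the product coincides with $\sigma_t\sigma_{t+h}$; this is precisely where the diagonal (equivalently, simultaneously diagonalizable) hypothesis is indispensable, as for non-commuting $\sigma_t,\sigma_{t+h}\in\BW$ one has $(\sigma_t\sigma_{t+h}^2\sigma_t)^{1/2}\neq\sigma_t\sigma_{t+h}$ in general and the energies retain the less explicit form of the Consistency Proposition.
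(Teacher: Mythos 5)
Your proposal is correct and follows essentially the same route as the paper's proof: both invoke the first equalities of the Consistency Proposition and then simplify the integrands using the fact that diagonal matrices commute, so that $(\sigma_t\sigma_{t+h}^2\sigma_t)^{1/2}=\sigma_t\sigma_{t+h}$ and hence $\sigma_t^{-1}\left.\frac{\d^j}{\d h^j}\right\vert_{h=0}(\sigma_t\sigma_{t+h}^2\sigma_t)^{1/2}$ reduces to $\dot\sigma_t$ and $\ddot\sigma_t$ for $j=1,2$. Your explicit justification of the entrywise square-root identity is a welcome elaboration of a step the paper leaves implicit, but the argument is otherwise identical.
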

\begin{proof}
Using eq. \eqref{eq:consistency_path} and \eqref{eq:consistency_spline}, and assuming $\sigma_t\in\BWd$ for all $t\in[0,1]$, we have
\begin{align*}
\sigma_t^{-1}\left.\frac{\d}{\d h}\right\vert_{h=0}(\sigma_t\sigma_{t+h}^2\sigma_t)^{\tfrac12}=\sigma_t^{-1}\left.\frac{\d}{\d h}\right\vert_{h=0}\sigma_{t+h}\sigma_t=\left.\frac{\d}{\d h}\right\vert_{h=0}\sigma_{t+h}=\dot{\sigma}_t,\\
\sigma_t^{-1}\left.\frac{\d^2}{\d h^2}\right\vert_{h=0}(\sigma_t\sigma_{t+h}^2\sigma_t)^{\tfrac12}=\sigma_t^{-1}\left.\frac{\d^2}{\d h^2}\right\vert_{h=0}\sigma_{t+h}\sigma_t=\left.\frac{\d^2}{\d h^2}\right\vert_{h=0}\sigma_{t+h}=\ddot{\sigma}_t,
\end{align*}
which proves the claim. 
\end{proof}
Alternatively, one can "brute-force" this:
\begin{xmpl}
Let $U\subseteq\R^n$, $V\subseteq\R^m$ be open subsets. Then, $H^k(U,V)\coloneqq W^{k,2}(U,V)$ denotes the Sobolev space of functions $f:U\rightarrow V$, such that $f$ and its weak derivatives up to order $k$ have finite $L^2$-norm. Let $t\mapsto(m_t,\sigma_t)$ for $t\in(0,1)$ be a curve in $H^2((0,1),\R^d\times\BWd)$. Then, by abusing notation we can define $G_t\coloneqq(2\pi)^{-\frac{d}{2}}\det(\sigma_t)^{-1}e^{-\tfrac12 (x-m_t)^T\sigma_t^{-2}(x-m_t)}$, i.e. the Lebesgue density function of $\mu_t\coloneqq\mathcal{N}(m_t,\sigma_t^2)$. We have 
\begin{align*}
\partial_tG_t&=\left[-\tr(\sigma_t^{-1}\dot{\sigma}_t)+\langle \dot{m}_t, \sigma_t^{-2}(x-m_t)\rangle+\langle x-m_t,\sigma_t^{-3}\dot{\sigma}_t(x-m_t)\rangle\right]G_t,\\
\nabla G_t&=\left[-\sigma_t^{-2}(x-m_t)\right]G_t.
\end{align*}
Take $\varphi_t(x) := \langle x,\dot{m}_t\rangle+\tfrac12\langle x-m_t,\dot{\sigma}_t\sigma_t^{-1}(x-m_t)\rangle$. Then, we obtain
\begin{align}\label{eq:velocity_gauss}
\nabla\varphi_t &= \dot{m}_t+\dot{\sigma}_t\sigma_t^{-1}(x-m_t),\\
\Delta\varphi_t &= \tr(\dot{\sigma}_t\sigma_t^{-1}).\nonumber
\end{align}
Thus, the pair $(\mu_t, v_t)$ with $v_t=\nabla\varphi_t$ satisfies (CE), i.e.
\begin{align*}
\partial_t\mu_t+\nabla\cdot(\nabla\varphi_t\mu_t)=\partial_t\mu_t+\Delta\varphi_t\mu_t+\nabla\varphi_t\cdot\nabla\mu_t\equiv 0.
\end{align*}
Moreover, we have that
\begin{align*}
\nabla\dot{\varphi}_t &= \ddot{m}_t+\ddot{\sigma}_t\sigma_t^{-1}(x-m_t)-\dot{\sigma}_t\sigma^{-1}_t\dot{m}_t-\dot{\sigma}_t^{2}\sigma^{-2}_t(x-m_t),\\
\tfrac12\nabla\vert\nabla\varphi_t\vert^2&=\nabla^2\varphi_t\nabla\varphi_t=\dot{\sigma}_t\sigma_t^{-1}\left(\dot{m}_t+\dot{\sigma}_t\sigma_t^{-1}(x-m_t)\right),
\end{align*}
and hence we finally compute
\begin{align*}
\nabla\left(\dot{\varphi}_t+\tfrac12\vert\nabla\varphi_t\vert^2\right)=
& \ \ddot{m}_t+\ddot{\sigma}_t\sigma_t^{-1}(x-m_t)-\dot{\sigma}_t\sigma^{-1}_t\dot{m}_t-\dot{\sigma}_t^{2}\sigma^{-2}_t(x-m_t)\\
&+\dot{\sigma}_t\sigma_t^{-1}\left(\dot{m}_t+\dot{\sigma}_t\sigma_t^{-1}(x-m_t)\right)=\ddot{m}_t+\ddot{\sigma}_t\sigma_t^{-1}(x-m_t).
\end{align*}
The continuous spline energy is then given by
\begin{align}\label{eq:spline_gauss_general}
\mathcal{F}(\mu)=\int_0^1\!\int_{\mathbb{R}^N}\vert\ddot{m}_t+\ddot{\sigma}_t\sigma_t^{-1}(x-m_t)\vert^2\d \mu_t\d t=\int_0^1\!\int_{\mathbb{R}^N}\vert\ddot{m}_t\vert^2+\vert\ddot{\sigma}_t\sigma_t^{-1}(x-m_t)\vert^2\d \mu_t\d t=\int_0^1\vert\ddot{m}_t\vert^2+\tr(\ddot{\sigma}_t^2)\d t,
\end{align}
where in the second equality we used the fact that the $\mu_t$-integral of an antisymmetric function (with respect to $m_t$) vanishes, and $\int_{\R^d}\vert A(x-m_t)\vert^2\d \mu_t=\tr(A\sigma_t^2A^T)$ in the last equality (cf. proof of Proposition \ref{prop:consistency}). 
One can simplify the above expression even further:
\begin{align}\label{eq:spline_gauss}
\mathcal{F}(\mu)=\int_0^1\vert\ddot{m}_t\vert^2\d t+\sum_{j=1}^d\int_0^1\vert\ddot{\lambda}_t^j\vert^2\d t,
\end{align}
where $(\lambda_t^j)_{j=1,\ldots,d}$ are the eigenvalues of $\sigma_t$. In view of \eqref{eq:spline_gauss}, one might be tempted to assert that a spline interpolation on the space of Gaussian distributions with simultaneously diagonalizable covariances can be obtained by spline interpolating each eigenvalue independently (after choosing a fixed common eigenbasis). However, this assumption ignores the restriction that $\sigma_t$ is required to be positive definite. Indeed, the spline interpolation of some given Gaussian key-frames amounts to solving the following minimization problem
\begin{align*}
\inf_{m_t\in\R^d,\lambda_t^j>0} \int_0^1\vert\ddot{m}_t\vert^2\d t+\sum_{j=1}^d\int_0^1\vert\ddot{\lambda}_t^j\vert^2\d t,
\end{align*}
together with some point-wise evaluation constraints. If the above minimization problem has a solution, then the spline interpolation results from a classical cubic spline interpolation of $m_t$ and $\lambda_t^j$. The positivity constraint, however, implies that whenever the interpolating eigenvalue-spline becomes negative, it can not coincide with the Wasserstein E-spline.
\end{xmpl}
The above equation \eqref{eq:spline_gauss_general} allows us to canonically identify any $\ProbWGd$-valued functional $\splineenergy$ with a $(\R^d\times\BWd)$-valued functional $\splineenergyt$ via $$\splineenergyt((m_t,\sigma_t)_t)\coloneqq\splineenergy((\mathcal{N}(m_t,\sigma_t^2))_t).$$ \begin{lmm}\label{lmm:continuity_prop}
The regularized spline energy $\splineenergyt^{\delta}$  is lower semi-continuous under weak and continuous under strong convergence in $H^2((0,1),\R^d\times\BWd)$. 
\end{lmm}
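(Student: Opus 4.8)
The plan is to use the explicit formulas of the Corollary (equations \eqref{eq:diag_path_energy} and \eqref{eq:diag_spline_energy}) to express $\splineenergy^\delta$ as an elementary quadratic functional on the Hilbert space $H^2((0,1);\R^d\times\BWd)$, and then to invoke the standard weak lower semi-continuity and strong continuity of squared Hilbert-space norms composed with bounded linear operators.

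First I would invoke the identification $\splineenergyt((m_t,\sigma_t)_t)=\splineenergy(\mathcal{N}(m_t,\sigma_t^2))$ together with \eqref{eq:diag_path_energy}--\eqref{eq:diag_spline_energy}, so that the regularized energy, written $\splineenergyt^\delta$ on the parameter space, becomes
\begin{align*}
\splineenergyt^\delta((m_t,\sigma_t)_t)=\int_0^1\Bigl(|\ddot m_t|^2+\Vert\ddot\sigma_t\Vert_F^2\Bigr)\d t+\delta\int_0^1\Bigl(|\dot m_t|^2+\Vert\dot\sigma_t\Vert_F^2\Bigr)\d t.
\end{align*}
Writing $\gamma_t:=(m_t,\sigma_t)$ and equipping $\R^d\times\BWd$ with the product of the Euclidean and Frobenius inner products, this is exactly $\Vert\ddot\gamma\Vert_{L^2}^2+\delta\Vert\dot\gamma\Vert_{L^2}^2=\Vert L\gamma\Vert_{L^2}^2$, where $L\colon H^2((0,1);\R^d\times\BWd)\to L^2\times L^2$, $L\gamma:=(\ddot\gamma,\sqrt\delta\,\dot\gamma)$, is a bounded linear operator. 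A point worth stressing is that, because the covariances are diagonal, the factors $\sigma_t^{-1}$ appearing in the general density \eqref{eq:consistency_spline} cancel; the reduced density thus contains no inverse of $\sigma_t$ and is a genuine continuous quadratic form with no singular behaviour near the boundary of the positive-definite cone.

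Given this representation both claims follow from Hilbert-space theory. For strong continuity, if $\gamma^{(n)}\to\gamma$ strongly in $H^2$ then $L\gamma^{(n)}\to L\gamma$ strongly in $L^2\times L^2$ because $L$ is bounded, so $\Vert L\gamma^{(n)}\Vert^2\to\Vert L\gamma\Vert^2$, i.e. $\splineenergyt^\delta(\gamma^{(n)})\to\splineenergyt^\delta(\gamma)$. For weak lower semi-continuity, if $\gamma^{(n)}\rightharpoonup\gamma$ weakly in $H^2$ then, $L$ being bounded and linear and hence weak-weak continuous, $L\gamma^{(n)}\rightharpoonup L\gamma$ weakly in $L^2\times L^2$; the squared norm is convex and strongly continuous, hence weakly lower semi-continuous, so $\Vert L\gamma\Vert^2\le\liminf_n\Vert L\gamma^{(n)}\Vert^2$, which is precisely the assertion. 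Throughout I would use the identification of weak (resp. strong) convergence in $H^2((0,1);\ProbWGd)$ with that in $H^2((0,1);\R^d\times\BWd)$ recorded immediately before the lemma.

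I do not expect a genuine obstacle here. The only step requiring a little care is confirming, via the Corollary, that on diagonal covariances the spline energy collapses to $\int_0^1\Vert\ddot\sigma_t\Vert_F^2\,\d t$ with no surviving $\sigma_t^{-1}$ factor; this is exactly what guarantees that $\splineenergyt^\delta$ is a bona fide continuous quadratic form on all of $H^2$ rather than merely on the interior of the cone, and hence that the elementary weak/strong continuity arguments apply verbatim.
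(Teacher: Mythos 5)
Your proposal is correct and follows essentially the same route as the paper: both reduce $\splineenergyt^\delta$ via the diagonal-Gaussian formulas to a quadratic (semi-norm type) functional on the Hilbert space $H^2((0,1);\R^d\times\BWd)$ and then invoke weak lower semi-continuity of such functionals together with norm convergence under strong convergence. Your write-up via the bounded linear operator $L\gamma=(\ddot\gamma,\sqrt{\delta}\,\dot\gamma)$ is simply a more explicit rendering of the paper's one-line observation that the energy coincides with an $H^2$-type squared semi-norm.
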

\begin{proof}
First, let us note that by \eqref{eq:diag_path_energy} and \eqref{eq:diag_spline_energy}, for any curve $(m_t, \sigma_t)_{t\in[0,1]}$ the path energy $\pathenergyt$ and the spline energy $\splineenergyt$ coincide with the squared semi-norms $\vert\cdot\vert_{H^1}^2$ and $\vert \cdot\vert^2_{H^2}$, respectively, both of which are weakly lower semi-continuous on $H^2$. Thus, $$\liminf_{n\rightarrow\infty}\splineenergyt^\delta((m_t^{(n)},\sigma_t^{(n)})_t)\geq\splineenergyt^\delta((m_t,\sigma_t)_t),$$ for a $H^2((0,1);\R^d\times\BWd)$-weakly convergent sequence $(m_t^{(n)},\sigma_t^{(n)})_t\rightharpoonup(m_t,\sigma_t)_t$. Moreover, if the sequence converges strongly we also obtain $\liminf_{n\rightarrow\infty}\splineenergyt^\delta((m_t^{(n)},\sigma_t^{(n)})_t)=\splineenergyt^\delta((m_t,\sigma_t)_t)$, since strong convergence implies convergence in norm.
\end{proof}
\section{Convergence of discrete Gaussian E-splines}\label{sec:mosco_convergence}
In this section we will discuss the convergence of discrete Gaussian spline curves to continuous Gaussian E-splines. For the sake of presentation, we will at first only consider centered Gaussian curves, i.e. $m_t=0$ for all $t\in[0,1]$. As the energy of the mean and standard deviation matrices decouples (cf. equations \eqref{eq:spline-energy-decoupling} and \eqref{eq:diag_spline_energy}), this is a very natural approach: for the general non-centered case, one can directly apply the results from \cite{HeRuWi18} to obtain Mosco convergence (cf. \cite{Mo69}) of the mean term of the functionals. As the space of standard deviation matrices $\BWd$ has a non-trivial boundary the results of \cite{HeRuWi18} do not apply to the standard deviation matrix term. Nevertheless we follow the general procedure of the proof in this paper.

In the sequel, we will focus on natural boundary conditions with a comment on the periodic case below. We will now use a suitable interpolation to identify discrete curves with continuous ones to be able to rewrite the discrete energy as a functional on time-continuous curves. 
As in \cite{HeRuWi18} and \cite{JuRaRu23}, this will be done via cubic Hermite interpolation at time interval midpoints. 
For a tuple $\bm\sigma^K\coloneqq(\sigma_0,\ldots,\sigma_K)\in(\BWd)^{K+1}$, we define the temporal extension $\eta_{\bm\sigma^K}$ of $\bm\sigma^K$ as
\begin{align*}
\eta_{\bm\sigma^K}(t)\coloneqq\begin{cases}\sigma_0+(\sigma_1-\sigma_0)Kt, \ \ \ t\in[0,t^K_{1/2}],\\
\frac{\sigma_{k-1}+\sigma_k}{2}+(\sigma_k-\sigma_{k-1})K(t-t^K_{k-1/2})+(\sigma_{k+1}-2\sigma_k+\sigma_{k-1})K^{2}\frac{(t-t^K_{k-1/2})^2}{2}, \ \ \  t\in[t^K_{k-1/2},t^K_{k+1/2}],\\
\sigma_{K-1}+(\sigma_K-\sigma_{K-1})K(t-t^K_{K-1}), \ \ \ t\in[t^K_{K-1/2},1],
\end{cases}
\end{align*}
where $t^K_{k+1/2}\coloneqq\frac{k+1/2}{K}$ for $k=0,\ldots,K-1$.
For periodic boundary conditions, we can neglect the definition on the starting and final half-intervals, identifying $[t^K_{K-1/2},t^K_{K+1/2}]$ with $[0,t^K_{1/2}]\cup[t^K_{K-1/2},1]$. 

Next, we recall  the following convergence of a piecewise cubic Hermite 
interpolation from 
\cite[Lemma 4.3]{HeRuWi18}.
\begin{lmm}[Strong convergence of piecewise cubic Hermite curves to smooth Gaussian curves]\label{lmm:strong_conv_pw_bezier} 
Let $\sigma=(\sigma(t))_{t\in[0,1]}$ be a $C^3$ curve in $\BWd$, and define $\bm\sigma^K\coloneqq(\sigma_j^K)_{j=0,\ldots,K}=(\sigma(j/K))_{j=0,\ldots,K}$, i.e. $\bm\sigma^K$ is an equidistant sampling of the continuous curve $\sigma$ with $K+1$ samples. Then, $\eta_{\bm\sigma^K}$ converges strongly in $H^2([0,1],\BWd)$ to $\sigma$ for $K\rightarrow\infty$. 
\end{lmm}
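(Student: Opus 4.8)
The plan is to work directly with $\eta_{\bm\sigma^K}$ as defined in the displayed formula and to reduce the claim to a single $L^2$-convergence statement for the second derivative. Since $\BWd$ consists of positive-definite \emph{diagonal} matrices and every branch of the definition of $\eta_{\bm\sigma^K}$ is an affine combination of the samples $\sigma_j^K$, the interpolant acts entrywise on the diagonal and the Frobenius $H^2$-norm decouples into the $H^2$-norms of the scalar diagonal entries. I would therefore fix one diagonal entry, regard $\sigma$ as a scalar $C^3$ curve on $[0,1]$ that is bounded away from $0$, and prove $\|\eta_{\bm\sigma^K}-\sigma\|_{H^2([0,1])}\to 0$ for it; summing over the finitely many entries then gives the matrix statement. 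Reading off the definition, $\eta_{\bm\sigma^K}\in C^1([0,1])$ with $\eta_{\bm\sigma^K}'$ piecewise affine and $\eta_{\bm\sigma^K}''$ piecewise constant and bounded, so $\eta_{\bm\sigma^K}\in H^2$; the uniform convergence established below, together with $\sigma([0,1])\Subset\BWd$, guarantees that $\eta_{\bm\sigma^K}$ takes values in $\BWd$ for $K$ large, so the codomain is correct.

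The crux is the convergence $\|\eta_{\bm\sigma^K}''-\ddot\sigma\|_{L^2([0,1])}\to 0$. Reading $\eta_{\bm\sigma^K}''$ off the definition, on each interior cell $[t^K_{k-1/2},t^K_{k+1/2}]$ it equals the constant $K^2(\sigma_{k+1}^K-2\sigma_k^K+\sigma_{k-1}^K)$, i.e. the scaled second central difference of $\sigma$ at $t^K_k=k/K$, while on the two boundary half-cells $[0,t^K_{1/2}]$ and $[t^K_{K-1/2},1]$ the branch is affine so $\eta_{\bm\sigma^K}''=0$. On the interior cells I would Taylor-expand $\sigma(t^K_k\pm\tfrac1K)$ with Lagrange remainder: because $\sigma\in C^3$ one gets $K^2(\sigma_{k+1}^K-2\sigma_k^K+\sigma_{k-1}^K)=\ddot\sigma(t^K_k)+\bigO(K^{-1})$ uniformly in $k$, and since $\ddot\sigma$ is uniformly continuous one also has $|\ddot\sigma(t^K_k)-\ddot\sigma(t)|=\bigO(K^{-1})$ for $t$ in the cell; hence $\|\eta_{\bm\sigma^K}''-\ddot\sigma\|_{L^\infty(\text{interior})}=\bigO(K^{-1})$ and a fortiori the interior $L^2$-error is $\bigO(K^{-1})$. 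On the two boundary half-cells, whose total length is $K^{-1}$, the error is simply $\ddot\sigma$ itself, contributing at most $K^{-1}\|\ddot\sigma\|_{L^\infty}^2$ to $\|\eta_{\bm\sigma^K}''-\ddot\sigma\|_{L^2}^2$, which vanishes as $K\to\infty$. I also record the endpoint data: directly from the boundary branch, $\eta_{\bm\sigma^K}(0)=\sigma_0^K=\sigma(0)$ exactly and $\eta_{\bm\sigma^K}'(0)=K(\sigma_1^K-\sigma_0^K)=\dot\sigma(0)+\bigO(K^{-1})\to\dot\sigma(0)$.

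To assemble the $H^2$-convergence I would use the Taylor-with-integral-remainder representation, valid for the $H^2$ function $\eta_{\bm\sigma^K}-\sigma$,
\begin{align*}
(\eta_{\bm\sigma^K}-\sigma)(t)=\big(\eta_{\bm\sigma^K}(0)-\sigma(0)\big)+t\big(\eta_{\bm\sigma^K}'(0)-\dot\sigma(0)\big)+\int_0^t(t-s)\big(\eta_{\bm\sigma^K}''(s)-\ddot\sigma(s)\big)\d s,
\end{align*}
together with its once-differentiated analogue for $\eta_{\bm\sigma^K}'-\dot\sigma$. Estimating both by Cauchy--Schwarz, the right-hand sides are controlled by $|\eta_{\bm\sigma^K}(0)-\sigma(0)|+|\eta_{\bm\sigma^K}'(0)-\dot\sigma(0)|+\|\eta_{\bm\sigma^K}''-\ddot\sigma\|_{L^2}$, which tends to $0$ by the previous paragraph; this yields $\|\eta_{\bm\sigma^K}-\sigma\|_{L^\infty}\to0$ and $\|\eta_{\bm\sigma^K}'-\dot\sigma\|_{L^\infty}\to0$, hence convergence of the zeroth- and first-order terms in $L^2$. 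Combined with $\|\eta_{\bm\sigma^K}''-\ddot\sigma\|_{L^2}\to0$ this is exactly strong $H^2$-convergence, and the uniform $L^\infty$-control is what justifies that $\eta_{\bm\sigma^K}$ stays in $\BWd$ for large $K$. I expect the only real obstacle to be the second paragraph: obtaining the uniform $\bigO(K^{-1})$ second-difference estimate with merely $C^3$ regularity (rather than $C^4$) and correctly accounting for the two shrinking boundary half-cells on which $\eta_{\bm\sigma^K}''$ is forced to vanish; once $\|\eta_{\bm\sigma^K}''-\ddot\sigma\|_{L^2}\to0$ and the endpoint data converge, the lower-order convergences follow automatically from the fundamental theorem of calculus.
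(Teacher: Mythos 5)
Your proof is correct, and the first thing to note is that the paper never proves this lemma itself: it is recalled from \cite[Lemma 4.3]{HeRuWi18}, so your argument serves as a self-contained replacement for that citation rather than competing with an in-paper proof. Your structure is the natural one for this kind of consistency statement: read off that $\eta_{\bm\sigma^K}''$ is piecewise constant, equal to the scaled second central difference $K^2(\sigma^K_{k+1}-2\sigma^K_k+\sigma^K_{k-1})$ on the interior cells $[t^K_{k-1/2},t^K_{k+1/2}]$ and $0$ on the two boundary half-cells; use the $C^3$ Taylor expansion with Lagrange remainder to get the uniform $\bigO(K^{-1})$ consistency of the second difference (correctly, $C^3$ suffices here and $C^4$ is not needed, since the two third-order remainders are merely bounded and enter at order $h^3$); absorb the boundary half-cells, of total length $K^{-1}$, into the $L^2$ error; and then integrate up from the endpoint data $\eta_{\bm\sigma^K}(0)=\sigma(0)$, $\eta_{\bm\sigma^K}'(0)=K(\sigma^K_1-\sigma^K_0)\to\dot\sigma(0)$ via Taylor with integral remainder to convert $L^2$-convergence of second derivatives into $L^\infty$-convergence of the lower-order terms. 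Two minor remarks. First, your appeal to uniform continuity of $\ddot\sigma$ yields only $o(1)$ within each cell, not the $\bigO(K^{-1})$ you state; since $\sigma\in C^3$, $\ddot\sigma$ is Lipschitz and the stated rate does hold, but in any event $o(1)$ suffices, and the global $L^2$ rate is dominated anyway by the $\bigO(K^{-1/2})$ contribution of the boundary half-cells. Second, you do not need $K$ large for $\eta_{\bm\sigma^K}$ to be $\BWd$-valued: writing $s=K(t-t^K_{k-1/2})\in[0,1]$, on each interior cell $\eta_{\bm\sigma^K}(t)$ is the convex combination of $\sigma^K_{k-1},\sigma^K_k,\sigma^K_{k+1}$ with weights $\tfrac{(1-s)^2}{2}$, $\tfrac12+s(1-s)$, $\tfrac{s^2}{2}$ (a fact the paper itself invokes in the Mosco-convergence proof), the boundary branches are convex combinations of two samples, and $\BWd$ is convex; so membership holds for every $K$. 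These are cosmetic; the argument is complete as written.
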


The next two lemmas compare the discrete path and spline energy with the corresponding 
continuous counterpart evaluated on the piecewise cubic Hermite  interpolation. To this end, we define the hat operator for discrete functionals: for a functional $\SplineEnergy^K$ on $(\ProbWGd)^{K+1}$ we define $\SplineEnergyt^K$ on $(\R^d\times\BWd)^{K+1}$ via $$\SplineEnergyt^K((m_k,\sigma^2_k)_{k=0,\ldots,K})\coloneqq\SplineEnergy^K((\mathcal{N}(m_k,\sigma^2_k))_{k=0,\ldots,K}).$$
\begin{lmm}[Path energy estimate]\label{lmm:path_nrg_estimate}
Let $\sigma=(\sigma(t))_{t\in[0,1]}\in H^2((0,1),\BWd)$, and define $\bm\sigma^K$ as above. Then, for $K$ big enough, we have
$
\vert\pathenergyt[\eta_{\bm\sigma^K}]-\PathEnergyt^K[\bm\sigma^K]\vert\leq CK^{-1},
$
where $\pathenergy$ and $\PathEnergy^K$ have been defined in \eqref{eq:path_energy} and \eqref{discrete_path_energy_def}, respectively, and the constant $C$ depends only on the curve $\sigma$.
\end{lmm}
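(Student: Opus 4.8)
The plan is to reduce the statement to a completely explicit finite computation, which is possible because on $\BWd$ all relevant quantities are explicit. Write $\sigma_k := \sigma_k^K = \sigma(k/K)$ and set $d_k := \sigma_k - \sigma_{k-1}$ for $k=1,\dots,K$. Since diagonal covariances are simultaneously diagonalizable, Proposition~\ref{prop:gauss_facts}(2) gives $\Wass^2(\mathcal N(0,\sigma_{k-1}^2),\mathcal N(0,\sigma_k^2)) = \Vert d_k\Vert_F^2$, so that
\[
\PathEnergyt[\bm\sigma^K] = K\sum_{k=1}^{K}\Vert d_k\Vert_F^2 ,
\]
while by the identification \eqref{eq:diag_path_energy} the continuous energy is $\pathenergyt[\eta_{\bm\sigma^K}] = \int_0^1\Vert\dot\eta_{\bm\sigma^K}\Vert_F^2\d t$. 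The first task is therefore to evaluate this integral in closed form.

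First I would differentiate the piecewise formula for $\eta_{\bm\sigma^K}$. On a middle interval $[t^K_{k-1/2},t^K_{k+1/2}]$ the velocity is affine, interpolating linearly between $\dot\eta = K d_k$ at the left endpoint and $\dot\eta = K d_{k+1}$ at the right endpoint; on the two end half-intervals $\dot\eta$ is constant, equal to $K d_1$ resp.\ $K d_K$. In particular $\dot\eta_{\bm\sigma^K}$ is continuous. Applying the elementary identity $\int_0^L\Vert a + (b-a)s/L\Vert_F^2\d s = \tfrac{L}{3}\bigl(\Vert a\Vert_F^2 + \langle a,b\rangle_F + \Vert b\Vert_F^2\bigr)$ with $L=1/K$ on each middle interval, together with the constant contributions $\tfrac{K}{2}\Vert d_1\Vert_F^2$ and $\tfrac{K}{2}\Vert d_K\Vert_F^2$ from the half-intervals, yields the closed form
\[
\pathenergyt[\eta_{\bm\sigma^K}] = \tfrac{2K}{3}\sum_{k=1}^{K}\Vert d_k\Vert_F^2 + \tfrac{K}{6}\bigl(\Vert d_1\Vert_F^2 + \Vert d_K\Vert_F^2\bigr) + \tfrac{K}{3}\sum_{k=1}^{K-1}\langle d_k,d_{k+1}\rangle_F ,
\]
where $\langle A,B\rangle_F = \tr(A^\top B)$ denotes the Frobenius inner product.

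The key step is the cancellation. Substituting the polarization identity $2\langle d_k,d_{k+1}\rangle_F = \Vert d_k\Vert_F^2 + \Vert d_{k+1}\Vert_F^2 - \Vert d_{k+1}-d_k\Vert_F^2$ and collecting terms, the bulk $\tfrac{2K}{3}\sum_k\Vert d_k\Vert_F^2$ and the boundary corrections from the half-intervals combine to cancel against $\PathEnergyt[\bm\sigma^K] = K\sum_k\Vert d_k\Vert_F^2$ except for the second-difference terms, giving the exact identity
\[
\pathenergyt[\eta_{\bm\sigma^K}] - \PathEnergyt[\bm\sigma^K] = -\frac{K}{6}\sum_{k=1}^{K-1}\Vert d_{k+1}-d_k\Vert_F^2 = -\frac{K}{6}\sum_{k=1}^{K-1}\Vert \sigma_{k+1}-2\sigma_k+\sigma_{k-1}\Vert_F^2 .
\]
This bookkeeping -- tracking the two half-interval terms so that the boundary contributions cancel precisely -- is the only genuinely delicate part of the argument; everything else is routine.

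It then remains to bound the second-difference sum using only $\sigma\in H^2$. Writing $\sigma_{k+1}-2\sigma_k+\sigma_{k-1} = \int_0^{1/K}\!\int_{-s}^{s}\ddot\sigma(t_k^K+r)\d r\d s$ and applying Cauchy--Schwarz over this domain (of area $K^{-2}$) gives $\Vert\sigma_{k+1}-2\sigma_k+\sigma_{k-1}\Vert_F^2 \le K^{-3}\int_{t_{k-1}^K}^{t_{k+1}^K}\Vert\ddot\sigma\Vert_F^2\d r$. Since each subinterval is counted at most twice, summation gives $\sum_{k=1}^{K-1}\Vert\sigma_{k+1}-2\sigma_k+\sigma_{k-1}\Vert_F^2 \le 2K^{-3}\Vert\ddot\sigma\Vert_{L^2}^2$, whence $|\pathenergyt[\eta_{\bm\sigma^K}] - \PathEnergyt[\bm\sigma^K]| \le \tfrac13 K^{-2}\Vert\ddot\sigma\Vert_{L^2}^2$. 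This is in fact of order $K^{-2}$, stronger than the claimed $CK^{-1}$, with $C = \tfrac13\Vert\ddot\sigma\Vert_{L^2}^2$ depending only on $\sigma$. For periodic boundary conditions one simply drops the two half-intervals and reads all indices cyclically, and the same computation applies verbatim.
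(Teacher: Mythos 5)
Your proof is correct, and while it starts from the same explicit computation as the paper, the key step is genuinely different and yields a sharper result. The paper also expands $\int_0^1\Vert\dot\eta_{\bm\sigma^K}\Vert_F^2\d t$ using the piecewise formula for $\dot\eta_{\bm\sigma^K}$, but it never closes the algebra: it keeps the cross term $K\sum_k\langle d_k,d_{k+1}-d_k\rangle_F$ (in your notation $d_k=\sigma^K_k-\sigma^K_{k-1}$), estimates it by Cauchy--Schwarz, and invokes the discrete bounds $K\sum_k\Vert d_k\Vert_F^2\leq C\vert\sigma\vert^2_{H^1}$ and $K^3\sum_k\Vert d_{k+1}-d_k\Vert_F^2\leq C\vert\sigma\vert^2_{H^2}$ from \cite[Lemma 4.2]{HeRuWi18}; that cross term is exactly what limits the paper to the rate $K\cdot K^{-1/2}\vert\sigma\vert_{H^1}\cdot K^{-3/2}\vert\sigma\vert_{H^2}=K^{-1}\vert\sigma\vert_{H^1}\vert\sigma\vert_{H^2}$. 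You instead carry the two half-intervals along and polarize, which reveals that all first-difference terms cancel exactly, leaving the identity
\begin{equation*}
\pathenergyt[\eta_{\bm\sigma^K}]-\PathEnergyt[\bm\sigma^K]=-\frac{K}{6}\sum_{k=1}^{K-1}\bigl\Vert\sigma^K_{k+1}-2\sigma^K_k+\sigma^K_{k-1}\bigr\Vert_F^2\leq 0,
\end{equation*}
and your $H^2$ bound on the second differences (double-integral representation, Cauchy--Schwarz over a domain of area $K^{-2}$, overlap factor $2$) is valid for $\sigma\in H^2$, giving $\vert\pathenergyt[\eta_{\bm\sigma^K}]-\PathEnergyt[\bm\sigma^K]\vert\leq\tfrac13K^{-2}\vert\sigma\vert_{H^2}^2$. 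Your route buys three things: the improved rate $O(K^{-2})$, the sign information (the extension never has larger energy than the discrete path energy), and cleaner boundary bookkeeping -- the paper's opening display silently drops the half-interval contributions $\tfrac{K}{2}\Vert d_1\Vert_F^2+\tfrac{K}{2}\Vert d_K\Vert_F^2$ as well as the mismatch $-K\Vert d_K\Vert_F^2$ between the sums up to $K-1$ and up to $K$; these stray terms are each $O(K^{-1})$ since $\dot\sigma\in H^1\subset L^\infty$, so the paper's conclusion stands, but your identity shows they cancel exactly rather than being merely negligible. Nothing is lost by your sharpening: the paper reuses this lemma in the Mosco-convergence proof with the constant $C\vert\sigma^K\vert_{H^1}\vert\sigma^K\vert_{H^2}$, and your constant $\tfrac13\vert\sigma\vert^2_{H^2}$ is equally uniformly bounded along the weakly convergent $H^2$ sequences appearing there.
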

\begin{proof}
By the definition of ${\eta}_{\bm\sigma^K}(t)$, and using \eqref{eq:diag_path_energy} and Proposition \ref{prop:gauss_facts} (2) for the expressions of $\pathenergyt$ and $\PathEnergyt$, respectively, we obtain
\begin{align*}
\vert\pathenergyt[\eta_{\bm\sigma^K}]-\PathEnergyt^K[\bm\sigma^K]\vert=&\ \left\vert K\sum_{k=1}^{K-1}\vert\sigma^K_k-\sigma^K_{k-1}\vert^2+\sum_{k=1}^{K-1}\int_{t^K_{k-1/2}}^{t^K_{k+1/2}}\vert\sigma^K_{k+1}-2\sigma^K_k+\sigma^K_{k-1}\vert^2K^4(t-t^K_{k-1/2})^2\d t\right.\\
&\left.\ \ +2\sum_{k=1}^{K-1}\int_{t^K_{k-1/2}}^{t^K_{k+1/2}}(\sigma^K_k-\sigma^K_{k-1})(\sigma^K_{k+1}-2\sigma^K_k+\sigma^K_{k-1})K^3(t-t^K_{k-1/2})\d t-K\sum_{k=1}^{K}\vert\sigma^K_k-\sigma^K_{k-1}\vert^2\right\vert\\
&\leq K\sum_{k=1}^{K-1}\vert\sigma^K_k-\sigma^K_{k-1}\vert\vert\sigma^K_{k+1}-2\sigma^K_k+\sigma^K_{k-1}\vert+\frac{K}{3}\sum_{k=1}^{K-1}\vert\sigma^K_{k+1}-2\sigma^K_k+\sigma^K_{k-1}\vert^2\\
&\leq K\left(\sum_{k=1}^{K-1}\vert\sigma^K_k-\sigma^K_{k-1}\vert^2\right)^{\tfrac12}\left(\sum_{k=1}^{K-1}\vert\sigma^K_{k+1}-2\sigma^K_k+\sigma^K_{k-1}\vert^2\right)^{\tfrac12}+\frac{K}{3}\sum_{k=1}^{K-1}\vert\sigma^K_{k+1}-2\sigma^K_k+\sigma^K_{k-1}\vert^2\\
&\leq C'KK^{-\tfrac12}\vert\sigma\vert_{H^1}K^{-\tfrac32}\vert\sigma\vert_{H^2}+C''K^{-2}\vert\sigma\vert^2_{H^2}\leq CK^{-1},
\end{align*}
where we used $K\sum_{k=1}^{K-1}\vert\sigma^K_k-\sigma^K_{k-1}\vert^2\leq C^*\vert\sigma\vert^2_{H^1}$, and $K^3\sum_{k=1}^{K-1}\vert\sigma^K_{k+1}-2\sigma^K_{k}+\sigma^K_{k-1}\vert^2\leq C''\vert\sigma\vert^2_{H^2}$ (cf. proof of \cite[Lemma 4.2]{HeRuWi18}). The final inequality holds for $K$ chosen big enough.
\end{proof}
\begin{lmm}[Spline energy estimate]\label{lmm:spline_nrg_estimate}
Let $\sigma=(\sigma(t))_{t\in[0,1]}\in H^2((0,1),\BWd)$, and define $\bm\sigma^K$ as above. Then, we have
$\splineenergyt[\eta_{\bm\sigma^K}]=\SplineEnergyt^K[\bm\sigma^K]$,
where $\splineenergy$ and $\SplineEnergy^K$ have been defined in \eqref{def:spline_energy_cont} and \eqref{discrete_spline_energy}, respectively.
\end{lmm}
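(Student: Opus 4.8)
The plan is to exploit the special structure of the temporal extension $\eta_{\bm\sigma^K}$: on each interior cell it is a quadratic polynomial in $t$, so its second derivative is \emph{constant} there, and this constant is exactly a scaled second difference of the samples. Since the section works in the centered case $m_t=0$, the preceding Corollary reduces $\splineenergyt$ to $\int_0^1\Vert\ddot\sigma_t\Vert_F^2\d t$, and hence the continuous functional evaluated on $\eta_{\bm\sigma^K}$ becomes the integral of a piecewise constant integrand, which will reassemble into the discrete energy with no quadrature error at all.

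First I would differentiate the middle branch of the definition of $\eta_{\bm\sigma^K}$ twice in $t$, obtaining on $[t^K_{k-1/2},t^K_{k+1/2}]$ the constant matrix
$$\ddot\eta_{\bm\sigma^K}(t)=(\sigma_{k+1}-2\sigma_k+\sigma_{k-1})K^2,$$
while on the two boundary half-intervals $[0,t^K_{1/2}]$ and $[t^K_{K-1/2},1]$ the curve is affine, so $\ddot\eta_{\bm\sigma^K}\equiv 0$ there. Using that each interior cell has length $t^K_{k+1/2}-t^K_{k-1/2}=K^{-1}$, integrating the squared Frobenius norm gives
$$\splineenergyt[\eta_{\bm\sigma^K}]=\int_0^1\Vert\ddot\eta_{\bm\sigma^K}(t)\Vert_F^2\d t=\sum_{k=1}^{K-1}\frac1K\Vert(\sigma_{k+1}-2\sigma_k+\sigma_{k-1})K^2\Vert_F^2=K^3\sum_{k=1}^{K-1}\Vert\sigma_{k+1}-2\sigma_k+\sigma_{k-1}\Vert_F^2.$$

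Next I would evaluate the discrete side. For diagonal Gaussians the matrices commute, so specializing part~(3) of Proposition~\ref{prop:gauss_facts} at $t=\tfrac12$ yields \emph{exactly} $\std(\Bary(\mu_{k-1},\mu_{k+1}))=\tfrac12(\sigma_{k-1}+\sigma_{k+1})$, with mean $0$ in the centered case; thus the barycenter is computed without error (no $\bigO(h^2)$ term as in the general $2$D lemma). Combining this with the decomposition $\Wass^2(\mu,\nu)=|m_\mu-m_\nu|^2+\Vert\sigma_\mu-\sigma_\nu\Vert_F^2$ from part~(2) of the Proposition (valid since diagonal matrices are simultaneously diagonalizable), I get
$$\Wass^2(\mu_k,\Bary(\mu_{k-1},\mu_{k+1}))=\Big\Vert\sigma_k-\tfrac12(\sigma_{k-1}+\sigma_{k+1})\Big\Vert_F^2=\tfrac14\Vert\sigma_{k+1}-2\sigma_k+\sigma_{k-1}\Vert_F^2,$$
so that $\SplineEnergyt[\bm\sigma^K]=4K^3\sum_{k=1}^{K-1}\tfrac14\Vert\sigma_{k+1}-2\sigma_k+\sigma_{k-1}\Vert_F^2$ coincides term by term with the expression for $\splineenergyt[\eta_{\bm\sigma^K}]$, which proves the identity.

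The computation is essentially routine; the only point requiring care — and the reason the statement is an exact equality rather than an $\bigO(K^{-1})$ estimate like the path-energy Lemma~\ref{lmm:path_nrg_estimate} — is the alignment of two exactness facts: the midpoint quadrature is sharp because $\ddot\eta_{\bm\sigma^K}$ is piecewise \emph{constant} (in contrast to $\dot\eta_{\bm\sigma^K}$, which is piecewise affine and produces surviving cross terms), and the diagonal barycenter equals the arithmetic midpoint of the standard deviations with no higher-order remainder. I would double-check that the interior cells $[t^K_{k-1/2},t^K_{k+1/2}]$ for $k=1,\dots,K-1$ together with the two affine half-cells tile $[0,1]$ so no contribution is missed, and that the factor $4K^3$ cancels precisely against the $\tfrac14$ coming from the second difference; once these are in place the two sides are literally identical.
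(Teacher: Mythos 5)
Your proof is correct and follows essentially the same route as the paper's own (much terser) argument: compute $\ddot\eta_{\bm\sigma^K}$ as the piecewise constant second difference, evaluate the continuous energy via \eqref{eq:diag_spline_energy}, and identify the discrete side with $4K^3\sum_{k=1}^{K-1}\bigl\Vert\sigma_k-\tfrac12(\sigma_{k+1}+\sigma_{k-1})\bigr\Vert_F^2$ using the diagonal-Gaussian barycenter and distance formulas. The only difference is that you spell out the steps (the tiling of $[0,1]$, the exactness of the barycenter, the cancellation of $4K^3$ against $\tfrac14$) that the paper leaves implicit, which is fine.
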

\begin{proof}
Recall that $\ddot{\eta}_{\bm\sigma^K}(t)=(\sigma^K_{k+1}-2\sigma^K_k+\sigma^K_{k-1})K^{2}$ for $t\in[t^K_{k-1/2},t^K_{k+1/2}]$, $k=1,\ldots,K-1$, and $0$ otherwise.
We then obtain by using \eqref{eq:diag_spline_energy} and Proposition \ref{prop:gauss_facts}
\begin{align*}
\splineenergyt[\eta_{\bm\sigma^K}]-\SplineEnergyt^K[\bm\sigma^K]=4K^3\sum_{k=1}^{K-1}\left\vert\sigma^K_k-\frac{\sigma^K_{k+1}+\sigma^K_{k-1}}{2}\right\vert^2-\SplineEnergyt^K[\bm\sigma^K] =0.
\end{align*}
\end{proof}
We are now in the position to prove the convergence of the discrete spline functional to the continuous one. To this end, we introduce two indicator functions to filter the constraints.
For the continuous problem
$\mathcal{I}[\sigma]=0$ if $\mu_t\coloneqq\mathcal{N}(0,\sigma_t^2)$ satisfies the evaluation constraints \eqref{eq:interp_constraints} (with $\overline\mu_i\coloneqq\mathcal{N}(0,\overline\sigma_i^2)$ for given interpolation constraints $(\overline t_i,\overline\sigma_i)_{i=1,\ldots,I}$) as well as the corresponding boundary condition from \eqref{eq:naturalbc}-\eqref{eq:periodicbc} and  $\infty$ else. Furthermore, for the discrete problem
$\mathcal{I}^K[\sigma]=0$ if $\sigma=\eta_{(\sigma_0,\ldots,\sigma_K)}$ for some $(\sigma_0,\ldots,\sigma_K)\in (\BWd)^{K+1}$, where $\mathcal{N}(0, \sigma_i^2)_{i=0,\ldots,K}$ 
satisfies the evaluation constraints \eqref{eq:interp_constraints_d} 
as well as the corresponding discrete boundary condition from \eqref{eq:naturalbc_d}-\eqref{eq:periodicbc_d}, 
and $\infty$ else.

Regarding the compatibility (cf. equation \eqref{eq:interp_constraints_d}) of the given interpolation times $\overline t_i$ and the number $K+1$ of points along a discrete curve, we shall in the following and without explicit mention always interpret $K\rightarrow\infty$ as a sequence of natural numbers approaching infinity, such that $K\overline t_i\in\N_0$ for $i=1,\ldots,I$.

Before stating the main result of this section, let us recall the definition of Mosco convergence on metric vector spaces. 
\begin{dfntn}
A sequence of functionals  $\splineenergy^K:\Sigma\rightarrow\overline{\R}$ on a metric vector space $\Sigma$ is said to converge to the functional $\splineenergy:\Sigma\rightarrow\overline{\R}$ in the sense of Mosco, if the following conditions hold:
\begin{itemize}
\item \textit{weak liminf inequality}: For every sequence $(\sigma^K)_{K\in\N}\subset\Sigma$, such that $\sigma^K\rightharpoonup\sigma$, it holds $$\splineenergy(\sigma)\leq\liminf_{K\rightarrow\infty}\splineenergy^K(\sigma^K).$$
\item \textit{strong limsup inequality}: For every $\sigma\in \Sigma$ there is a sequence $(\sigma^K)_{K\in\N}$ with $\sigma^K\rightarrow\sigma$, such that $$\splineenergy(\sigma)\geq\limsup \splineenergy^K(\sigma^K).$$
\end{itemize}
\end{dfntn}
Then, we finally obtain the following theorem:
\begin{thrm}[Mosco convergence and convergence of discrete minimizers]
Let $\pathenergyt^K[\sigma]$ be given by $\PathEnergyt[\bm\sigma^K]$ if $\sigma=\eta_{\bm\sigma^K}$, and $\infty$ otherwise. Similarly, define $\splineenergyt^K[\sigma]$ be given by $\SplineEnergyt[\bm\sigma^K]$ if $\sigma=\eta_{\bm\sigma^K}$, and $\infty$ otherwise. Then, set $\splineenergyt^{\delta,K}\coloneqq\splineenergyt+\delta\pathenergyt$. With respect to the weak topology in $H^2((0,1);\BWd)$ we have $\lim_{K\rightarrow\infty}\splineenergyt^{\delta,K}+\mathcal{I}^K=\splineenergyt^\delta+\mathcal{I}$ in the sense of Mosco, for $\delta>0$. Moreover, any sequence $(\sigma^K)_K$ with $\splineenergyt^{\delta,K}[\sigma^K]+\mathcal{I}^K[\sigma^K]$ uniformly bounded contains a subsequence that converges weakly in $H^2((0,1);\BWd)$. As a consequence, any sequence of minimizers of $\splineenergyt^{\delta,K}+\mathcal{I}^K$ contains a subsequence converging weakly to a minimizer of $\splineenergyt^{\delta}+\mathcal{I}$.
\end{thrm}
\begin{proof}
We have to show the \textit{weak liminf} and the \textit{strong limsup} inequalities defining Mosco convergence. 
Concerning the weak $\liminf$-inequality, we need to show that for every sequence $(\sigma^K)_{K\in\N}\subset H^2((0,1);\BWd)$, such that $\sigma^K\rightharpoonup\sigma$, it holds that $\liminf_{K\rightarrow\infty}\splineenergyt^{\delta,K}[\sigma^K]+\mathcal{I}^K[\sigma^K]\geq\splineenergyt[\sigma]+\mathcal{I}[\sigma]$. Let $\sigma^K\rightharpoonup\sigma$ in $H^2((0,1);\BWd)$. Upon taking a subsequence, we may replace the $\liminf$ by an actual $\lim$ and may assume without loss of generality $\splineenergyt^{\delta,K}[\sigma^K]+\mathcal{I}^K[\sigma^K]\leq C$ for some constant $C<\infty$. Thus, we have $\sigma^K=\eta_{(\sigma^K_0,,\ldots,\sigma^K_K)}$ for some $(\sigma^K_0,\ldots,\sigma^K_K)\in (\BWd)^{K+1}$ and this estimate implies
\begin{align*}
d_K\coloneqq\max_{k\in\lbrace1,\ldots,K\rbrace}\vert\sigma^K_k-\sigma^K_{k-1}\vert\leq\sqrt{\sum_{k=1}^K\vert\sigma^K_k-\sigma^K_{k-1}\vert^2}=\sqrt{\pathenergyt^K[\sigma^K]/K}\leq\sqrt{\frac{C}{\delta K}},
\end{align*}
which converges to zero as $K\rightarrow\infty$. Next, we show that $\mathcal{I}[\sigma]=0$. It is straightforward to see that 
$\eta_{\bm\sigma^K}(t)$ is in the convex hull of $\sigma_{k-1}$, $\sigma_{k}$, and $\sigma_{k+1}$
for $t\in[t^K_{k-1/2},t^K_{k+1/2}]$.
Thus, the evaluation constraint is satisfied in the limit. To conclude, by the weak lower semi-continuity of $\splineenergyt^\delta$ due to Lemma \ref{lmm:continuity_prop}, and by Lemmas \ref{lmm:path_nrg_estimate} and \ref{lmm:spline_nrg_estimate} we obtain
\begin{align*}
\splineenergyt^\delta[\sigma]+\mathcal{I}[\sigma]=\splineenergyt^\delta[\sigma]\leq\liminf_{K\rightarrow\infty}\splineenergyt^\delta[\sigma^K]\leq\liminf_{K\rightarrow\infty}\splineenergyt^{\delta,K}[\sigma^K]+\delta\tfrac{C}{K}\vert\sigma^K\vert_{H^1}\vert\sigma^K\vert_{H^2}\leq\liminf_{K\rightarrow\infty}\left(\splineenergyt^{\delta,K}[\sigma^K]+\mathcal{I}^K[\sigma^K]\right),
\end{align*}
where we used the uniform boundedness of $\vert\sigma^K\vert_{H^1}$ and $\vert\sigma^K\vert_{H^2}$ due to the weak convergence of $\sigma^K$.\\

Concerning the strong $\limsup$ inequality, we need to show that for every $\sigma\in  H^2((0,1);\BWd)$ there is a sequence $(\sigma^K)_{K\in\N}\subset H^2((0,1);\BWd)$ with $\sigma^K\rightarrow\sigma$, such that $\splineenergyt^\delta[\sigma]+\mathcal{I}[\sigma]\geq\limsup_{K\rightarrow\infty}\splineenergyt^{\delta,K}[\sigma^K]+\mathcal{I}^K[\sigma^K]$. Let $\sigma\in C^3([0,1],\BWd)$ with finite energy (in particular, $\mathcal{I}(\sigma)=0$), and choose $\sigma^K=\eta_{\bm\sigma^K}$ as the recovery sequence. By definition, we have $\mathcal{I}^K[\sigma^K]=0$. As $K\rightarrow\infty$, we have $d_K\coloneqq\max_{\lbrace 1,\ldots,K\rbrace}\vert\sigma(t_k^K)-\sigma(t^K_{k-1})\vert\rightarrow 0$, as well as $\sigma^K\rightarrow\sigma$ strongly in $H^2$ by Lemma \ref{lmm:strong_conv_pw_bezier}. Thus, by the strong $H^2$-continuity of $\splineenergyt^\delta$ from Lemma \ref{lmm:continuity_prop} and by Lemma \ref{lmm:spline_nrg_estimate} we have
\begin{align*}
\splineenergyt^\delta[\sigma]+\mathcal{I}[\sigma]&
=\splineenergyt^\delta[\sigma]=\lim_{K\rightarrow\infty}\splineenergyt^\delta[\eta_\sigma^K]\\
&\geq\limsup_{K\rightarrow\infty}\splineenergyt^{\delta,K}[\sigma^K]-\delta CK^{-1}\vert\sigma^K\vert_{H^1}\vert\sigma^K\vert_{H^2}=\limsup_{K\rightarrow\infty}\left(\splineenergyt^{\delta,K}[\eta_\sigma^K]+\mathcal{I}^K[\sigma^K]\right),
\end{align*}
where we again used the uniform boundedness of $\vert\sigma^K\vert_{H^1}$ and $\vert\sigma^K\vert_{H^2}$, now due to the strong convergence of $\sigma^K$. Thus, we obtain $\limsup\splineenergyt^{\delta,K}+\mathcal{I}^K\leq\splineenergyt^\delta+\mathcal{I}$ on $C^3([0,1],\BWd)$. By a density argument ($C^3$ functions fulfilling interpolation constraints are dense in the space of $H^2$ functions satisfying the same interpolation constraints, cf. \cite[Lemma 4.6]{HeRuWi18}) and the strong $H^2$ continuity of $\splineenergyt^\delta$, we obtain
\begin{align*}
\splineenergyt^\delta[\sigma]+\mathcal{I}[\sigma]\geq\limsup_{K\rightarrow\infty}\splineenergyt^{\delta,K}[\sigma^K]+\mathcal{I}^K[\sigma^K].
\end{align*}
To show the convergence of discrete minimizers it remains to establish equicoercivity. Here we will follow the same strategy as in the proof of \cite[Theorem 4.9]{HeRuWi18}. Let $(\sigma^K)_K$ be a sequence with $\splineenergyt^{\sigma,K}[\sigma^K]+\mathcal{I}^K[\sigma^K]$ uniformly bounded. As before, we can assume without loss of generality that $\sigma^K=\eta_{(\sigma^K_0,\ldots,\sigma^K_K)}$ for some $(\sigma^K_0,\ldots,\sigma^K_K)\in (\BWd)^{K+1}$. 
Following the proof of Lemma \ref{lmm:path_nrg_estimate} 
and also recalling the estimate $K\sum_{k=1}^{K-1}\vert\sigma^K_k-\sigma^K_{k-1}\vert^2\leq C^*\vert\sigma\vert^2_{H^1}$
we obtain a uniform bound of the $H^1$-seminorm $\vert\sigma^K\vert_{H^1}$.
By Poincar\'e's inequality, one even obtains uniform boundedness of the norm $\Vert\sigma^K\Vert_{H^{1}}$. It remains to show the uniform boundedness of the $H^2$-seminorm $\vert\sigma^K\vert_{H^2}$. Indeed, we obtain the estimate
\begin{align*}
\vert\sigma^K\vert^2_{H^{2}}&=\vert\eta_{(\sigma^K_0,\ldots,\sigma^K_K)}\vert^2_{H^{2}}=4K^3\sum_{k=1}^{K-1}\left\vert\sigma_k^K-\frac{\sigma_{k+1}^K+\sigma_{k-1}^K}{2}\right\vert^2=4K^3\sum_{k=1}^{K-1}\Wass^2[\sigma_k^K,\Bary(\sigma_{k+1}^K,\sigma_{k-1}^K)]\\
&=\SplineEnergyt^K[\sigma_0^K,\dots,\sigma_K^K]\leq\splineenergyt^{\delta,K}[\sigma^K].
\end{align*}
The statement about the convergence of minimizers is now a standard consequence of the Mosco convergence from the previous theorem, cf. \cite{Br14}.
\end{proof}
\section{Fully discrete Wasserstein splines and numerical results}\label{sec:fully_discrete}
\subsection{Algorithmic foundations}\label{sec:algo}
To implement Wasserstein splines numerically, we have to further discretize the time-discrete spline energy in space.
With the application to images in mind, we consider $\Omega\coloneqq[0,1]^2$ ($d=2$) and an 'Eulerian' discretization of probability measures: Let $\mu\in\mathcal{P}(\Omega)$. Here, the image intensities on each colour channel are encoded as Lebesgue densities. We can obtain the discretized version of a density $\mu$ by first defining the computational mesh
\[
\discreteDomain=\left\{\tfrac{0}{M-1},\tfrac{1}{M-1},\ldots,\tfrac{M-1}{M-1}\right\}\times\left\{\tfrac{0}{N-1},\tfrac{1}{N-1},\ldots,\tfrac{N-1}{N-1}\right\}\quad 
\text{for }M,N\geq 3.
\]
Next, we integrate the mass of $\mu$ on each cell $\Omega^{kl}\coloneqq[\frac{k}{M-1},\frac{k+1}{M-1}]\times[\frac{l}{N-1},\frac{l+1}{N-1}]$, and define the weight
\begin{align*}
\omega_{kl}=\int_{\Omega^{kl}}d\mu,
\end{align*}
obtaining the spatially discrete measure $\mu^D[\omega]\coloneqq\sum_{k, l}\omega_{kl}\delta_{kl},$
where $\omega\coloneqq(\omega_{kl})_{k,l}\in\Sigma_{MN}\coloneqq\{\omega\in\R^{MN}_+:\sum_{m,n}\omega_{mn}=1\}$, and $\delta_{kl}$ is defined as the delta distribution located at the center of the cell (pixel) $\Omega^{kl}$. Alternatively, for other applications the 'Lagrangian' discretization might be more useful: In this case, one considers $\domain=\R^d$, and independently samples a measure $\mu\in\ProbW(\domain)$ a total of $L$ times. One then defines the spatially discrete measure $\mu^D[x]\coloneqq\frac{1}{L}\sum_{l=1}^L\delta_{x_l}$, where $x\coloneqq(x_l)_l\in\R^{dL}$. Since obtaining the exact Wasserstein distance between two discrete measures with $L=MN$ atoms comes with a cost of $\mathcal{O}(M^3N^3)$, we approximate the Wasserstein distance between two discrete measures $\mu$ and $\nu$ by the entropy-regularized Wasserstein distance $\WassEps$ introduced originally in \cite{Cu13}, with regularization parameter $\epsilon>0$. The loss $\WassEps$ can be very efficiently computed in an auto-differentiable manner, i.e. the gradients of $\WassEps(\cdot,\cdot)$ with respect to both the weights $\omega$ and locations $x$ are obtained as a by-product of the evaluation of this function (cf. \cite{CuDo14}) with state-of-the-art implementations of the Sinkhorn algorithm, such as in \cite{Sc19} and \cite{ChFe21}. 
Correspondingly, we take into account the entropy-regularized approximation $\BaryEps_{(\cdot)}$ (cf. \cite{BeCa14}) of the (generalized) barycenter, which, once again, can be efficiently computed in an auto-differentiable fashion. The entropy-relaxed, regularized spline objective functional will then look as follows:
\begin{align}\label{eq:fully_discrete_spline_funct}
\SplineEnergy^{\delta,K,\epsilon}_{(G)}(\bm\mu^D[\theta^0,\ldots,\theta^K])&\coloneqq 4K^3\sum_{k=1}^{K-1}\WassEps^2(\mu_k^D[\theta^k],\BaryEps_{(\mu_k^D[\theta^k])}(\mu^D_{k+1}[\theta^{k+1}],\mu^D_{k-1}[\theta^{k-1}]))\\\nonumber
&+\delta K\sum_{k=0}^{K-1}\WassEps^2(\mu^D_k[\theta^k],\mu^D_{k+1}[\theta^{k+1}]),
\end{align}
where we omitted the $K$ super-index. For a fixed computational domain $\domain_{MN}$, temporal resolution $K$, entropy-regularization $\epsilon>0$, regularizer $\delta>0$, interpolation constraints and a chosen boundary condition, our aim in the first discretization variant is to minimize the previous functional with respect to the weights $\theta^k=\omega^k\in\Sigma_{MN}$ of the discrete probability measures $\mu_k^D=\sum_{y\in\domain_{MN}}\omega^k_y \delta_y$ for indices $k$ that are not fixed by the interpolation conditions. The explicit minimization of functional \eqref{eq:fully_discrete_spline_funct} as a function of weights is performed by Algorithm \ref{algo:optimization}. For the second variant, one can instead fix a number of samples/locations $L$, and straightforwardly minimize the above functional over the positions $\theta^k=x^k\in\R^{dL}$ of atoms of the discrete probability measures $\mu_k^D\coloneqq\frac{1}{L}\sum_{l=1}^L\delta_{x^k_l}$. This can be implemented completely analogously to Algorithm \ref{algo:optimization}.
\begin{algorithm}
\SetInd{1ex}{1ex}
$t=0$\;
\For{$k=0$ \KwTo $K \operatorname{and} k \operatorname{not\ fixed}$}{
$\tilde\omega^k=\hat\omega^k=\Id_{MN}/MN$\;
}
\While{$\operatorname{not\ converged}$}{
$\beta=(t+1)/2$\;
\For{$k=0$ \KwTo $K \operatorname{and} k \operatorname{not\ fixed}$}{
\tcc{update weights (Nesterov's accelerated gradient update)}
$\omega^k=(1-\beta^{-1})\hat\omega^k+\beta^{-1}\tilde\omega^k$\;
\tcc{compute gradient (Sinkhorn algorithm)}
$\operatorname{grad}_k=\nabla_{\omega^k}\SplineEnergy^{\delta,K,\epsilon}_{(G)}(\bm\mu^D[\omega^0,\ldots,\omega^K])$\;
\tcc{update weights}
$\tilde\omega^k=\tilde\omega^k\odot e^{-t\beta\operatorname{grad}_k}$\;
$\tilde\omega^k=\tilde\omega^k/(\tilde\omega^k)^T\Id_{MN}$\;
$\hat\omega^k=(1-\beta^{-1})\hat\omega^k+\beta^{-1}\tilde\omega^k$\;
}
$t=t+1$\;
}
\caption{Algorithm for minimizing $\SplineEnergy_{(G)}^{\delta,K,\epsilon}$ as a function of weights $\omega^0,\ldots,\omega^K\in\Sigma_{MN}$. The product $\odot$ and the exponential function $e$ act component-wise on vectors, and $\Id_{MN}=(1,\ldots,1)\in\R^{MN}$.}
\label{algo:optimization}
\end{algorithm}
\subsection{Numerical results}\label{sec:results}
In what follows, we investigate and discuss qualitative
properties of the spline interpolation in the space of probability distributions, being aware that the superior temporal smoothness of this interpolation is difficult to show with series of still images. 

Figure~\ref{fig:GaussianRestricted} shows discrete piecewise geodesic and discrete spline interpolations of 
three two-dimensional Gaussian distributions $\overline\mu_i=\mathcal{N}(m_i,\sigma_i^2)$  for $i=1,2,3$ at the prescribed times $\overline t_1=0, \overline t_2=\tfrac12, \overline t_3=1$, where the interpolation is computed as a minimizer of $\SplineEnergy^K$ over all Gaussian parameters $(m_k,\sigma_k)\in\R^d\times\BW$ for $k=0,\ldots,K$. For the discrete splines the center of masses of spline interpolation correspond almost perfectly to the cubic 
spline interpolation of the center of masses of the key frames. 
The third row shows the spline interpolation result for the same key frames. 
This time, we instead optimize functional \eqref{eq:fully_discrete_spline_funct} over all weights $\theta^k=\omega^k\in\Sigma_{MN}$ for $k=0,\ldots,K$ and $M=N=128$. In particular, the solutions need not to be Gaussian distributions. The fourth row shows the difference between the first and second row, i.e. between the piecewise geodesic and spline interpolations. 

The next example in Figure~\ref{fig:Circle_Annulus} investigates the interpolation of three key frames with constant density on an annulus for the first and
constant density on a disk for the second and third (at times $\overline t_1=0, \overline t_2=\tfrac12, \overline t_3=1$). In case of the piecewise geodesic interpolation one observes a 
decreasing density on the closing annulus in between the first two key frames and obviously constant interpolation in between the second and third key frames.
In case of the spline interpolation ($\delta=0$) the annulus also closes between the first and second key frames
but shows a strong overshooting at the center between the second and third key frames. 

In Figure~\ref{fig:circle_square} a thin annulus shaped distribution and two times an equal square shaped frame are taken into account as key frame distributions (at times $\overline t_1=0, \overline t_2=\tfrac12, \overline t_3=1$).
Different from Figure 3 in \cite{JuRaRu23} in the case of spline interpolations in the metamorphosis model, one does not observe strongly inward pointing edges between the equal square shaped frames. Instead strong overshooting effects are visible at the corners of the squares.  

In Figure~\ref{fig:Two_gaussians} the key frames consist of pairs of Gaussians with 
constant mass and constant variance, which are far apart for the first and fourth key frame and close by for the second and third key frame (at times $\overline t_1=0, \overline t_2=\tfrac13, \overline t_3=\tfrac23, \overline t_4=1$). Piecewise geodesic interpolation leads to piecewise linear trajectories of the center of masses, whereas trajectories are curved in the spline case with a merger of the two 
bumps in between the second and third key frame.

In Figure~\ref{fig:Gaussian_break}, three key frames represent a single Gaussian, a pair of vertically displaced Gaussian of half the mass, 
and the vertically displaced configuration rotated by $-\tfrac\pi4$ (at times $\overline t_1=0, \overline t_2=\tfrac12, \overline t_3=1$). The piecewise geodesic shows the splitting of mass and approximately straight line trajectories
between the pairs of key frames. For the spline interpolation one observes an overshooting with a positive rotation angle in between the first and the second key frame.

In Figure~\ref{fig:decoupling}, we leverage the results of equation \eqref{eq:spline-energy-decoupling}. An implementation of the 
decoupling leads to a significant decrease of the computing time and the number of iterations with no apparent loss of detail.
\begin{figure*}[htb]
\resizebox{\linewidth}{!}{
\tikzstyle{frame} = [line width=1.8pt, draw=red,inner sep=0.01em]
\begin{tikzpicture}
\begin{scope}[scale=1.4]
\begin{scope}
\edef\currentCnt{0}
\foreach \i in  {0,1,2,3,4,5,6,7,8} {
\pgfmathparse{int(\i*2)}
\xdef\ii{\pgfmathresult}
\ifthenelse{0 = \i \OR 8 = \i \OR 4 = \i}{
\node[frame,anchor=south west] at (1.*\currentCnt+0.08,0) {\includegraphics[width=0.15\linewidth]{geod__\ii_cropped.png}};}
{
\node[anchor=south west] at (1.*\currentCnt,0-0.08) {\includegraphics[width=0.15\linewidth]{geod__\ii_cropped.png}};
}
\pgfmathparse{\currentCnt+1.9}
\xdef\currentCnt{\pgfmathresult}
}
\end{scope}

\begin{scope}[shift={(0,-1.9)}]
\edef\currentCnt{0}
\foreach \i in  {0,1,2,3,4,5,6,7,8} {
\pgfmathparse{int(\i*2)}
\xdef\ii{\pgfmathresult}
\ifthenelse{0 = \i \OR 8 = \i \OR 4 = \i}{
\node[frame,anchor=south west] at (1.*\currentCnt+0.08,0) {\includegraphics[width=0.15\linewidth]{spline__\ii_cropped.png}};}
{
\node[anchor=south west] at (1.*\currentCnt,0-0.08) {\includegraphics[width=0.15\linewidth]{spline__\ii_cropped.png}};
}
\pgfmathparse{\currentCnt+1.9}
\xdef\currentCnt{\pgfmathresult}
}
\edef\currentCnt{0}
\node[rotate=90] at (-0.2,0.9) {\Large{spline}};
\node[rotate=90] at (-0.2,2.8) {\Large{p.w. geodesic}};
\node[rotate=90] at (-0.2,-2.7) {\Large{difference}};
\node[anchor=south west] at (17.1,0.)
{\includegraphics[width=0.3\linewidth]{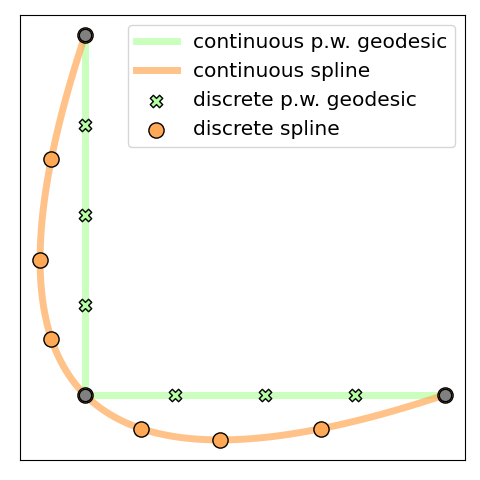}};
\end{scope}

\begin{scope}[shift={(0,-3.8)},scale=1.0]
\edef\currentCnt{0}
\foreach \i in  {0,1,2,3,4,5,6,7,8} {
\pgfmathparse{int(\i*2)}
\xdef\ii{\pgfmathresult}
\ifthenelse{0 = \i \OR 8 = \i \OR 4 = \i}{
\node[frame,anchor=south west] at (\currentCnt+0.08,0) {\includegraphics[width=0.15\linewidth]{u_\i_wd_cropped.png}};
}
{
\node[anchor=south west] at (\currentCnt,0-0.08) {\includegraphics[width=0.15\linewidth]{u_\i_wd_cropped.png}};
}

\pgfmathparse{\currentCnt+1.9}
\xdef\currentCnt{\pgfmathresult}
}
\edef\currentCnt{0}

\node[rotate=90] at (-0.2,0.9) {\Large{spline}};
\end{scope}
\begin{scope}[shift={(0,-5.7)},scale=1.0]
\edef\currentCnt{0}
\foreach \i in  {0,1,2,3,4,5,6,7,8} {
\pgfmathparse{int(\i*2)}
\xdef\ii{\pgfmathresult}
\ifthenelse{0 = \i \OR 8 = \i \OR 4 = \i}{
\node[frame,anchor=south west] at (1.*\currentCnt+0.08,0) {\includegraphics[width=0.15\linewidth]{diff_\ii.png}};}
{
\node[anchor=south west] at (1.*\currentCnt,0-0.08) {\includegraphics[width=0.15\linewidth]{diff_\ii.png}};
}
\pgfmathparse{\currentCnt+1.9}
\xdef\currentCnt{\pgfmathresult}
}
\edef\currentCnt{0}

\node[anchor=south west] at (17.1,0.)
{\includegraphics[width=0.30\linewidth]{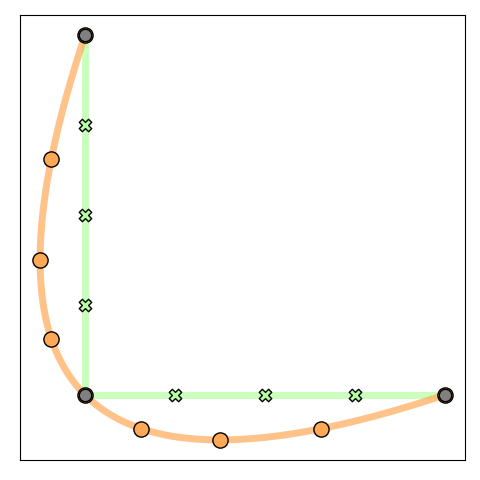}};
\end{scope}
\begin{scope}
\edef\currentCnt{0}
\end{scope}
\end{scope}
\end{tikzpicture}
}
\caption{First two rows: A comparison of discrete piecewise geodesic interpolation (first row) and discrete spline interpolation (second row) for $\delta=0$ is shown for key frame distributions framed in red. The optimization was done on $\ProbWGd$. Third row: Same as second row, except the optimization was performed in the full space $\ProbW(\R^d)$. Bottom row: Difference between spline and piece-wise geodesic interpolations. Top right: Plot of 
the center of masses as a polygonal curve in $\R^2$. Bottom right: Plot of the standard deviations as a polygonal curve in $\R^2$.}
\label{fig:GaussianRestricted}
\end{figure*}
\begin{figure*}[htb]
\resizebox{\linewidth}{!}{
\tikzstyle{frame} = [line width=1.8pt, draw=red,inner sep=0.01em]
\begin{tikzpicture}
\begin{scope}[scale=1.4]
\begin{scope}
\edef\currentCnt{0}
\foreach \i in  {0,1,2,3,4,5,6,7,8} {
\ifthenelse{0 = \i \OR 8 = \i \OR 4 = \i}{
\node[frame,anchor=south west] at (\currentCnt+0.08,0) {\includegraphics[width=0.15\linewidth]{u_\i_cag.png}};}
{
\node[anchor=south west] at (\currentCnt,0-0.08) {\includegraphics[width=0.15\linewidth]{u_\i_cag.png}};
}
\pgfmathparse{\currentCnt+1.9}
\xdef\currentCnt{\pgfmathresult}
}
\end{scope}

\begin{scope}[shift={(0,-1.9)}]
\edef\currentCnt{0}
\foreach \i in  {0,1,2,3,4,5,6,7,8} {
\ifthenelse{0 = \i \OR 8 = \i \OR 4 = \i}{
\node[frame,anchor=south west] at (\currentCnt+0.08,0.) {\includegraphics[width=0.15\linewidth]{u_\i_cas.png}};
}
{
\node[anchor=south west] at (\currentCnt,-0.08) {\includegraphics[width=0.15\linewidth]{u_\i_cas.png}};
}

\pgfmathparse{\currentCnt+1.9}
\xdef\currentCnt{\pgfmathresult}
}
\edef\currentCnt{0}

\node[rotate=90] at (-0.2,0.9) {\Large{spline}};
\node[rotate=90] at (-0.2,2.8) {\Large{p.w. geodesic}};
\end{scope}

\begin{scope}[shift={(0,-3.05)}]
\edef\currentCnt{0}
\node[anchor=south west] at (0.5,-1.47) {\includegraphics[width=1.28\linewidth]{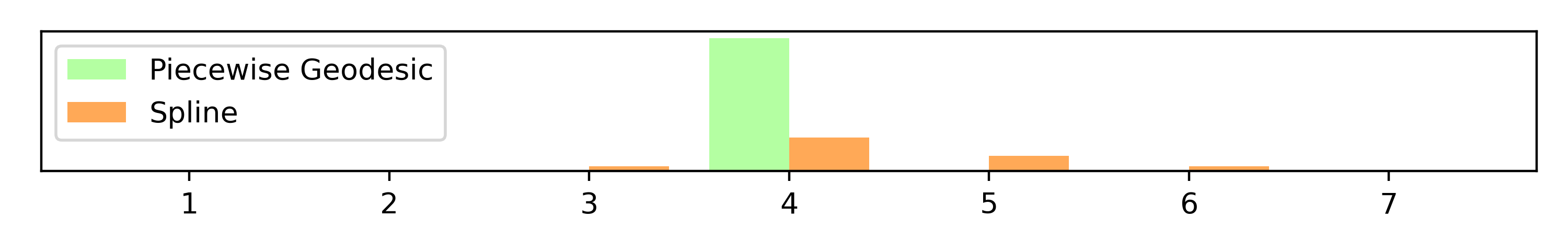}};

\end{scope}

\begin{scope}
\edef\currentCnt{0}
\end{scope}
\end{scope}
\end{tikzpicture}
}
\caption{Discrete piecewise geodesic interpolation (top) and discrete spline interpolation for $\delta=0$ (middle)
of three key frames with constant density on an annulus for the first and
constant density on a disk for the second and third (framed in red). Bottom: Contribution of each time-step $k=1,\ldots,K-1$ to the spline energy, i.e. $\Wass^2(\mu_k,\Bary(\mu_{k-1},\mu_{k+1}))$ for the spline interpolation (orange) and piecewise geodesic interpolation (green).}
\label{fig:Circle_Annulus}
\end{figure*}
\begin{figure*}[htb]
\resizebox{\linewidth}{!}{
\tikzstyle{frame} = [line width=1.8pt, draw=red,inner sep=0.01em]
\begin{tikzpicture}
\begin{scope}[scale=1.4]
\begin{scope}
\edef\currentCnt{0}
\foreach \i in  {0,1,2,3,4,5,6,7,8} {
\ifthenelse{0 = \i \OR 8 = \i \OR 4 = \i}{
\node[frame,anchor=south west] at (\currentCnt+0.08,0) {\includegraphics[width=0.15\linewidth]{u_\i_csg.png}};}
{
\node[anchor=south west] at (\currentCnt,0-0.08) {\includegraphics[width=0.15\linewidth]{u_\i_csg.png}};
}
\pgfmathparse{\currentCnt+1.9}
\xdef\currentCnt{\pgfmathresult}
}
\end{scope}

\begin{scope}[shift={(0,-1.9)}]
\edef\currentCnt{0}
\foreach \i in  {0,1,2,3,4,5,6,7,8} {
\ifthenelse{0 = \i \OR 8 = \i \OR 4 = \i}{
\node[frame,anchor=south west] at (\currentCnt+0.08,0.) {\includegraphics[width=0.15\linewidth]{u_\i_css.png}};
}
{
\node[anchor=south west] at (\currentCnt,0.-0.08) {\includegraphics[width=0.15\linewidth]{u_\i_css.png}};
}

\pgfmathparse{\currentCnt+1.9}
\xdef\currentCnt{\pgfmathresult}
}
\edef\currentCnt{0}

\node[rotate=90] at (-0.2,0.9) {\large{spline}};
\node[rotate=90] at (-0.2,2.8) {\large{p.w. geodesic}};
\node[rotate=90] at (-0.2,-0.85) {\large{difference}};
\end{scope}
\begin{scope}[shift={(0,-3.85)}]
\edef\currentCnt{0}
\foreach \i in  {0,1,2,3,4,5,6,7,8} {
\ifthenelse{\NOT 0 = \i }{
\node[anchor=south west] at (\currentCnt,0.) {\includegraphics[width=0.15\linewidth]{u_\i_diff.png}};
}
{
\node[anchor=south west] at (\currentCnt,0.) {\includegraphics[width=0.15\linewidth]{u_\i_diff.png}};
}

\pgfmathparse{\currentCnt+1.9}
\xdef\currentCnt{\pgfmathresult}
}
\end{scope}
\begin{scope}
\edef\currentCnt{0}
\end{scope}
\end{scope}
\end{tikzpicture}
}
\caption{Piecewise geodesic (top) and spline interpolation (middle) are shown for key frames (framed in red) consisting of a thin annulus-shaped distribution and two equal thin square-shaped distributions, using the color map ${0}\hspace{1mm}$\protect\resizebox{.08\linewidth}{!}{\protect\includegraphics{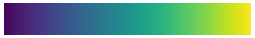}}$\hspace{1mm}4\mathrm{e}{-4}$. Bottom: Difference between the spline and piecewise geodesic interpolations, using the color map ${-6}\mathrm{e}{-5}\hspace{1mm}$\protect\resizebox{.08\linewidth}{!}{\protect\includegraphics{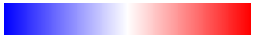}}$\hspace{1mm}6\mathrm{e}{-5}$.}
\label{fig:circle_square}
\end{figure*}
\begin{figure*}[htb]
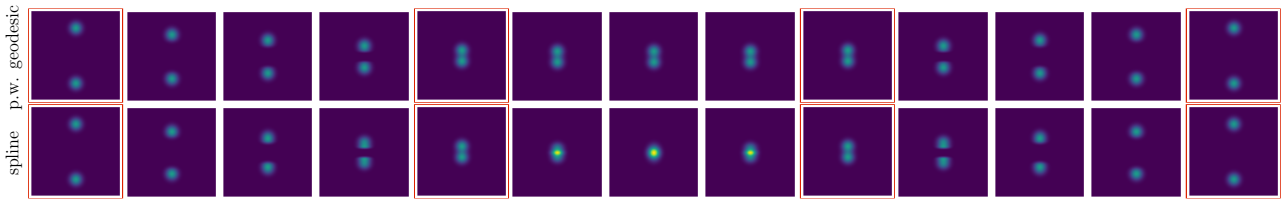

\resizebox{\linewidth}{!}{
\tikzstyle{frame} = [line width=1.8pt, draw=red,inner sep=0.01em]
\begin{tikzpicture}
\begin{scope}[scale=1.4]
\begin{scope}
\edef\currentCnt{0}
\foreach \i in  {0,1,2,3,4,5,6,7,8,9,10,11,12} {
\ifthenelse{0 = \i \OR 8 = \i \OR 4 = \i \OR 12 = \i}{
\node[frame,anchor=south west] at (\currentCnt+0.08,0) {\includegraphics[width=0.15\linewidth]{u_\i_wdg.png}};}
{
\node[anchor=south west] at (\currentCnt,0-0.08) {\includegraphics[width=0.15\linewidth]{u_\i_wdg.png}};
}
\pgfmathparse{\currentCnt+1.9}
\xdef\currentCnt{\pgfmathresult}
}
\end{scope}

\begin{scope}[shift={(0,-1.9)}]
\edef\currentCnt{0}
\foreach \i in  {0,1,2,3,4,5,6,7,8,9,10,11,12} {
\ifthenelse{0 = \i \OR 8 = \i \OR 4 = \i \OR 12 = \i}{
\node[frame,anchor=south west] at (\currentCnt+0.08,0.) {\includegraphics[width=0.15\linewidth]{u_\i_wd.png}};
}
{
\node[anchor=south west] at (\currentCnt,0.-0.08) {\includegraphics[width=0.15\linewidth]{u_\i_wd.png}};
}

\pgfmathparse{\currentCnt+1.9}
\xdef\currentCnt{\pgfmathresult}
}
\edef\currentCnt{0}

\node[rotate=90] at (-0.2,0.9) {\Large{spline}};
\node[rotate=90] at (-0.2,2.8) {\Large{p.w. geodesic}};
\end{scope}

\begin{scope}
\edef\currentCnt{0}
\end{scope}
\end{scope}
\end{tikzpicture}
}
\caption{The key frames represent two Gaussians that are far apart from each other (first and fourth key frames) and close to each other (second and third key frames). Piecewise geodesic (top) and spline (bottom) interpolations are shown.}
\label{fig:Two_gaussians}
\end{figure*}
\begin{figure*}[htb]
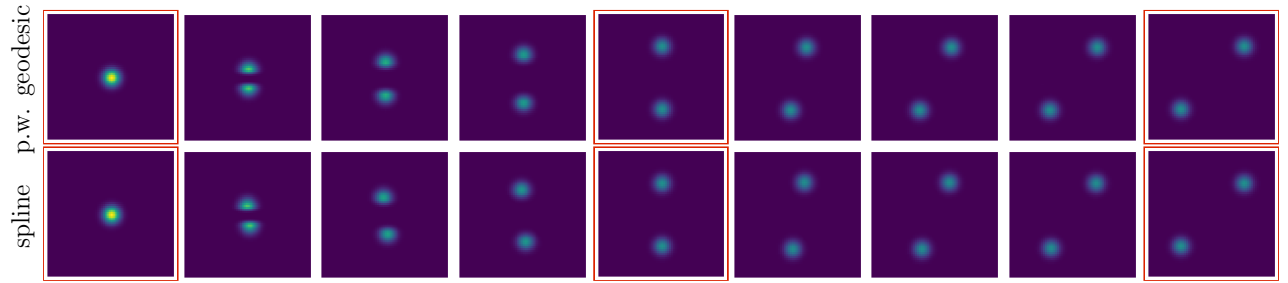

\resizebox{\linewidth}{!}{
\tikzstyle{frame} = [line width=1.8pt, draw=red,inner sep=0.01em]
\begin{tikzpicture}
\begin{scope}[scale=1.4]
\begin{scope}
\edef\currentCnt{0}
\foreach \i in  {0,1,2,3,4,5,6,7,8} {
\ifthenelse{0 = \i \OR 8 = \i \OR 4 = \i}{
\node[frame,anchor=south west] at (\currentCnt+0.08,0) {\includegraphics[width=0.15\linewidth]{u_\i_gsg.png}};}
{
\node[anchor=south west] at (\currentCnt,0-0.08) {\includegraphics[width=0.15\linewidth]{u_\i_gsg.png}};
}
\pgfmathparse{\currentCnt+1.9}
\xdef\currentCnt{\pgfmathresult}
}
\end{scope}

\begin{scope}[shift={(0,-1.9)}]
\edef\currentCnt{0}
\foreach \i in  {0,1,2,3,4,5,6,7,8} {
\ifthenelse{0 = \i \OR 8 = \i \OR 4 = \i}{
\node[frame,anchor=south west] at (\currentCnt+0.08,0.) {\includegraphics[width=0.15\linewidth]{u_\i_gss.png}};
}
{
\node[anchor=south west] at (\currentCnt,0.-0.08) {\includegraphics[width=0.15\linewidth]{u_\i_gss.png}};
}

\pgfmathparse{\currentCnt+1.9}
\xdef\currentCnt{\pgfmathresult}
}
\edef\currentCnt{0}

\node[rotate=90] at (-0.2,2.8) {\Large{p.w. geodesic}};
\node[rotate=90] at (-0.2,0.9) {\Large{spline}};
\end{scope}

\begin{scope}
\edef\currentCnt{0}
\end{scope}
\end{scope}
\end{tikzpicture}
}
\caption{From left to right the key frames represent a single Gaussian, a pair of vertically displaced Gaussian of half the mass, and the vertically displaced configuration rotated by $-\tfrac\pi4$. Discrete piecewise geodesic (top) and spline (bottom) interpolations are shown.}
\label{fig:Gaussian_break}
\end{figure*}
\begin{figure*}[htb]
\resizebox{\linewidth}{!}{
\tikzstyle{frame} = [line width=1.8pt, draw=red,inner sep=0.01em]
\begin{tikzpicture}
\begin{scope}[scale=1.]
\begin{scope}
\node[anchor=south west] at (0.,0.) {\includegraphics[width=1.\linewidth]{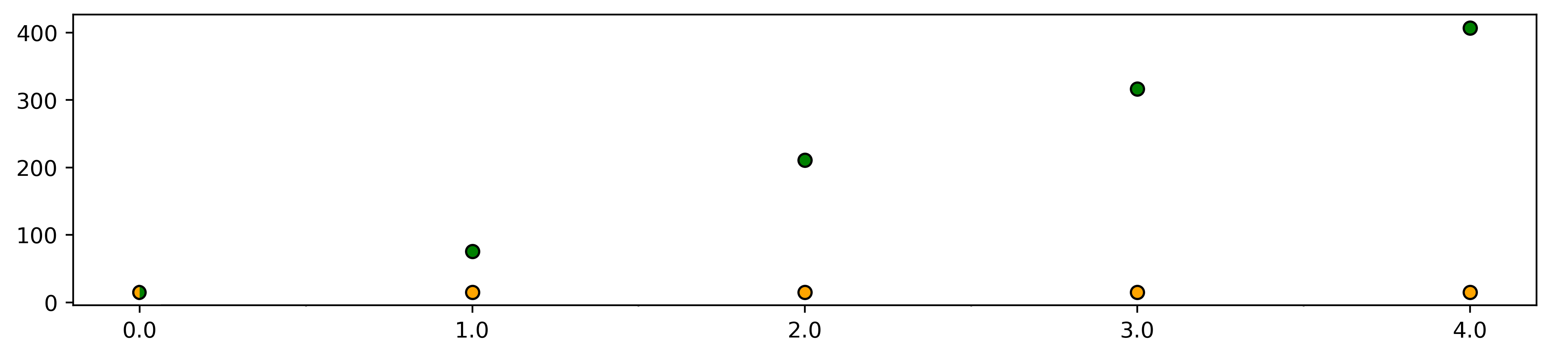}};
\node[rotate=0] at (0.,2.1) {$t$};
\node[rotate=0] at (17., 0.3) {$x$};
\end{scope}

\end{scope}
\end{tikzpicture}
}
\caption{Time $t$ (in seconds) until convergence of the fully discrete spline interpolation problem is reached, for a series of five interpolation problems $P(x)$ depending on parameter $x\in\{0,1,2,3,4\}$ ($x$-axis). The interpolation problem $P(x)$ is defined as follows: The prescribed times are $\overline t_0=0$, $\overline t_1=0.5$ and $\overline t_2=1$, and the prescribed probability measures are given by $\overline\mu_0=\mathcal{N}((0,0),\sigma_0^2)$, $\overline\mu_1=\mathcal{N}((x,x),\sigma_1^2)$, and $\overline\mu_2=\mathcal{N}((0,0),\sigma_2^2)$, for diagonal standard deviation matrices $\sigma_0=\diag(1,2)$, $\sigma_1=\diag(1,1)$ and $\sigma_2=\diag(2,1)$. Dots denote the computation time of the algorithm solving problem $P(x)$, both with implementation of the decoupling of the means as described by equation \eqref{eq:spline-energy-decoupling} (orange), and without it (green).}
\label{fig:decoupling}
\end{figure*}
\begin{figure*}[htb]
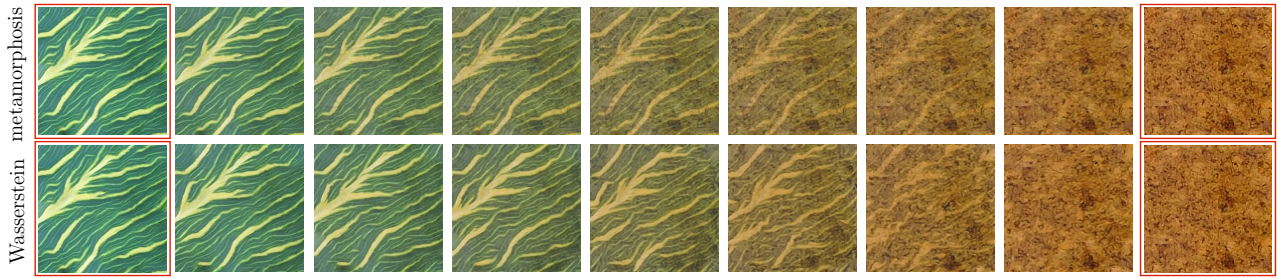

\resizebox{\linewidth}{!}{
\tikzstyle{frame} = [line width=0.8pt, draw=red,inner sep=0.2em]
\begin{tikzpicture}
\begin{scope}[scale=1.45]
\begin{scope}
\edef\currentCnt{0}
\foreach \i in  {0,1,2,3,4,5,6,7,8} {
\ifthenelse{0 = \i \OR 8 = \i}{
\node[frame,anchor=south west] at (\currentCnt+0.04,0) {\includegraphics[width=0.15\linewidth]{u_\i.png}};}
{
\node[anchor=south west] at (\currentCnt,-0.022) 
{\includegraphics[width=0.15\linewidth]{u_\i.png}};
}
\pgfmathparse{\currentCnt+1.9}
\xdef\currentCnt{\pgfmathresult}
}
\end{scope}

\begin{scope}[shift={(0,-1.9)}]
\edef\currentCnt{0}
\foreach \i in  {0,1,2,3,4,5,6,7,8} {
\ifthenelse{0 = \i \OR 8 = \i}{
\node[frame,anchor=south west] at (\currentCnt+0.04,0) {\includegraphics[width=0.15\linewidth]{u_\i.png}};}
{
\node[anchor=south west] at (\currentCnt,-0.022) 
{\includegraphics[width=0.15\linewidth]{synthesized_lc\i.png}};
}
\pgfmathparse{\currentCnt+1.9}
\xdef\currentCnt{\pgfmathresult}
}
\node[rotate=90] at (-0.2,2.8) {\large{metamorphosis}};
\node[rotate=90] at (-0.2,0.9) {\large{Wasserstein}};
\end{scope}
\begin{scope}[shift={(0,-3.8)}]
\end{scope}

\begin{scope}
\edef\currentCnt{0}
\end{scope}
\end{scope}
\end{tikzpicture}
}
\caption{Time discrete metamorphosis (top) and Wasserstein spline interpolations (bottom) with framed prescribed images/feature distributions for $K=16$. For the metamorphosis spline the key frames are chosen identical to the synthesized texture of the Wasserstein splines. Due to symmetry, only the first half of the interpolations is shown.}
\label{fig:texture}
\end{figure*}

\begin{figure*}[t]
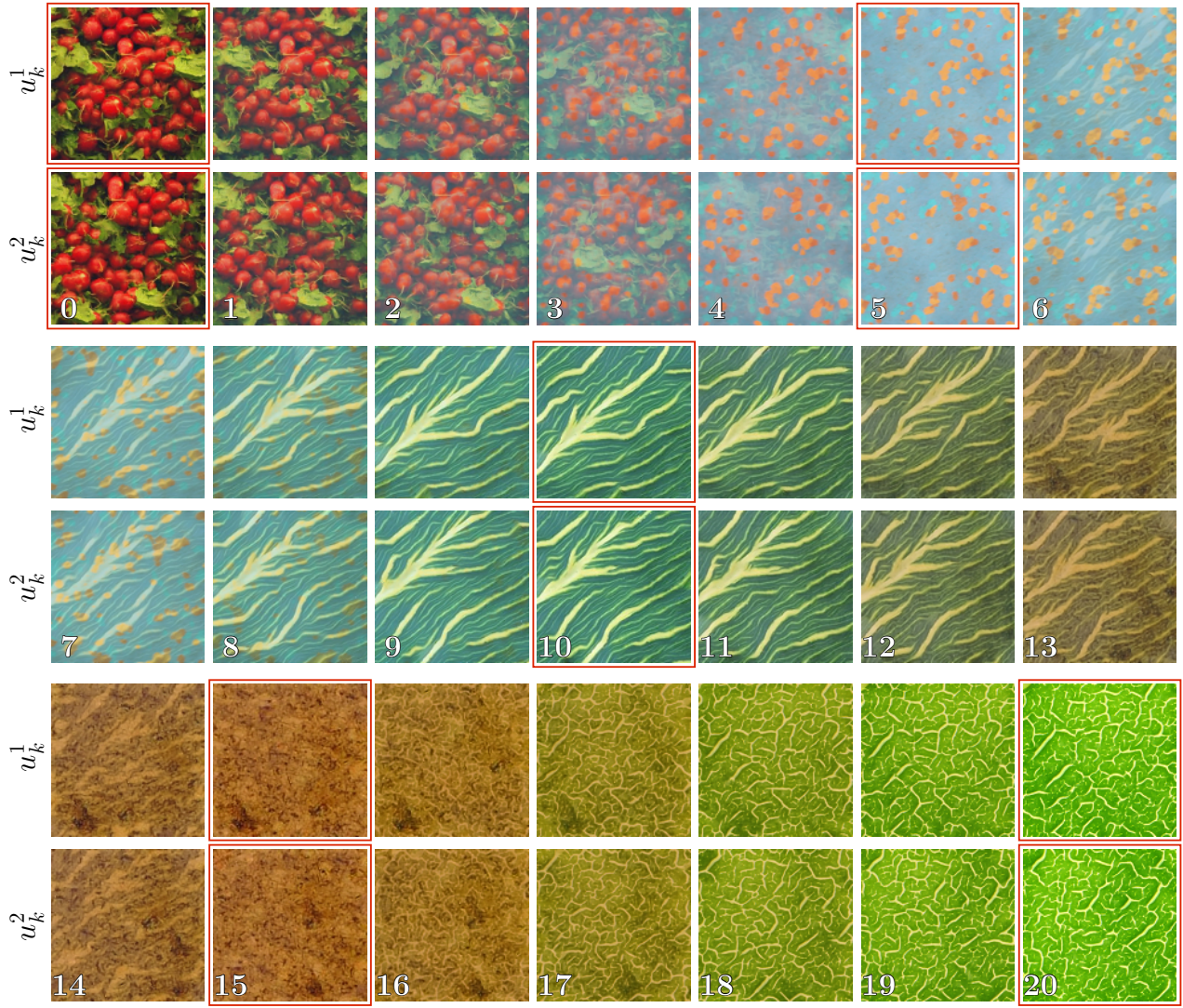

\resizebox{\linewidth}{!}{
\tikzstyle{frame} = [line width=0.8pt, draw=red,inner sep=0.2em]
\begin{tikzpicture}
\begin{scope}[scale=1.4]
\edef\currentCnt{0}
\foreach \i in  {0,1,2,3,4,5,6} {
\ifthenelse{0 = \i \OR 5 = \i}{
\node[frame,anchor=south west] at (\currentCnt+0.03,0) {\includegraphics[width=0.1478\linewidth]{synthesized_\i.png}};}
{
\node[anchor=south west] at (\currentCnt,0-0.03) {\includegraphics[width=0.1478\linewidth]{synthesized_\i.png}};
}
\ifthenelse{0 = \i \OR 5 = \i}{
\node[frame,anchor=south west] at (\currentCnt+0.03,-1.95) {\includegraphics[width=0.1478\linewidth]{synthesized2_\i.png}};}
{
\node[anchor=south west] at (\currentCnt,-1.95-0.03) {\includegraphics[width=0.1478\linewidth]{synthesized2_\i.png}};
}
\node at (\currentCnt+0.25,-1.7) {\color{white}{\contour{black}{ \Large{$\mathbf{\i}$}}}};
\pgfmathparse{\currentCnt+1.9}
\xdef\currentCnt{\pgfmathresult}
}
\node[rotate=90] at (-0.2,1) {\Large{$u^1_k$}};
\node[rotate=90] at (-0.2,-1.0) {\Large{$u^2_k$}};

\begin{scope}[shift={(0,-4.0)}]
\edef\currentCnt{0}
\foreach \j in  {7,8,9,10,11,12,13} {
\pgfmathparse{int(\j-7)}
\edef\i{\pgfmathresult}
\ifthenelse{10 = \j}{
\node[frame,anchor=south west] at (\currentCnt+0.03,0) {\includegraphics[width=0.1478\linewidth]{synthesized_\j.png}};}
{
\node[anchor=south west] at (\currentCnt,0-0.03) {\includegraphics[width=0.1478\linewidth]{synthesized_\j.png}};
}
\ifthenelse{10 = \j}{
\node[frame,anchor=south west] at (\currentCnt+0.03,-1.95) {\includegraphics[width=0.1478\linewidth]{synthesized2_\j.png}};}
{
\node[anchor=south west] at (\currentCnt,-1.95-0.03) {\includegraphics[width=0.1478\linewidth]{synthesized2_\j.png}};
}
\node at (\currentCnt+0.25,-1.7) {\color{white}{\contour{black}{ \Large{$\mathbf{\j}$}}}};
\pgfmathparse{\currentCnt+1.9}
\xdef\currentCnt{\pgfmathresult}
}
\node[rotate=90] at (-0.2,1) {\Large{$u^1_k$}};
\node[rotate=90] at (-0.2,-1.0) {\Large{$u^2_k$}};
\end{scope}

\begin{scope}[shift={(0,-8.0)}]
\edef\currentCnt{0}
\foreach \j in  {14,15,16,17,18,19,20} {
\pgfmathparse{int(\j-14)}
\edef\i{\pgfmathresult}
\ifthenelse{20 = \j \OR 15 = \j}{
\node[frame,anchor=south west] at (\currentCnt+0.03,0) {\includegraphics[width=0.1478\linewidth]{synthesized_\j.png}};}
{
\node[anchor=south west] at (\currentCnt,0-0.03) {\includegraphics[width=0.1478\linewidth]{synthesized_\j.png}};
}
\ifthenelse{20 = \j \OR 15 = \j}{
\node[frame,anchor=south west] at (\currentCnt+0.03,-1.95) {\includegraphics[width=0.1478\linewidth]{synthesized2_\j.png}};}
{
\node[anchor=south west] at (\currentCnt,-1.95-0.03) {\includegraphics[width=0.1478\linewidth]{synthesized2_\j.png}};
}
\node at (\currentCnt+0.25,-1.7) {\color{white}{\contour{black}{ \Large{$\mathbf{\j}$}}}};
\pgfmathparse{\currentCnt+1.9}
\xdef\currentCnt{\pgfmathresult}
}
\node[rotate=90] at (-0.2,1) {\Large{$u^1_k$}};
\node[rotate=90] at (-0.2,-1.0) {\Large{$u^2_k$}};
\end{scope}
\end{scope}
\end{tikzpicture}
}
\caption{Two realizations of a texture spline for different starting latent space samples, top and bottom of each panel respectively with parameters $K=20$, $\delta=0.01$. Let us remark that not only the actual spline interpolated textures but also the key frame textures 
differ as they are all different samples of the underlying spline probability distributions $\nu^K_k$.
}
\label{fig:texture_3}
\end{figure*}

\begin{figure*}[t]
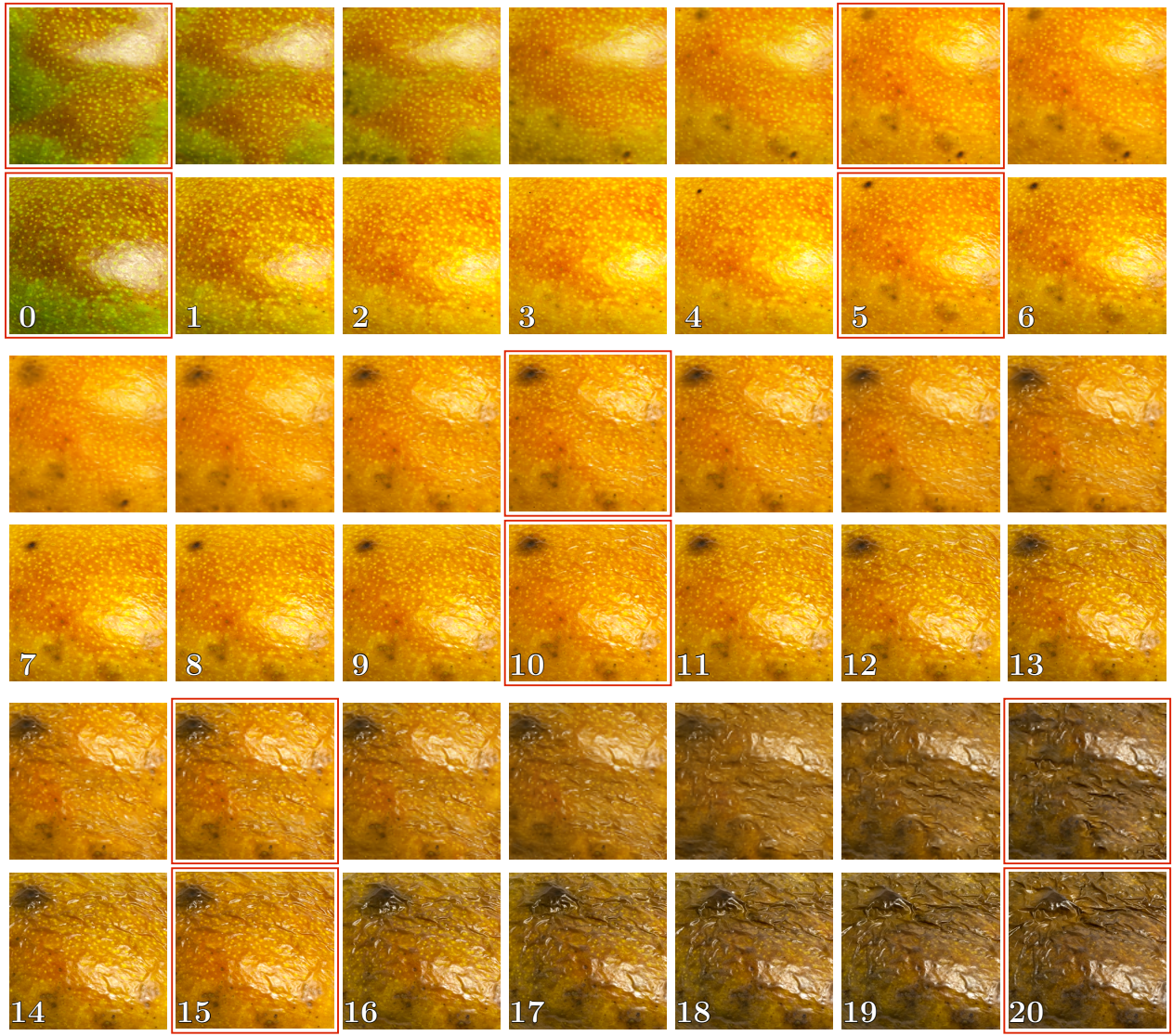

\resizebox{\linewidth}{!}{
\tikzstyle{frame} = [line width=0.8pt, draw=red,inner sep=0.2em]
\begin{tikzpicture}
\begin{scope}[scale=1.4]
\edef\currentCnt{0}
\foreach \i in  {0,1,2,3,4,5,6} {
\ifthenelse{0 = \i \OR 5 = \i}{
\node[frame,anchor=south west] at (\currentCnt+0.03,0) {\includegraphics[width=0.1478\linewidth]{mangoes5_\i.png}};}
{
\node[anchor=south west] at (\currentCnt,0-0.03) {\includegraphics[width=0.1478\linewidth]{mangoes5_\i.png}};
}
\ifthenelse{0 = \i \OR 5 = \i}{
\node[frame,anchor=south west] at (\currentCnt+0.03,-1.95) {\includegraphics[width=0.1478\linewidth]{mango\i.png}};}
{
\node[anchor=south west] at (\currentCnt,-1.95-0.03) {\includegraphics[width=0.1478\linewidth]{mango\i.png}};
}
\node at (\currentCnt+0.25,-1.7) {\color{white}{\contour{black}{ \Large{$\mathbf{\i}$}}}};
\pgfmathparse{\currentCnt+1.9}
\xdef\currentCnt{\pgfmathresult}
}
\begin{scope}[shift={(0,-4.)}]
\edef\currentCnt{0}
\foreach \j in  {7,8,9,10,11,12,13} {
\pgfmathparse{int(\j-7)}
\edef\i{\pgfmathresult}
\ifthenelse{10 = \j}{
\node[frame,anchor=south west] at (\currentCnt+0.03,0) {\includegraphics[width=0.1478\linewidth]{mangoes5_\j.png}};}
{
\node[anchor=south west] at (\currentCnt,0-0.03) {\includegraphics[width=0.1478\linewidth]{mangoes5_\j.png}};
}
\ifthenelse{10 = \j}{
\node[frame,anchor=south west] at (\currentCnt+0.03,-1.95) {\includegraphics[width=0.1478\linewidth]{mango\j.png}};}
{
\node[anchor=south west] at (\currentCnt,-1.95-0.03) {\includegraphics[width=0.1478\linewidth]{mango\j.png}};
}
\node at (\currentCnt+0.25,-1.7) {\color{white}{\contour{black}{ \Large{$\mathbf{\j}$}}}};
\pgfmathparse{\currentCnt+1.9}
\xdef\currentCnt{\pgfmathresult}
}
\end{scope}

\begin{scope}[shift={(0,-8.)}]
\edef\currentCnt{0}
\foreach \j in  {14,15,16,17,18,19,20} {
\pgfmathparse{int(\j-14)}
\edef\i{\pgfmathresult}
\ifthenelse{20 = \j \OR 15 = \j}{
\node[frame,anchor=south west] at (\currentCnt+0.03,0) {\includegraphics[width=0.1478\linewidth]{mangoes5_\j.png}};}
{
\node[anchor=south west] at (\currentCnt,0-0.03) {\includegraphics[width=0.1478\linewidth]{mangoes5_\j.png}};
}
\ifthenelse{20 = \j \OR 15 = \j}{
\node[frame,anchor=south west] at (\currentCnt+0.03,-1.95) {\includegraphics[width=0.1478\linewidth]{mango\j.png}};}
{
\node[anchor=south west] at (\currentCnt,-1.95-0.03) {\includegraphics[width=0.1478\linewidth]{mango\j.png}};
}
\node at (\currentCnt+0.25,-1.7) {\color{white}{\contour{black}{ \Large{$\mathbf{\j}$}}}};
\pgfmathparse{\currentCnt+1.9}
\xdef\currentCnt{\pgfmathresult}
}

\end{scope}
\end{scope}
\end{tikzpicture}
}
\caption{Top: A realization of a texture spline with parameters $K=20$, $\delta=0.01$. Bottom: Textures sampled at the interpolated times of the actual video from which the texture constraints were extracted from. 
As in Figure~\ref{fig:texture_3} 
the key frame textures from the spline interpolated path differ from the true images at the corresponding times
as they are samples of the underlying spline probability distributions $\nu^K_k$.}
\label{fig:texture_2}
\end{figure*}

\section{Generative texture synthesis based on spline interpolation of feature distributions}\label{sec:textures}

The flexibility of our model will be tested in this section to generate spline interpolations in the space of textures.
Recently, Houdard \etal \cite{Ho20} proposed GOTEX, a generative model for texture synthesis from a single sample image.
There, the parameters of the generator are chosen such that the distribution of features extracted from the generated textures  
is close in Wasserstein distance to the corresponding empirical feature distribution for the given sample image. In what follows, we shall outline how we leverage our spline interpolation model within the GOTEX framework. 
To this end, we proceed as follows:
\begin{itemize}
\item[-] First, for a vector of feature maps, we compute empirical feature distributions $\overline\nu_i \in \Prob(\R^d)$ 
for all input images $\overline{u}_i$ at times $t_i$ for $i=1,\ldots,I$ and some $d\in \N$.
\item[-] Next, we use the discrete spline approach presented in the preceeding sections to compute a 
discrete spline interpolation $(\nu_k^K)_{k=0,\ldots,K}$ for prescribed distributions $\overline{\nu}_i$ at times $k_i = \overline{t}_i K$ for 
$k_i \in \{0,\ldots, K\}$.
\item[-] Finally, in a post processing, we train a generative texture model to obtain image representations $u_k = g_{\theta_k}(z)$ for the computed  probability distributions $\nu_k^K$, where $\theta_k$ is a set of optimal parameters of a generative neural network $g_{\theta_k}$ applied to a sample $z$ of a regular distribution. 
\end{itemize}
\paragraph{\textit{Extracting empirical feature distributions.}}
Let $I\geq 2$ be fixed, and consider a vector $F=(F_m)_{m=1,\ldots, M}$ of $d$-dimensional 
local feature maps $F_m:\R^N\rightarrow\R^d$ defined on images with $N$ pixels. Each component $F_m$ operates on small pixel neighbourhoods (patches).
Given the images $\overline{u}_i$, $i=1,\ldots,I$ to be spline interpolated the associated empirical feature distributions are 
$$\overline\nu_i\coloneqq  \tfrac1M \sum_{m=1,\ldots, M} \delta_{F_m[\overline u_i]}$$ 
in $\Prob(\R^d)$ for $i=1,\ldots,I$, with $\delta_x$ being the Dirac measure at $x$ in $\R^d$.

\paragraph{\textit{Computing discrete splines in the space of feature distributions.}}
Given the set of feature distributions $\overline\nu_i$ with $i=1,\ldots,I,$ obtained from the first step with associated interpolation times $0\leq\overline t_0<\ldots,\overline t_I\leq 1$ and some $K\in \N$, we compute a discrete spline interpolation $(\nu_k^K)_{k=0,\ldots,K}$ of the prescribed feature distributions $\overline{\nu}_i$ at times $\overline{t}_i$ in $\ProbW(\R^d)$ as described in the previous sections. Here, we constrain the discrete spline to lie in the space of feature distributions, i.e. each distribution $\nu_k^K$ must be represented as the sum of $M$ delta distributions in $\R^d$ with equal weights. To this end, we minimize the fully discrete spline energy functional \eqref{eq:fully_discrete_spline_funct} with respect to the locations $x_m^k$, where $$\nu_k^K=\frac{1}{M}\sum_{m=1}^M\delta_{x_m^k},$$ and keep the distributions 
$\overline{\nu}_i$ at times $k_i = \overline{t}_i K$ for $k_i \in \{0,\ldots, K\}$ fixed.

\paragraph{\textit{Retrieving image representations via a generative texture model based on neural networks.}}
Different samples of a synthesized texture for a given feature distribution $\nu_k^K$ (obtained in the previous step) are regarded as samples of a probability distribution, which is defined as the push-forward of a fixed distribution $\zeta$ defined on a latent space $\mathcal{Z}$, with a generator $g_\theta:\mathcal{Z} \to \R^N$ with parameter vector $\theta$ in a set of admissible parameters $\Theta$. Typically, one may assume $\zeta$ to be the uniform distribution on the space $\mathcal{Z}=[0,1]^N$. Now, one assumes $g$ to be a feed-forward neural network that has been pre-trained on a number of textures.
Hence, one is looking for an optimal parameter vectors $\theta_k \in \Theta$, which minimizes the Wasserstein distance $\Wass(\mu_{\theta_k},\nu_k^K)$ of the resulting feature distribution 
$$\mu_{\theta_k} \coloneqq \tfrac1M \sum_{m=1,\ldots, M} (F_m\circ g_{\theta_k})_\#\zeta$$ 
from the given feature distribution $\nu_k^K$ of the discrete spline. 

The minimization of $\Wass(\mu_{\theta_k},\nu_k^K)$ with respect to the parameter vector $\theta_k$ can be numerically realized via a stochastic gradient descent approach. After obtaining the optimal parameter vectors $\theta_k$ for $k=0,\ldots,K$, one then samples $z\sim\zeta$, and generates the resulting texture spline interpolation $(u_k)_{k=0,\ldots,K}$ as a set of images $$u_k\coloneqq g_{\theta_k}(z)$$
for $k=0,\ldots,K$. 
For different samples $z$ one obtains different images $(u_k)_{k=0,\ldots,K}$ representing the spline interpolation of the textures.

The multi-scale architecture of the generator network $g$ is made up of chains of convolutional, non-linear activation and upsampling layers that take a noise sample $z$ as an input and terminate by producing the final image, cf. \cite{Ul16}. Each convolution block in the generator network contains three convolutional layers followed by a non-linear ReLU activation layer. The convolutional layers contain $3\times 3, 3\times 3$ and $1\times 1$ filters, respectively. Next, nearest-neighbour interpolation is used in the upsampling layers to obtain a tensor with the desired full resolution. For the last step, this tensor is mapped to an RGB-image by a batch of $1\times 1$ filters.

\paragraph{\textit{Numerical results.}}
In Figure~\ref{fig:texture} we compare our spline interpolation method with the alternative metamorphosis spline interpolation (middle) as described in \cite{JuRaRu23} with the same prescribed feature distributions/frames in each case. The key frames consist of a close-up of a leaf (at times $\overline t_1=0, \overline t_3=1$), and a close-up of a cork (at $t_2=\tfrac12$). Clearly, the leaf creases inherited from the first and last key frames are simply blended out in the first method. On the other hand, our approach ensures that the features are interpolated smoothly: Both the boundaries and the surface area covered by the creases change smoothly over time. 

Fig.~\ref{fig:texture_3} shows discrete texture curves resulting 
from a discrete spline interpolation $(\nu_k^K)_k$ for $K=20$ in the space of feature distributions as described above. The texture samples which have been synthesized from the prescribed feature distributions have been framed in red, and the prescribed times are given by $\overline t_i=i/4$, $i=0,\ldots,4$.  
Two different texture realizations $u^1_k\coloneqq g_{\theta_k}(z_1)$, $u^2_k\coloneqq g_{\theta_k}(z_2)$ for $z_1,\, z_2 \sim \zeta$ 
are shown on top of each other for a normalized random distribution $\zeta$ and 
$\theta_k$ minimizing the entropy regularized Wasserstein distance between $\mu_{\theta_k}$ and $\nu_k^K$. The weights of the neural network used to generate these textures are kept unchanged between both interpolations (and along each interpolation between different time frames). Hence, even though for a fixed time step $k$ the spatial arrangement of the texture pattern varies substantially between both samples, it becomes apparent that the texture characteristics 
described by the distribution of features $\nu^K_k$ coincide and vary smoothly along the curve. 

Fig.~\ref{fig:texture_2} serves as a benchmark for our spline interpolation model on how well it can predict the texture patterns in comparison to the ground truth. Therein, the prescribed feature distributions are extracted from equally spaced still frames of a video showing the life cycle of the surface patch of a mango. As the mango peel goes from green to ripe and eventually rots away, not only the colors but the texture of the peel changes significantly. In the frame of generative texture synthesis our method (top rows on each panel) matches both structure and coloring of the actual textures (bottom row on each panel) at corresponding times.
\bibliographystyle{abbrv}     
\bibliography{bibliography}

\end{document}